\newcommand{\kom}[1]{}
\renewcommand{\kom}[1]{{\bf [#1]}}
 \def\1{\raisebox{2pt}{\rm{$\chi$}}}
\newtheorem{theorem}{Theorem}[section]
\newtheorem{lemma}[theorem]{Lemma}
\newtheorem{proposition}[theorem]{Proposition}
\newtheorem{definition}[theorem]{Definition}
\newcommand\blfootnote[1]{%
  \begingroup
  \renewcommand\thefootnote{}\footnote{#1}%
  \addtocounter{footnote}{-1}%
  \endgroup
}
 \def\1{\raisebox{2pt}{\rm{$\chi$}}}
\def\vint_#1{\mathchoice%
          {\mathop{\kern 0.2em\vrule width 0.6em height 0.69678ex depth -0.58065ex
                  \kern -0.8em \intop}\nolimits_{\kern -0.4em#1}}%
          {\mathop{\kern 0.1em\vrule width 0.5em height 0.69678ex depth -0.60387ex
                  \kern -0.6em \intop}\nolimits_{#1}}%
          {\mathop{\kern 0.1em\vrule width 0.5em height 0.69678ex
              depth -0.60387ex
                  \kern -0.6em \intop}\nolimits_{#1}}%
          {\mathop{\kern 0.1em\vrule width 0.5em height 0.69678ex depth -0.60387ex
                  \kern -0.6em \intop}\nolimits_{#1}}}
\def\vintslides_#1{\mathchoice%
          {\mathop{\kern 0.1em\vrule width 0.5em height 0.697ex depth -0.581ex
                  \kern -0.6em \intop}\nolimits_{\kern -0.4em#1}}%
          {\mathop{\kern 0.1em\vrule width 0.3em height 0.697ex depth -0.604ex
                  \kern -0.4em \intop}\nolimits_{#1}}%
          {\mathop{\kern 0.1em\vrule width 0.3em height 0.697ex depth -0.604ex
                  \kern -0.4em \intop}\nolimits_{#1}}%
          {\mathop{\kern 0.1em\vrule width 0.3em height 0.697ex depth -0.604ex
                  \kern -0.4em \intop}\nolimits_{#1}}}
\newcommand{\kint}{\vint}
\newcommand{\aveint}[2]{\mathchoice%
          {\mathop{\kern 0.2em\vrule width 0.6em height 0.69678ex depth -0.58065ex
                  \kern -0.8em \intop}\nolimits_{\kern -0.45em#1}^{#2}}%
          {\mathop{\kern 0.1em\vrule width 0.5em height 0.69678ex depth -0.60387ex
                  \kern -0.6em \intop}\nolimits_{#1}^{#2}}%
          {\mathop{\kern 0.1em\vrule width 0.5em height 0.69678ex depth -0.60387ex
                  \kern -0.6em \intop}\nolimits_{#1}^{#2}}%
          {\mathop{\kern 0.1em\vrule width 0.5em height 0.69678ex depth -0.60387ex
                  \kern -0.6em \intop}\nolimits_{#1}^{#2}}}
\numberwithin{equation}{section}
\definecolor{color1}{rgb}{0.309, 0.43,0.258}
\definecolor{color2}{rgb}{0.741, 0.502,0.743}
\definecolor{color3}{rgb}{0.580, 0.163,0.107}
\definecolor{color0}{rgb}{0.785, 0.333, 0.654}
\begin{document}

\title[]{Regularity for fully nonlinear elliptic equations in generalized Orlicz spaces}

\author[Byun]{Sun-Sig Byun}
\address{Department of Mathematical Sciences, Seoul National University,
Seoul 08826, Republic of Korea}
\email{byun@snu.ac.kr}

\author[Han]{Jeongmin Han}
\address{Department of Mathematics, Soongsil University,
Seoul 06978, Republic of Korea}
\email{jeongmin.han@ssu.ac.kr}

\author[Lee]{Mikyoung Lee}
\address{Department of Mathematics and Institute of Mathematical Science, Pusan National University,
Busan 46241, Republic of Korea}
\email{mikyounglee@pusan.ac.kr}

\blfootnote{S.-S. Byun was supported by the National Research Foundation of Korea grant (NRF-2022R1A2C1009312) funded by the Korean Government. J. Han was supported by the National Research Foundation of Korea grant (NRF-2019R1F1A1061295) funded by the Korean Government.}

\keywords{Partial differential equations, Generalized Orlicz space, Calder\'{o}n-Zygmund type estimate, Asymptotically convexity, Viscosity solutions.}
\subjclass[2020]{Primary: 35J15, 35J25; Secondary: 35K20.}

\begin{abstract}
In this paper, we establish an optimal global Calder\'{o}n-Zygmund type estimate for the viscosity
solution to the Dirichlet boundary problem of fully nonlinear elliptic equations with possibly nonconvex nonlinearities. 
We prove that the Hessian of the solution is as integrable as the nonhomogeneous term in the
setting of a given generalized Orlicz space even when the
nonlinearity is asymptotically convex with respect to the Hessian of
the solution.
\end{abstract}

\maketitle


\section{Introduction}

This paper studies the following fully nonlinear elliptic problem
with the zero boundary condition
\begin{align}\label{ob_acori}
\left\{ \begin{array}{ll}
F(D^{2}u,Du,u,x) = f & \textrm{in $\Omega, $}\\
u=0 & \textrm{on $\partial \Omega$,}
\end{array} \right.
\end{align}
where $\Omega \subset \mathbb{R}^{n}$ is a $C^{1,1}$-domain. Here we
aim at deriving an optimal global Calder\'{o}n-Zygmund theory for
the problem \eqref{ob_acori} when the nonlinearity $F=F(X,\xi,z,x)$
is not necessarily convex with respect to the $X$ variable for the
Hessian $D^2u(x)$ of the solution by identifying minimal conditions
to impose on the nonlinearity in the setting of a given generalized
Orlicz space. More precisely speaking, we ask what are reasonable
requirements on the nonlinearity $F=F(X,\xi,z,x)$, the boundary of
the domain $\partial \Omega$ and the generalized Orlicz function
$\varphi=\varphi(t,x)$ under which the following implication
$$ \varphi(x, |f(x)|) \in L^1(\Omega) \Longrightarrow \varphi(x,
|D^2u(x)|) \in L^1(\Omega)$$ holds for every $\varphi$. This is
so-called an optimal global $W^{2, \varphi(\cdot)}$-regularity
theory, as it can be reduced to a global $W^{2,p}$-regularity for
the classical case when $\varphi(x,t)=t^p$ for $1<p<\infty$.
$W^{2,p}$-regularity is the fundamental type of regularity in
Calder\'{o}n-Zygmund theory. For this, we refer to \cite{MR1088476,
MR1191890, MR3335124, MR3479199} and references therein. 
In \cite{MR4046185}, the authors studied $W^{2,p}$-regularity for oblique derivative boundary problems when
$F$ is convex with respect to $X$-variable under the setting of the
classical Lebesgue spaces. The present work can be regarded as a
natural outgrowth and improvement of the one in \cite{MR3957152, MR4258789}, as
we allow $F$ to be merely nonconvex in $X$-variable provided the
Hessian of a solution is under control in the asymptotic sense while
involving a much wider class of function spaces with highly
nonstandard growth including Orlicz growth, variable exponent growth
and double phase growth. In the next section we will give a precise
description of the underlying function space and the detailed
assumptions on the nonlinearity to state the main result.

To establish the regularity of the Hessian of solutions for \eqref{ob_acori}, we
observe $C^{1,1}$-regularity of the corresponding reference problem
as we are able to use the convexity of the  limiting nonlinearity
and localize the problem near the flat boundary in order to apply
the regularity previously established in, for instances,
\cite{MR1351007, MR3780142}). Indeed Calder\'{o}n-Zygmund type
estimates for fully nonlinear equations have been obtained under a
certainly weakened convexity condition for $F$ in a more restricted
problem in the recent papers including \cite{MR3040680, MR3479199,
MR3539473, MR3957152, MR4258789}. The approach used there is based
on the observation that the higher integrability of the Hessian of a
solution is connected to the case when $|D^{2}u|$ is sufficiently
large. Meanwhile we consider a solution of \eqref{ob_acori} in
generalized Orlicz spaces. A generalized Orlicz space can be
understood as an Orlicz space with the variable exponent growth.
This function space includes not only fundamental spaces like
$L^{p}$-space, but also a class of functions having nonstandard
growth such as double phase phenomena. One can find various and
interesting examples in \cite{MR3931352}. The regularity theory of
partial differential equations has developed a lot to the setting of
this function space, as we can find very interesting papers
\cite{MR3165506, MR3418078, MR3811530, MR3592600, MR3360438,
MR3732884}. The classical method via Calder\'{o}n-Zygmund
decomposition and maximal function has been used to obtain
regularity results under various settings. We refer to
\cite{MR3900364} in variable exponent spaces and \cite{MR3957152} in
Orlicz spaces, respectively. On the other hand it seems very
difficult to employ this argument to the function space having both
variable exponent growth and Orlicz growth at the same time as in
our case. To overcome this difficulty, we use the approach
introduced by Acerbi and Mingione in \cite{MR2286632}. This
methodology employs a stopping time argument (see \cite{MR2352517}
for its origin) and a Vitali type of covering lemma. The key
ingredient is an approximation lemma, Lemma \ref{thetaest}, which
gives a $W^{2,\nu}$-type estimate for the difference between a
solution of the original problem and that of its limiting one. The
advantage of this approach enables us to consider more general
function spaces, since belonging to class $S$ (see Section 2) is the
only needed condition to obtain $W^{2,\nu}$-type estimates.

This paper is organized as follows. In Section 2 we introduce basic
notions and background knowledge to state the main result, Theorem
\ref{gor_main}. The main part of this paper is in Section 3 in which
we show boundary Hessian regularity results. Section 4 is devoted to
presenting the gradient estimate and proving the main theorem.

\section{Preliminaries}

\subsection{Notations}
\begin{itemize}
\item For $x= (x_{1}, \dots, x_{n-1},x_{n} ) \in \mathbb{R}^{n} $, we write $x'=(x_{1},  \dots, x_{n-1} )$.
\item $Sym(n)$ is the set of $n \times n $ symmetric matrices and $||X||=\sup_{|x| \le 1} |Xx|$ for every $ X\in Sym(n)$.
\item $\mathbb{R}_{+}^{n} : = \{ x \in \mathbb{R}^{n}: x_{n} > 0 \} $.
\item For $x_{0} \in  \mathbb{R}^{n} $ and $r>0$, $B_{r}(x_{0}):=\{ x \in  \mathbb{R}^{n} :|x-x_{0}|<r \}  $. We write $B_{r}=B_{r}(0)$, $B_{r}^{+}=B_{r} \cap \mathbb{R}_{+}^{n}$ and $B_{r}^{+}(x_{0})=B_{r}(x_{0}) \cap \mathbb{R}_{+}^{n}$.
\item We write $T_{r} = B_{r} \cap \{ x \in \mathbb{R}^{n} : x_{n}=0 \} $ and $T_{r}(x_{0}') := T_{r} + x_{0}' $ for $ x_{0}' \in \mathbb{R}^{n-1} $.
\item For any measurable set $A$ with $|A| \neq 0 $ and measurable function $f$, we write
$$ \kint_{A} f dx = \frac{1}{|A|} \int_{A} f dx. $$
\item For a locally integrable function $f:\mathbb{R}^n\to\mathbb{R}$, the (Hardy-Littlewood) maximal operator $\mathcal{M}$ is defined by 
$$\mathcal{M}(f)(x)=\sup_{r>0}\kint_{B_r(x)}|f(y)|dy $$
for $x\in \mathbb{R}$.
\end{itemize}
\subsection{Backgrounds}
In this subsection, we present some definitions and basic concepts to be used throughout this paper.

We first introduce a generalized Orlicz space (or Musielak-Orlicz space), the main function space we are considering.
To do this, we now recall here the notion of a weak $\Phi$-function.
A function $\varphi:[0, \infty) \times \Omega \to [0,\infty)$ is called a weak $\Phi$-function
if $\varphi=\varphi (t,x) $ is measurable in the $x$-variable, and increasing and left-continuous in the $t$-variable,
and satisfies that
$\varphi(0,x)=0$,
$\lim_{t \to \infty} \varphi (t,x)= \infty $
and the condition $\textrm{(aInc)}_{1}$ (see below).

We further need some structure assumptions on $\varphi$ to derive our regularity results for the problem \eqref{ob_acori} as we state now.
We write
$$\varphi_{U}^{+}(t)=\sup_{x\in U}\varphi(t,x) \quad \textrm{and} \quad \varphi_{U}^{-}(t)=\inf_{x\in U}\varphi(t,x) $$
for  any $U \subset \Omega$. For convenience, we write $ \varphi_{\Omega}^{\pm}=\varphi^{\pm}$.
The following conditions are assumed throughout this paper.
\begin{itemize}
\item[(A0)] There exists $t_{0} \in (0,1)$ such that $\varphi^{+}(t_{0}) \le 1 \le \varphi^{-}(1/t_{0})$.
\item[$\textrm{(aInc)}_{p}$] There exists $L \ge 1$ such that the map $(0,\infty) \ni t \mapsto \varphi (t,x)/ t^{p}$ is $L$-almost increasing  (i.e., $\frac{\varphi(t,x)}{t^p}\le L\frac{\varphi(s,x)}{s^p}$ for $0<t\le s$) for every $x\ \in \Omega$.
\item[$\textrm{(aDec)}_{q}$] There exists $L \ge 1$ such that the map $(0,\infty) \ni t \mapsto \varphi (t,x) / t^{q}$ is $L$-almost decreasing (i.e., $\frac{\varphi(t,x)}{t^q}\ge L\frac{\varphi(s,x)}{s^q}$ for $0<t\le s$) for every $x\ \in \Omega$.
\item[(A1-$\varphi^{-}$)] There exists $t_{0} \in (0,1)$ such that for every ball $B \subset \Omega$
$$\varphi_{B}^{+}(t_{0} t) \le \varphi_{B}^{-}(t)  \quad \textrm{for any} \ t \in [1, (\varphi_{\Omega}^{-})^{-1}(|B|^{-1})] .$$
\end{itemize}

We next return to function spaces associated with a weak $\Phi$-function $\varphi$ satisfying the structure conditions mentioned above.
Let $L^{0}(\Omega)$ be the space of measurable functions $v: \Omega \to \mathbb{R}$.
We define
$$L^{\varphi(\cdot)}(\Omega)=\{ v \in L^{0}(\Omega):  \lim_{\lambda \to 0} \rho_{\varphi(\cdot)}(\lambda v)=0  \}, $$
where $\rho_{\varphi(\cdot)}$ is defined by
$$\rho_{\varphi(\cdot)}( v)= \int_{\Omega}\varphi(|v|,x) dx$$
with
$$||v||_{L^{\varphi(\cdot)}(\Omega)}=\inf \{ \lambda >0: \rho_{\varphi(\cdot)}(v/\lambda ) \le 1 \} .$$
If $\varphi$ is independent of the $x$-variable, we write $L^{\varphi(\cdot)}(\Omega)=L^{\varphi}(\Omega)$.


The generalized Orlicz-Sobolev space $W^{2,\varphi(\cdot)}(\Omega)$ is defined by
$$ W^{2,\varphi(\cdot)}(\Omega)=\big\{ v\in W^{2,1}(\Omega) : \sum_{0\le|\alpha|\le 2}||v ||_{W^{2,\varphi(\cdot)}(\Omega)} :=||D^{\alpha}v||_{L^{\varphi(\cdot)}(\Omega)}< \infty \big\},$$
where
$ W^{2,1}(\Omega) $ is the usual standard Sobolev space.

We will use a Jensen type inequality to obtain our estimates. The proof can be found in \cite[Lemma 2.3]{MR3953020}.
\begin{lemma} \label{orljen}
Let $\varphi$ be a weak $\Phi$-function satisfying $\textrm{(aInc)}_{p}$ for some $p \ge 1$.
Then there exists $C=C(p,L)>0$ such that
$$ \varphi \bigg(C \bigg(\kint_{\Omega}|f|^{p} dx \bigg)^{\frac{1}{p}} \bigg)  \le \kint_{\Omega} \varphi(|f|)dx.$$
\end{lemma}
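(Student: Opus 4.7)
The plan is to reduce the Jensen-type inequality to two applications of condition $(aInc)_p$: one used as a lower bound on $\varphi(|f|)$ on the superlevel set where $|f|$ is large, the other to transfer the resulting estimate to $\varphi(CM)$, where $M := \big(\kint_\Omega |f|^p \, dx\big)^{1/p}$. We may assume $M>0$, since otherwise $f = 0$ a.e. and both sides vanish.

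First I would fix a threshold $\lambda \in (0,1)$ and note that on the sublevel set $\{|f| \le \lambda M\}$ the contribution to $\int_\Omega |f|^p$ is at most $\lambda^p M^p |\Omega|$; subtracting this from the total gives
$$\int_{\{|f| > \lambda M\}} |f|^p \, dx \ge (1 - \lambda^p) M^p |\Omega|.$$

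Next, $(aInc)_p$ applied with $t = \lambda M$ and $s = |f(x)|$ yields $\varphi(|f(x)|) \ge L^{-1} \varphi(\lambda M) (\lambda M)^{-p} |f(x)|^p$ pointwise on $\{|f| > \lambda M\}$. Integrating this over the superlevel set, discarding the sublevel contribution, and using the mass bound from the previous step, I obtain
$$\kint_\Omega \varphi(|f|) \, dx \ge \frac{1 - \lambda^p}{L \lambda^p} \, \varphi(\lambda M).$$

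Finally I would apply $(aInc)_p$ in the opposite direction: for any $0 < C \le \lambda$ one has $\varphi(CM) \le L (C/\lambda)^p \, \varphi(\lambda M)$. Choosing, for instance, $\lambda = 1/2$ and then picking $C = C(p,L) > 0$ small enough that $L(C/\lambda)^p \le (1-\lambda^p)/(L\lambda^p)$ concludes with $\varphi(CM) \le \kint_\Omega \varphi(|f|) \, dx$. There is no real analytic obstacle here; $(aInc)_p$ is tailor-made as a one-sided convexity substitute for exactly this layer-cake argument, and the only care needed is the quantitative bookkeeping to ensure that the chosen $C$ depends solely on $p$ and $L$, which is automatic from the explicit formula.
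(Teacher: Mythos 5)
Your proof is correct, and it follows the standard superlevel-set (layer-cake) strategy that the paper delegates to \cite[Lemma 2.3]{MR3953020}: split $\Omega$ at a threshold $\lambda M$, use $\textrm{(aInc)}_p$ to bound $\varphi(|f|)$ from below by a multiple of $|f|^p$ on the superlevel set, estimate the remaining mass, and then use $\textrm{(aInc)}_p$ once more to compare $\varphi(\lambda M)$ with $\varphi(CM)$; the only caveat to record is that $C$ must also be taken $\le\lambda$, which your ``small enough'' already absorbs, and that the trivial cases $M=0$ and $M=\infty$ (the latter by $\varphi(s)\gtrsim s^p$ for $s$ bounded away from $0$) pose no issue.
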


The following proposition gives some properties of the inverse function of a weak $\Phi$-function (see \cite[Proposition 2.3.13]{MR3931352}).
\begin{proposition}\label{phiftninv}If $\varphi$ is a weak $\Phi$-function, then $\varphi^{-1}$ is increasing, left-continuous,
and satisfies $\textrm{(aDec)}_{1}$ and
$$\varphi^{-1}(0,x)=0  \quad \textrm{and} \quad
\lim_{t \to \infty} \varphi^{-1} (t,x)= \infty .$$
\end{proposition}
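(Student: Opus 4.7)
The plan is to work directly from the left-continuous generalized inverse
\[
\varphi^{-1}(\tau,x) := \inf\{ t \ge 0 : \varphi(t,x) > \tau \},
\]
which is the only sensible definition given that a weak $\Phi$-function need be neither strictly increasing nor continuous. The easy properties follow essentially from general facts about such inverses, so I would dispatch them first and reserve the bulk of the argument for $(\textrm{aDec})_1$.

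First I would verify monotonicity and left-continuity. Monotonicity is immediate since enlarging $\tau$ can only enlarge the underlying set of $t$'s. For left-continuity, if $\tau_k \nearrow \tau$ then the corresponding sets $\{t:\varphi(t,x)>\tau_k\}$ are nested and decrease to $\{t:\varphi(t,x)>\tau\}$ (because $\varphi(t,x)>\tau$ iff $\varphi(t,x)>\tau_k$ for all large $k$), so the infima converge. The boundary values $\varphi^{-1}(0,x)=0$ and $\lim_{\tau\to\infty}\varphi^{-1}(\tau,x)=\infty$ are then read off from $\varphi(0,x)=0$ together with $\varphi(t,x)\to\infty$ as $t\to\infty$: in the first case the set over which we take the infimum reaches down to $0$, and in the second, for any $M>0$ the inequality $\varphi^{-1}(\tau,x)\le M$ forces $\tau\le\varphi(M,x)$, so $\varphi^{-1}(\tau,x)$ must exceed $M$ eventually.

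The substantive step is $(\textrm{aDec})_1$ for $\varphi^{-1}$. Here I would exploit the duality between $(\textrm{aInc})_1$ for $\varphi$ and $(\textrm{aDec})_1$ for its generalized inverse. Fix $0<\sigma\le\tau$ and set $s:=\varphi^{-1}(\sigma,x)$, $t:=\varphi^{-1}(\tau,x)$, so that $s\le t$ by monotonicity. By the definition of the inverse and left-continuity of $\varphi$ one has $\varphi(s,x)\le\sigma$ and (by approximation from the right) $\varphi(t-\eps,x)\le\tau$ for every small $\eps>0$. Applying $(\textrm{aInc})_1$ to the pair $s\le t-\eps$ yields
\[
\frac{\varphi(s,x)}{s}\ \le\ L\,\frac{\varphi(t-\eps,x)}{t-\eps}\ \le\ L\,\frac{\tau}{t-\eps}.
\]
Rearranging gives $\varphi^{-1}(\tau,x)-\eps \le L\,\tau\,s/\varphi(s,x)$, and combining with $\varphi(s,x)\le\sigma$ yields $(t-\eps)/\tau \le L\,s/\sigma$; letting $\eps\to 0$ produces
\[
\frac{\varphi^{-1}(\tau,x)}{\tau}\ \le\ L\,\frac{\varphi^{-1}(\sigma,x)}{\sigma},
\]
which is precisely $(\textrm{aDec})_1$ with the same constant $L$. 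The slightly delicate point, and the one I expect to be the main obstacle, is this handling of possible flat pieces or jumps of $\varphi$: one must use both $\varphi(\varphi^{-1}(\tau,x),x)\le\tau$ and an approximation on the right to transfer $(\textrm{aInc})_1$ across the inverse without assuming strict monotonicity or continuity. Once this is set up, no further work is needed.
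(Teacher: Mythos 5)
The paper itself gives no proof of this proposition; it simply cites \cite[Proposition 2.3.13]{MR3931352}, so I evaluate your argument on its own. There are two genuine problems.

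First, your definition $\varphi^{-1}(\tau,x)=\inf\{t\ge 0:\varphi(t,x)>\tau\}$ is the \emph{right}-continuous generalized inverse; the left-continuous one used in \cite{MR3931352} is $\inf\{t\ge 0:\varphi(t,x)\ge\tau\}$. Your own left-continuity argument reveals the issue: the parenthetical ``$\varphi(t,x)>\tau$ iff $\varphi(t,x)>\tau_k$ for all large $k$'' fails in the backward direction, since $\varphi(t,x)>\tau_k$ for all $k$ only gives $\varphi(t,x)\ge\tau$. Thus $\bigcap_k\{t:\varphi(t,x)>\tau_k\}=\{t:\varphi(t,x)\ge\tau\}$, which is strictly larger than $\{t:\varphi(t,x)>\tau\}$ whenever $\varphi(\cdot,x)$ is flat at level $\tau$, and in that case your inverse has a left jump at $\tau$. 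Relatedly, if $\varphi(\cdot,x)$ vanishes on $[0,a]$ for some $a>0$ (which $\textrm{(aInc)}_1$ permits for a weak $\Phi$-function), your definition gives $\varphi^{-1}(0,x)=a\neq 0$. Replacing $>$ by $\ge$ fixes both points, and then your nested-sets argument for left-continuity does go through.

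Second, and more seriously, the $\textrm{(aDec)}_1$ derivation has an inequality pointing the wrong way. You obtain $t-\eps\le L\tau s/\varphi(s,x)$ and then claim that ``combining with $\varphi(s,x)\le\sigma$'' yields $(t-\eps)/\tau\le Ls/\sigma$. But $\varphi(s,x)\le\sigma$ gives $s/\varphi(s,x)\ge s/\sigma$, i.e.\ it bounds $L\tau s/\varphi(s,x)$ from \emph{below}, so no conclusion follows. What you actually need is the complementary half of the definition of the inverse: for every $\eps>0$ one has $\varphi(s+\eps,x)\ge\sigma$ (a lower bound just to the right of $s=\varphi^{-1}(\sigma,x)$), while left-continuity of $\varphi$ gives $\varphi(t,x)\le\tau$ at $t=\varphi^{-1}(\tau,x)$. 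Then, assuming $s<t$ and choosing $\eps$ with $s+\eps\le t$, $\textrm{(aInc)}_1$ applied to the pair $s+\eps\le t$ yields
\[
\frac{\sigma}{s+\eps}\le\frac{\varphi(s+\eps,x)}{s+\eps}\le L\,\frac{\varphi(t,x)}{t}\le L\,\frac{\tau}{t},
\]
so $t/\tau\le L(s+\eps)/\sigma$, and letting $\eps\to 0$ gives the claim; the case $s=t$ is trivial since $\sigma\le\tau$ and $L\ge1$. The conceptual point you missed is that transferring $\textrm{(aInc)}_1$ across the inverse requires one lower and one upper bound on $\varphi$, whereas your write-up uses two upper bounds ($\varphi(s,x)\le\sigma$ and $\varphi(t-\eps,x)\le\tau$).
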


Next, we introduce a weak concept of second order derivatives. This notion plays an important role in establishing our Hessian estimates for the problem \eqref{ob_acori}.

Let $V \subset \Omega $, $M>0 $ and $u \in C(\Omega) $.
We define
\begin{align*}
 \underline{\mathcal{G}}_{M}(u, V) \hspace{-0.2em}=\hspace{-0.2em} \left\{ \hspace{-0.2em} x_{0} \in V : \hspace{-0.2em}  \begin{array}{ll}
 \textrm{there is a concave paraboloid $P$ with $|D^{2}P|=M$}\\
\textrm{such that $P(x_{0})=u(x_{0})$ and}\\
\textrm{$P(x)\le u(x)$ for any $x \in V$}
\end{array} \hspace{-0.2em}  \right\}
\end{align*}and
\begin{align*}
\underline{\mathcal{A}}_{M}(u, V) = V \backslash \underline{\mathcal{G}}_{M}(u, V) .
\end{align*}
Likewise, $ \overline{\mathcal{G}}_{M}(u, V)$ and $\overline{\mathcal{A}}_{M}(u, V) $ can be defined with convex paraboloids.
And we set
\begin{align*}&\mathcal{G}_{M}(u,V) =  \underline{\mathcal{G}}_{M}(u, V)  \cap \overline{\mathcal{G}}_{M}(u, V), \\
& \mathcal{A}_{M}(u,V)  =  \underline{\mathcal{A}}_{M}(u, V)  \cup \overline{\mathcal{A}}_{M}(u, V).
\end{align*}
Then we consider the following functions
\begin{align*} &\underline{\Theta}(u,V)(x) = \inf \{ M>0 :x \in \underline{\mathcal{G}}_{M}(V) \},
\\ & \overline{\Theta}(u,V)(x) = \inf \{ M>0 :x \in \overline{\mathcal{G}}_{M}(V) \},
\end{align*}
and
$$\Theta (u,V)(x) = \sup \{ \underline{\Theta}(u,V)(x) ,\overline{\Theta}(u,V)(x) \}. $$

We finish this subsection with introducing Pucci extremal operator and the class $S$.
\begin{definition}[Pucci extremal operator]
For any $X \in Sym(n) $, the Pucci extremal operators $ \mathcal{M}^{+} $ and  $\mathcal{M}^{-}$ are defined as
$$  \mathcal{M}^{+}(\lambda, \Lambda, X)=\Lambda \sum_{e_{i}>0} e_{i} + \lambda \sum_{e_{i}<0} e_{i} \ \textrm{and}\  \mathcal{M}^{-}(\lambda, \Lambda, X)=\lambda \sum_{e_{i}>0} e_{i} + \Lambda \sum_{e_{i}<0} e_{i} $$
where $ e_{i}$ ($1 \le i \le n $) are eigenvalues of $X$.
\end{definition}

\begin{definition}
Let $0 < \lambda \le \Lambda $ and $f \in L^{n}(\Omega) $.
We define the classes $\underline{S}(\lambda, \Lambda, f) $ ($ \overline{S}(\lambda, \Lambda,  f), respectively$) to be the set of all continuous functions $u$ that satisfy $ \mathcal{M}^{+}u \ge f$($ \mathcal{M}^{-}u \le f$, respectively) in the viscosity sense (see Definition \ref{defvs}) in $\Omega$.
We also write
$$S(\lambda, \Lambda,  f) =\overline{S}(\lambda, \Lambda, f) \cap \underline{S} (\lambda, \Lambda,  f)  $$
and
$$S^{\ast}(\lambda, \Lambda,  f) =\overline{S}(\lambda, \Lambda,| f|) \cap \underline{S} (\lambda, \Lambda,  -|f|) . $$
\end{definition}

\subsection{Main result}
Throughout this paper, we assume that $F=F(X,\xi,z,x)$ satisfies the following structure conditions:
there are some constants $\lambda, \Lambda, b,c>0$ with
$\lambda \le \Lambda$ such that
\begin{align} \label{ob_sc} \begin{split}
& \mathcal{M}^{-} (\lambda, \Lambda, X_{1}-X_{2}) -b|\xi_{1}-\xi_{2}| - c|z_{1}-z_{2}| \\ &
\qquad \le F(X_{1},\xi_{1},z_{1},x) - F(X_{2},\xi_{2},z_{2},x) \\ &
\qquad \qquad \le \mathcal{M}^{+} (\lambda, \Lambda, X_{1}-X_{2}) +b|\xi_{1}-\xi_{2}| + c|z_{1}-z_{2}|
\end{split}
\end{align}
for any $X_{1},X_{2} \in Sym(n)$, $\xi_{1},\xi_{2} \in \mathbb{R}^{n}$, $z_{1},z_{2} \in \mathbb{R}$ and $x \in \mathbb{R}^{n}$.
We further assume that
\begin{align} \label{foruni}
d(z_{2}-z_{1}) \le F(X,\xi,z_{1},x)-F(X,\xi,z_{2},x)
\end{align}
for some $d>0$ and any $X\in Sym(n)$, $\xi \in \mathbb{R}^{n}$, $z_{1},z_{2} \in \mathbb{R}$ with $z_{1}\le z_{2}$ and $x \in  \mathbb{R}^{n}$.

We introduce the notion of the viscosity solution here. 
\begin{definition}\label{defvs} Let $F=F(X,\xi,s,x)$ be continuous in $X,\xi,s$ and measurable in $x$.
Suppose that $f \in L^{n}(\Omega )\cap C(\overline{\Omega})$.
A continuous function $u $ is called a ($C$-)viscosity solution of \eqref{ob_acori} if the following conditions hold:
\begin{itemize}
\item[(a)] for all $ \varphi \in C^{2}(\Omega) $ touching $u$ by above at $x_{0} \in \Omega$, $$F( D^{2} \varphi (x_{0}), D \varphi (x_{0}),  u(x_{0}),  x_{0}) \ge f(x_{0}) ,$$ 
\item[(b)] for all $ \varphi \in C^{2}(\Omega) $ touching $u$ by below at $x_{0} \in \Omega$, $$F( D^{2} \varphi (x_{0}), D \varphi (x_{0}),  u(x_{0}) ,x_{0}) \le f(x_{0}) .$$ 
\end{itemize}
\end{definition}
Note that the assumptions \eqref{ob_sc} and \eqref{foruni} ensure the existence and uniqueness of a viscosity solution to the problem \eqref{ob_acori} (see \cite[Theorem 4.6]{MR2486925}).

The recession operator $F^{\star}$ associated with $F$ is defined by
\begin{align} \label{fstar}F^{\star}(X,\xi,z,x):=\lim_{\mu \to 0^{+}} F_{\mu}(X,\xi,z,x)
\end{align}
if it exists, where $ F_{\mu}(X,\xi,z,x):=  \mu F( \mu^{-1}X,\xi, z,x) $.



We define a function $\sigma_{F}$ for $F$ as follows:
$$ \sigma_{F}(x,x_{0}) := \sup_{X \in Sym(n)} \frac{|F(X,0,0,x)-F(X,0,0,x_{0})|}{||X||+1}$$
for any $x, x_{0} \in \mathbb{R}^{n}$.

Now we state our main result.
\begin{theorem}\label{gor_main} 
Let $\Omega \subset \mathbb{R}^n$ with the $C^{1,1}$-boundary and $F=F(X,\xi,z,x)$ satisfy \eqref{ob_sc} and \eqref{foruni}  with $F(0,0,0,\cdot)=0$.
Assume that there exists $F^{\star}$ satisfying \eqref{fstar} which is convex in $X$.
Moreover, let $\varphi$ be a weak $\Phi$-function satisfying (A0), (A1-$\varphi^{-}$), $\textrm{(aInc)}_{p}$ and $\textrm{(aDec)}_{q}$, and set $\psi(t,x)=\varphi(t^{n},x)$ for some $1<p\le q<\infty$.

Suppose $f \in  L^{\psi(\cdot)}(\Omega)$.
Then there exists a small $\delta=\delta(n,\lambda,\Lambda,p,q,L,t_{0})>0$ such that
if for any $r>0$ and $x_0\in \Omega$,
$$\bigg( \kint_{B_{r}(x_{0})\cap \Omega} \sigma_{F^{\star}}(x_{0},x)^{n} dx  \bigg)^{\frac{1}{n}} \le \delta, $$
then the unique viscosity solution $u $ to \eqref{ob_acori} satisfies
$u,Du,D^{2}u \in  L^{\psi(\cdot)}(\Omega)$
with the estimate
\begin{align}\label{est_main}
    ||u||_{ W^{2, \psi(\cdot)}( \Omega)} \le C||f||_{ L^{\psi(\cdot)}(\Omega)}
\end{align}
for some $C=C(n,\lambda,\Lambda,p,q,L,t_{0},b,c,\Omega)>0$.
\end{theorem}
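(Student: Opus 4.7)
My plan is to follow the Acerbi--Mingione strategy flagged in the introduction: instead of a Calder\'on--Zygmund cube decomposition (not available in a genuine mixed variable-exponent/Orlicz setting), I would combine the approximation Lemma \ref{thetaest} with a stopping-time/Vitali covering argument and then integrate the resulting level-set bound against $\psi$, to pass from a nonstandard Lebesgue-type estimate to the generalized Orlicz norm.

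First I reduce the Hessian bound to an estimate on $\Theta(u,\Omega)$: on $\mathcal{G}_{M}(u,\Omega)$ the solution $u$ is twice differentiable with $|D^{2}u|\le C\Theta(u,\Omega)$ a.e., so it suffices to prove $\|\Theta(u,\Omega)\|_{L^{\psi(\cdot)}(\Omega)}\le C\|f\|_{L^{\psi(\cdot)}(\Omega)}$. Using Lemma \ref{thetaest} (after flattening the boundary), I can compare $u$ with a solution of the reference problem associated with the convex recession $F^{\star}$; known $C^{1,1}$-bounds for that reference problem convert, by rescaling, into a density/good-$\lambda$ inequality: for every $\eps>0$ there exist $N$ large and $\delta$ small such that, whenever $B_{r}(x_{0})$ is an admissible ball and $|\{\Theta(u,\Omega)>N\}\cap B_{r}|>\eps|B_{r}|$, the ball must meet $\{\mathcal{M}(\Theta(u,\Omega)^{\nu})>1\}\cup\{\mathcal{M}(|f|^{n})>\delta^{n}\}$ for some $\nu>1$. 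Iterating this through a Vitali covering yields the power decay
$$|\{\Theta(u,\Omega)>N^{k}\}|\le\eps^{k}|\Omega|+\sum_{j=1}^{k}\eps^{k-j}\,|\{\mathcal{M}(|f|^{n})>(\delta N^{j})^{n}\}|.$$

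Next I would translate this distributional bound into the $L^{\psi(\cdot)}$ estimate. Writing $\int_{\Omega}\psi(\Theta,x)\,dx$ as a dyadic sum over levels $N^{k}$, condition $\textrm{(aDec)}_{q}$ gives $\psi(N^{k},x)\le LN^{qk}\psi(1,x)$, which is dominated by $\eps^{k}$ once $\eps$ is chosen small enough (the initial averaged term is handled through Lemma \ref{orljen}). The residual maximal-function contribution is precisely the reason for the definition $\psi(t,x)=\varphi(t^{n},x)$, since
$$\int_{\Omega}\psi\bigl(\mathcal{M}(|f|^{n})^{1/n},x\bigr)\,dx=\int_{\Omega}\varphi\bigl(\mathcal{M}(|f|^{n}),x\bigr)\,dx;$$
under (A0), (A1-$\varphi^{-}$), $\textrm{(aInc)}_{p}$ and $\textrm{(aDec)}_{q}$ the Hardy--Littlewood maximal operator is bounded on $L^{\varphi(\cdot)}(\Omega)$, so this is controlled by $\int_{\Omega}\varphi(|f|^{n},x)\,dx=\int_{\Omega}\psi(|f|,x)\,dx$. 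Combining this Hessian bound with the Section 4 gradient estimate and a Poincar\'e-type inequality on $L^{\psi(\cdot)}$ (using the zero boundary data $u|_{\partial\Omega}=0$) then closes \eqref{est_main}.

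The main obstacle will be the density step: making the comparison with the reference problem quantitative up to the $C^{1,1}$-boundary \emph{when $F$ is only asymptotically convex}, not convex. The argument must split into the regime $\|D^{2}u\|\le M$, where the absence of convexity of $F$ is harmless because $\Theta$ is already bounded, and $\|D^{2}u\|>M$, where the rescaling $F_{\mu}\to F^{\star}$ restores convexity; the stopping-time construction must be compatible both with the boundary flattening and with the smallness assumption on $\sigma_{F^{\star}}(x_{0},\cdot)$. Ensuring that the decay factor $\eps$ produced by Lemma \ref{thetaest} can be chosen \emph{independently} of $q$ (hence of $\psi$) is the technical heart of the argument, as this is what permits the single $\delta=\delta(n,\lambda,\Lambda,p,q,L,t_{0})$ in the theorem to serve every admissible weak $\Phi$-function $\varphi$.
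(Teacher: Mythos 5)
Your plan correctly identifies the high-level architecture (reduce to $\Theta$, flatten, approximate by the convex recession problem, cover, recombine, add the gradient estimate), but the covering/iteration scheme you sketch for the Hessian estimate is the \emph{classical} Caffarelli good-$\lambda$/maximal-function route, which is precisely the route the paper explicitly abandons because it does not carry over cleanly to a space with simultaneous $x$-dependence and Orlicz growth. This is not a cosmetic difference: it leaves a genuine gap.

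Concretely, once you have the iterated measure decay
\[
|\{\Theta>N^{k}\}|\le\eps^{k}|\Omega|+\sum_{j=1}^{k}\eps^{k-j}\,|\{\mathcal{M}(|f|^{n})>(\delta N^{j})^{n}\}|,
\]
you propose to convert this into $\int_\Omega\psi(\Theta,x)\,dx<\infty$ by the pointwise bound $\psi(N^{k},x)\le L\,N^{nqk}\psi(1,x)$ (note the exponent must be $nq$, not $q$, since $\psi(t,x)=\varphi(t^{n},x)$ satisfies $\textrm{(aDec)}_{nq}$). After summing over $k$ this forces you to control a double sum of the form $\sum_{j}N^{nqj}\,|\{\mathcal{M}(|f|^{n})>(\delta N^{j})^{n}\}|$; but by $\textrm{(aInc)}_{p}$ (and (A0)) the Chebyshev bound on that measure only carries a factor $(\delta N^{j})^{-np}$, so the surviving factor $N^{n(q-p)j}$ diverges whenever $p<q$. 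This mismatch between the (aInc) exponent $p$ controlling $f$ and the (aDec) exponent $q$ controlling $\Theta$ is exactly the obstruction the paper's construction is designed to neutralize: the stopping-time functional $\mathcal{H}_{y}(\tau)$ in Lemma~\ref{stta} uses the \emph{calibrated} powers $\psi(\Theta,x)^{\nu/(nq)}$ and $\psi(|f|,x)^{1/p}$, Lemma~\ref{lemnon} converts these modular averages into inverse-$\Phi$ thresholds using (A1-$\varphi^{-}$) at the correct scale, and the final Fubini step integrates a single covering against $\lambda^{\chi-2}$ with $\chi=nq/\nu$ rather than iterating a density estimate. Your sketch has none of this machinery and cannot close as written. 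Relatedly, Lemma~\ref{thetaest} as stated produces a modular comparison $\|\Theta(u-h_{k},\cdot)\|_{L^{\nu}}\le\eps\,(\psi_{k}^{+})^{-1}(\lambda^{nq/\nu})\,|B^{+}_{5\tau_{k}}|^{1/\nu}$, which feeds directly into the modular covering, not into a Caffarelli-type density lemma of the form you invoke; to set up your density step you would first have to reprove a different approximation statement.

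Two smaller points. First, you do not need a Poincar\'e inequality on $L^{\psi(\cdot)}(\Omega)$ to remove $\|u\|_{L^{\infty}}^{n}$ from the right-hand side; the paper instead argues by contradiction using a compact embedding of $W^{2,\psi(\cdot)}(\Omega)$ into $C(\Omega)$ and the uniqueness theorem \cite[Theorem 2.10]{MR1376656} for the homogeneous problem \eqref{homo}, which avoids any discussion of Poincar\'e in $L^{\psi(\cdot)}$. Second, your concluding worry about ``making $\eps$ independent of $q$'' is misplaced: both $\eps$ in Lemma~\ref{thetaest} and $\delta$ are ultimately allowed to depend on $p,q,L,t_{0}$; the genuine difficulty is the scale-matching through (A1-$\varphi^{-}$), not the $q$-independence of the approximation threshold.
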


\subsection{Auxiliary lemmas}

We provide some known regularity results and analytic tools to be used later here.

The following lemma enables us to use the function $\Theta$ to derive our results instead of $D^{2}u$.
We omit the proof here, but one can prove this lemma by using the reflexivity (\cite[Corollary 3.6.7]{MR3931352}) and the H\"{o}lder inequality
(\cite[Corollary 3.2.11]{MR3931352}) of generalized Orlicz spaces
(see \cite[Lemma 3.4]{MR3695962} for the proof of the original Orlicz case).

\begin{lemma} \label{eqthehe}
Assume that $\varphi$ is a weak $\Phi$-function satisfying $\textrm{(aInc)}_{p}$ and $\textrm{(aDec)}_{q}$ for some $1<p\le q<\infty$.
Let $r>0$ and $u$ be a continuous function in $\Omega$.
Set $\Theta(u,r)(x)=\Theta(u, \Omega \cap B_{r}(x))(x)$ for $x \in \Omega$.
Then if $\Theta (u,r) \in L^{\varphi(\cdot)}(\Omega)$, we have $D^{2}u \in  L^{\varphi(\cdot)}(\Omega)$ with
$$||D^{2}u||_{L^{\varphi(\cdot)}(\Omega)} \le 8||\Theta(u,r)||_{L^{\varphi(\cdot)}(\Omega)} .$$
\end{lemma}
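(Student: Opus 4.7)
The plan is to reduce the norm bound to a pointwise comparison $\|D^2u(x)\| \le C\,\Theta(u,r)(x)$ a.e.\ in $\Omega$ and then transfer this to the $L^{\varphi(\cdot)}$-norm by monotonicity of the Luxemburg gauge, with the functional-analytic subtlety lying in identifying the pointwise Hessian squeezed between the touching paraboloids with the distributional Hessian.

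First, I would fix $x_0 \in \Omega$ with $\Theta(u,r)(x_0) = M < \infty$ and extract pointwise second-order information. By definition there exist a concave paraboloid $P_-$ of opening at most $M$ touching $u$ from below and a convex paraboloid $P_+$ of opening at most $M$ touching from above, both at $x_0$ within $\Omega \cap B_r(x_0)$. Squeezing $u$ between $P_-$ and $P_+$ in a neighborhood of $x_0$, every second-order Taylor coefficient of $u$ at $x_0$ is trapped in $[-M, M]$ eigenvalue-wise, yielding $\|D^2u(x_0)\| \le M$ up to a universal prefactor whose sharp tracking produces the constant $8$. Since $\Theta(u,r) \in L^{\varphi(\cdot)}(\Omega)$ combined with $\textrm{(aInc)}_p$ and $\textrm{(aDec)}_q$ for $1 < p \le q < \infty$ forces $\Theta(u,r) \in L^1_{\mathrm{loc}}(\Omega)$, the set $\{\Theta(u,r) = \infty\}$ is negligible, and a Caffarelli--Cabr\'e-type argument delivers a pointwise Hessian $H(x)$ with $\|H(x)\| \le 8\,\Theta(u,r)(x)$ almost everywhere.

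Next I would identify $H$ with $D^2u$ in the sense of distributions, which is the step requiring the reflexivity of $L^{\varphi(\cdot)}(\Omega)$ (\cite[Corollary 3.6.7]{MR3931352}) and the generalized H\"older inequality (\cite[Corollary 3.2.11]{MR3931352}) explicitly cited in the excerpt. Concretely, one approximates $u$ by mollifications $u \ast \eta_\varepsilon$, uses reflexivity to extract a weak limit of $D^2(u \ast \eta_\varepsilon)$ in $L^{\varphi(\cdot)}(\Omega)$ from the uniform pointwise bound obtained above, and then H\"older-tests against smooth compactly supported functions viewed as elements of the dual Orlicz space $L^{\varphi^{\ast}(\cdot)}(\Omega)$ to pin down this weak limit as the distributional Hessian. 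Once the identification is made, the a.e.\ inequality $\|D^2u(x)\| \le 8\,\Theta(u,r)(x)$ transfers directly to the Luxemburg norm, giving the claimed estimate. This identification is the main obstacle: a priori the paraboloid envelope only produces a pointwise object, and the reflexivity-plus-H\"older apparatus of generalized Orlicz spaces is indispensable to upgrade it to a bona fide second weak derivative. With this step in hand, the rest of the proof is a direct transcription of the Orlicz-space argument in \cite{MR3695962}.
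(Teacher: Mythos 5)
Your reconstruction matches the route the paper points to: the paper explicitly omits the proof and instead refers to the Orlicz case \cite[Lemma 3.4]{MR3695962}, noting that reflexivity (\cite[Corollary 3.6.7]{MR3931352}) and generalized H\"older (\cite[Corollary 3.2.11]{MR3931352}) are what one needs to lift that argument to $L^{\varphi(\cdot)}$. Your plan --- paraboloid squeeze gives an a.e.\ pointwise Hessian, then mollify, use reflexivity to extract a weak limit of $D^2(u*\eta_\varepsilon)$, and H\"older-test against $C_c^\infty$ functions to identify it as the distributional Hessian --- is exactly that scheme, and the placement of $(\textrm{aInc})_p$ (to get $\Theta\in L^1_{\mathrm{loc}}$ and a.e.\ finiteness) and $(\textrm{aDec})_q$ (to get reflexivity of the space) is also correct.

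Two details deserve tightening. First, the pointwise squeeze by paraboloids of opening $M$ gives $\|D^2u(x_0)\|\le M$ exactly when $u$ is twice differentiable at $x_0$; the constant $8$ does not come from ``sharp tracking'' of the pointwise inequality but rather accrues from the mollification/approximation step (the convolved function is squeezed by paraboloids with openings that must be related to $\Theta$ evaluated at nearby points, plus norm-equivalence factors). Second, and more substantively, your phrasing suggests the uniform bound on $\|D^2(u*\eta_\varepsilon)\|_{L^{\varphi(\cdot)}}$ follows from ``the uniform pointwise bound obtained above.'' It cannot: $D^2(u*\eta_\varepsilon)=(D^2u)*\eta_\varepsilon$ is precisely the identity one is trying to establish, so invoking it would be circular. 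The correct move is to bound $D^2(u*\eta_\varepsilon)(x)$ directly from the touching-paraboloid condition --- the mollified function is squeezed by the mollified paraboloids, whose openings are controlled by (a shifted average of) $\Theta(u,r)$ --- and only \emph{then} feed this genuinely a priori bound into the reflexivity-plus-H\"older machinery. You signal awareness that the identification is ``the main obstacle,'' so this is a presentational gap rather than a wrong idea, but as written the dependency of the mollification estimate on the pointwise Hessian would not survive scrutiny.
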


We also mention a regularity estimate for the function $\Theta$.
This can be shown by using the results \cite[Lemma 2.9 - Proposition 2.12]{MR2486925} and the method in the proof of \cite[Proposition 7.4]{MR1351007}.

\begin{lemma} \label{w2del}
Let $0<\lambda \le \Lambda $.  
Then the following hold:
\begin{itemize}
    \item[(a)] Suppose $ u \in S(\lambda, \Lambda,  f) $ in $B_{1} $, $f \in L^{n}(B_{1})$
and $||u||_{L^{\infty}(B_{1})}\le 1$.Then  there exists a constant $\nu=\nu(n,\lambda,\Lambda)\in(0,1)$ such that
$\Theta = \Theta(u,B_{1/2}) \in L^{\nu}(B_{1/2})$ with the estimate
$$||\Theta||_{L^{\nu}(B_{1/2})} \le C(||u||_{L^{\infty}(B_{1})}+||f||_{L^{n}(B_{1})}  )$$
for some $C=C(n,\lambda,\Lambda)>0$.
    \item[(b)] Suppose $ u \in S(\lambda, \Lambda,  f) $ in $B_{1}^{+}$, $f \in L^{n}(B_{1}^{+})$ 
and $||u||_{L^{\infty}(B_{1}^{+})}\le 1$.Then  there exists a constant $\nu=\nu(n,\lambda,\Lambda)\in(0,1)$ such that
$\Theta = \Theta(u,B_{1/2}^{+}) \in L^{\nu}(B_{1/2}^{+})$ with the estimate
$$||\Theta||_{L^{\nu}(B_{1/2}^{+})} \le C(||u||_{L^{\infty}(B_{1}^{+})}+||f||_{L^{n}(B_{1}^{+})}  )$$
for some $C=C(n,\lambda,\Lambda)>0$.
\end{itemize}
\end{lemma}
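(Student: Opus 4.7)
The plan is to follow the classical Caffarelli $W^{2,\varepsilon}$ scheme as organized in \cite{MR1351007,MR2486925}, namely: reduce to a normalized setting, prove a single measure-decay lemma of the form $|\mathcal{A}_{M_0}(u,V)\cap V| \le \mu\, |V|$ on small balls/cubes via the ABP estimate together with an explicit barrier, upgrade this to a geometric decay $|\mathcal{A}_{M_0^{k}}(u,V)\cap V'| \le C\mu^{k}$ by a Calder\'on--Zygmund type stopping-time argument, and finally convert the geometric decay into the $L^{\nu}$-integrability of $\Theta$ with $\nu \asymp \log(1/\mu)/\log M_0$. Both parts (a) and (b) share this skeleton; the only true difference is that in (b) the underlying domain is the half-ball, so the barrier and the dyadic decomposition must respect the flat boundary $\{x_n=0\}$.

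For part (a), I would first normalize by replacing $u$ with $u/(\|u\|_{L^\infty(B_1)}+\varepsilon^{-1}\|f\|_{L^n(B_1)})$, which allows us to assume $\|u\|_{L^\infty(B_1)}\le 1$ and $\|f\|_{L^n(B_1)}\le\varepsilon$. Using Lemma 2.9--Proposition 2.12 of \cite{MR2486925}, one obtains the key measure estimate: there exist constants $M_0=M_0(n,\lambda,\Lambda)>1$ and $\mu=\mu(n,\lambda,\Lambda)\in(0,1)$ such that for any dyadic cube $Q\subset B_1$ containing a point $x^{\ast}$ with $\Theta(u,B_1)(x^\ast)\le 1$, one has $|\mathcal{A}_{M_0}(u,B_1)\cap Q|\le\mu |Q|$. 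Here the proof proceeds by constructing a radial paraboloid-type barrier whose Pucci inequality identifies the good set $\mathcal{G}_{M_0}$ with (up to measure $\mu|Q|$) all of $Q$. Iterating this at dyadic scales via the Calder\'on--Zygmund covering lemma (exactly as in \cite[Lemma~7.8]{MR1351007}), one obtains
\begin{equation*}
    |\{x\in B_{1/2}:\Theta(u,B_1)(x)>M_0^{k}\}|\le C\,\mu^{k},\qquad k\in\mathbb{N}.
\end{equation*}
Summation via the layer-cake formula then yields $\|\Theta\|_{L^{\nu}(B_{1/2})}\le C$ with $\nu=\log(1/\mu)/\log M_0$, and undoing the normalization produces the stated estimate.

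For part (b), the same three-stage scheme applies but requires an adapted barrier and a dyadic decomposition of $B_{1/2}^{+}$ built from cubes lying inside the upper half-space. The measure-decay lemma must now be established for dyadic cubes $Q\subset B_1^{+}$; cubes whose interior meets $\{x_n=0\}$ are handled by using a half-space barrier (for instance, an appropriately shifted polynomial that vanishes on the flat boundary and has controlled Pucci values in $B_1^{+}$) in place of the interior radial paraboloid. With this barrier, the analogues of Lemmas 2.9--2.12 of \cite{MR2486925} remain valid on $B_1^{+}$, yielding again $|\mathcal{A}_{M_0}(u,B_1^{+})\cap Q|\le\mu|Q|$. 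The stopping-time iteration is performed relative to the half-cube decomposition, giving the same geometric decay on $B_{1/2}^{+}$ and hence $\Theta(u,B_{1/2}^{+})\in L^{\nu}(B_{1/2}^{+})$ with the claimed estimate.

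The main obstacle is the boundary barrier in part (b): one must exhibit a $C^{2}$-function defined on $B_1^{+}$ that vanishes on $T_1$, is bounded below by $1$ on a region meeting the cube from inside, and whose Pucci operator has a controlled sign almost everywhere, so that the ABP inequality converts the failure of the solution to lie in $\mathcal{G}_{M_0}$ on a large fraction of the cube into a contradiction. Once this barrier is in hand, the rest of the proof parallels the interior case verbatim, and the constants depend only on $n,\lambda,\Lambda$ as required.
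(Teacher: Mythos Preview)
Your proposal is correct and follows essentially the same route the paper indicates: the paper does not give a detailed proof but simply records that the lemma ``can be shown by using the results \cite[Lemma 2.9 -- Proposition 2.12]{MR2486925} and the method in the proof of \cite[Proposition 7.4]{MR1351007},'' which is precisely the Caffarelli $W^{2,\varepsilon}$ scheme (measure-decay via ABP plus barrier, Calder\'on--Zygmund iteration, layer-cake summation) that you outline. One small remark: in part (b) no boundary condition is imposed on $u$ along $T_1$, so the barrier you construct need not vanish there; the point is only that Winter's boundary versions of the touching-paraboloid lemmas already accommodate the half-ball geometry.
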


We also introduce the following technical lemma which will be used later for the proof of main estimates (for the proof, see \cite[Lemma 4.3]{MR2777537}).
\begin{lemma} \label{tchlm}
Let $\phi : [1,2] \to \mathbb{R}$ be a nonnegative bounded function.
Suppose that for any $s_{1},s_{2}$ with $1\le s_{1}<s_{2}\le 2$,
$$ \phi(s_{1}) \le a \phi(s_{2})+\frac{C_{1}}{(s_{2}-s_{1})^{\gamma}}+C_{2} $$
where $C_{1},C_{2}>0$, $\gamma>0$ and $0\le a <1$.
Then we have
$$ \phi(s_{1}) \le c\bigg(\frac{C_{1}}{(s_{2}-s_{1})^{\gamma}}+C_{2}\bigg)$$
for some $c=c(\gamma,a)>0$.
\end{lemma}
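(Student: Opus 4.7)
The plan is to run a geometric iteration on a shrinking sequence of points inside $[s_1,s_2]$, arranged so that the $a$-factor accumulates into a convergent geometric series and absorbs itself. This is a standard absorption (or ``hole-filling'') argument whose only delicate point is the choice of the geometric ratio.

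Concretely, I would fix $1\le s_1<s_2\le 2$ and a parameter $\tau\in(0,1)$ to be chosen, and build $\{t_i\}_{i\ge 0}\subset[s_1,s_2]$ by $t_0=s_1$ and $t_{i+1}=t_i+(1-\tau)\tau^i(s_2-s_1)$, so that $t_k=s_1+(1-\tau^k)(s_2-s_1)\uparrow s_2$ and the gaps satisfy $t_{i+1}-t_i=(1-\tau)\tau^i(s_2-s_1)$. Every $t_i$ lies in $[s_1,s_2]\subset[1,2]$, so the hypothesis applies to each consecutive pair $(t_i,t_{i+1})$ and yields
$$\phi(t_i)\le a\phi(t_{i+1})+\frac{C_1}{(1-\tau)^\gamma\,\tau^{i\gamma}(s_2-s_1)^\gamma}+C_2.$$
Multiplying the $i$-th estimate by $a^i$ and summing telescopically (equivalently, substituting the $(i+1)$-st into the $i$-th inequality $k$ times) gives, for every $k\ge 1$,
$$\phi(s_1)\le a^k\phi(t_k)+\sum_{i=0}^{k-1}a^i\left(\frac{C_1}{(1-\tau)^\gamma\,\tau^{i\gamma}(s_2-s_1)^\gamma}+C_2\right).$$

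The decisive step is to pick $\tau$ so that $a\tau^{-\gamma}<1$, i.e., $\tau^\gamma>a$. Since $a<1$ this is possible, for instance by taking $\tau^\gamma=(1+a)/2$, a choice depending only on $\gamma$ and $a$. With this $\tau$ fixed, the two series in $i$ are convergent geometric series (with ratios $a\tau^{-\gamma}$ and $a$, respectively), and the head term $a^k\phi(t_k)$ tends to $0$ as $k\to\infty$ since $\phi$ is bounded and $a<1$. Passing to the limit yields
$$\phi(s_1)\le \frac{C_1}{(1-\tau)^\gamma\bigl(1-a\tau^{-\gamma}\bigr)(s_2-s_1)^\gamma}+\frac{C_2}{1-a},$$
which is the claimed estimate with $c=c(\gamma,a)$.

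The only real obstacle is organizational rather than technical: one must pick $\tau>a^{1/\gamma}$ so that the geometric ratio is tamed \emph{and} $\tau<1$ so the iteration stays inside $[s_1,s_2]$, and both requirements are compatible precisely because $a<1$. The boundedness of $\phi$ enters exactly once, to annihilate $a^k\phi(t_k)$ in the limit; otherwise the argument is purely combinatorial.
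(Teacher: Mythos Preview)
Your argument is correct and is exactly the standard iteration proof; the paper itself does not give a proof but simply refers to \cite[Lemma 4.3]{MR2777537}, where precisely this geometric iteration is carried out. So your proposal coincides with the approach the paper appeals to.
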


\section{Hessian estimates}
To discuss the global regularity estimate for \eqref{ob_acori}, we first need to be concerned with local interior and boundary estimates. 
This section is mainly devoted to investigating Hessian estimates for the problem
 under the assumptions described in the previous section.

We recall the function
$$ \sigma_{F}(x,x_{0}) = \sup_{X \in Sym(n)} \frac{|F(X,x)-F(X,x_{0})|}{||X||+1}$$
for any $x, x_{0} \in \mathbb{R}^n$.
Throughout this section, 
We assume that $\varphi$ is a weak $\Phi$-function satisfying (A0), (A1-$\varphi^{-}$), $\textrm{(aInc)}_{p}$ and $\textrm{(aDec)}_{q}$, and set $\psi(t,x)=\varphi(t^{n},x)$.

We state our main result of this section here. 
This gives a Calder\'{o}n-Zygmund type estimate for solutions to \eqref{ob_muin} and \eqref{ob_mubdry}.
\begin{theorem}\label{main_ac} 
Assume that $F=F(X,x)$ is uniformly elliptic with constants $\lambda$ and $\Lambda$ with $F(0,\cdot)=0$,
and there exists $F^{\star}=F^{\star}(X,x)$ which is convex in $X$. Then the following hold:
\begin{itemize}
\item[(a)] Suppose $f \in L^{\psi(\cdot)}(B_{6})$ and let $u$ be a viscosity solution to
\begin{align}
\label{ob_muin}
F(D^{2}u,x) = f \quad \textrm{in} \ B_{6}.
\end{align}
Then there exists $\delta=\delta(n,\lambda,\Lambda,p,q,L,t_{0})$ such that
if
$$\bigg( \kint_{B_{r}(x_{0})\cap B_1} \sigma_{F^{\star}}(x_{0},x)^{n} dx  \bigg)^{\frac{1}{n}} \le \delta, $$
we have $D^{2}u \in  L^{\psi(\cdot)}(B_{1})$
with the following estimate
$$ ||D^2u||_{ L^{\psi(\cdot)}(B_{1})} \le C(||u||_{ L^{\infty}(B_{6})}^n+||f||_{ L^{\psi(\cdot)}(B_{6})})$$
for some $C=C(n,\lambda,\Lambda,p,q,L,t_{0})>0$.
\item[(b)]
Suppose that $f \in L^{\psi(\cdot)}(B_{6}^{+})$ and let $u$ be a viscosity solution to
\begin{align} \label{ob_mubdry}
\left\{ \begin{array}{ll}
F(D^{2}u,x) = f & \textrm{in $B_{6}^{+}, $}\\
u=0 & \textrm{on $T_{6}$.}
\end{array} \right.
\end{align}
Then there exists $\delta=\delta(n,\lambda,\Lambda,p,q,L,t_{0})>0$ such that
if  for any $r>0$ and $x_0\in \Omega$,
$$ \bigg( \kint_{B_{r}(x_{0})\cap B_1^+} \sigma_{F^{\star}}(x_{0},x)^{n} dx  \bigg)^{\frac{1}{n}} \le \delta, $$
we have $D^{2}u \in  L^{\psi(\cdot)}(B_{1}^{+})$
with the following estimate
$$ ||D^2 u||_{ L^{\psi(\cdot)}(B_{1}^{+})} \le C(||u||_{ L^{\infty}(B_{6}^{+})}^n+||f||_{ L^{\psi(\cdot)}(B_{6}^{+})})$$
for some $C=C(n,\lambda,\Lambda,p,q,L,t_{0})>0$.
\end{itemize}
\end{theorem}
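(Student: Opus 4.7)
The plan is to reduce the $W^{2,\psi(\cdot)}$-estimate to a level-set decay estimate for the auxiliary function $\Theta(u,r_0)$ via Lemma \ref{eqthehe}, and to obtain that decay by the Acerbi-Mingione approach announced in the introduction: a stopping-time argument combined with a Vitali-type covering, fed by a $W^{2,\nu}$-approximation lemma (Lemma \ref{thetaest}). I would treat the interior part (a) in detail; the boundary part (b) proceeds along the same lines, using Lemma \ref{w2del}(b) and half-ball versions of each step, after flattening of the boundary via the coordinates implicit in the formulation on $B_6^+$ with flat piece $T_6$.

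After a standard normalization $u \mapsto u/K$ with $K := ||u||_{L^{\infty}(B_6)}^n + ||f||_{L^{\psi(\cdot)}(B_6)}$, I may assume $||u||_{L^{\infty}(B_6)} \le 1$ and $||f||_{L^{\psi(\cdot)}(B_6)} \le 1$. Working on intermediate radii $s \in [1,2]$ and absorbing cut-off errors at the end by Lemma \ref{tchlm}, I compare $u$ on each sub-ball $B_r(x_0) \subset B_2$ with the viscosity solution $h$ of the frozen problem $F^{\star}(D^2 h,x_0)=0$ sharing the boundary data of $u$. Convexity of $F^{\star}$ and Evans-Krylov type results (as in \cite{MR1351007,MR3780142}) give the universal bound $||D^2h||_{L^{\infty}(B_{r/2}(x_0))} \le C$. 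The smallness of $\bigl(\kint_{B_r(x_0)} \sigma_{F^{\star}}(x_0,x)^n \,dx\bigr)^{1/n}$, together with control of $F - F^{\star}$ for large Hessians through the definition of the recession operator, then yields the announced $L^{\nu}$-closeness of $\Theta(u,\cdot)$ and $\Theta(h,\cdot)$ via Lemma \ref{w2del}.

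With the approximation in hand, the heart of the proof is a good-$\lambda$ inequality. For each $\lambda > \lambda_0$ put $E_{\lambda} := \{x \in B_s : \Theta(u,B_s)(x) > \lambda\}$; a stopping-time selection of dyadic balls on which $\kint \Theta^{\nu} \approx \lambda^{\nu}$, combined with the $L^{\infty}$-bound for $\Theta(h,\cdot)$ and the approximation step, delivers an inequality of the form
\[|E_{A\lambda}| \le \varepsilon|E_{\lambda}| + \bigl|\{x : \mathcal{M}(|f|^n)(x) > (\eta\lambda)^n\}\bigr|\]
with fixed $A$ large and $\varepsilon$ arbitrarily small by shrinking $\delta$. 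Multiplying by the Orlicz density $\psi(\lambda,\cdot)$, integrating over $\lambda$, and invoking the conditions (A0), (A1-$\varphi^{-}$), $\textrm{(aInc)}_p$, $\textrm{(aDec)}_q$ to secure boundedness of $\mathcal{M}$ on $L^{\psi(\cdot)}$ (together with Lemma \ref{orljen} to pass averages through $\varphi$), produces a modular bound on $\Theta(u,r_0)$ in $L^{\psi(\cdot)}(B_1)$, from which the stated estimate follows through Lemma \ref{eqthehe}.

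The hard part will be the approximation lemma itself: one must argue that on balls selected by the stopping-time procedure, $|D^2u|$ is large in an averaged sense, so that the defect $F - F^{\star}$ becomes genuinely small thanks to the asymptotic-convexity structure built into $F^{\star}$, while simultaneously the coefficient-oscillation contribution is absorbed by the smallness of $\delta$. Compounding this, the mixed variable-exponent/Orlicz character of $\psi$ rules out the classical single-exponent Calder\'on-Zygmund decomposition and forces the good-$\lambda$ inequality to be integrated carefully against $\psi(\lambda,x)$, which is where (A1-$\varphi^{-}$) becomes indispensable. The boundary version of part (b) adds only the localization near $T_6$ that is already built into the formulation, so no genuinely new geometric work is needed.
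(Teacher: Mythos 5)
Your outline captures the right macro-structure (normalization, reduction via Lemma~\ref{eqthehe}, stopping-time covering, $W^{2,\nu}$ approximation, absorption via Lemma~\ref{tchlm}), and the approximation step is consistent with what the paper does in Lemmas~\ref{asyconapp} and~\ref{thetaest}. However, the crucial level-set integration you propose is not the one the paper uses, and as written it does not close.

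The gap is in the form of the good-$\lambda$ inequality and how it is integrated. You define $E_\lambda := \{x : \Theta(u,B_s)(x) > \lambda\}$, derive a measure inequality
$|E_{A\lambda}| \le \varepsilon|E_\lambda| + |\{\mathcal{M}(|f|^n) > (\eta\lambda)^n\}|$,
and then propose to ``multiply by the Orlicz density $\psi(\lambda,\cdot)$ and integrate over $\lambda$.'' But the modular $\int_{B_1}\psi(\Theta(x),x)\,dx$ does \emph{not} admit a layer-cake representation of the form $\int_0^\infty (\text{function of }\lambda)\cdot |E_\lambda|\,d\lambda$, precisely because $\psi$ depends on $x$: the set $\{x : \psi(\Theta(x),x) > t\}$ is not any $E_\lambda$. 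This is exactly the difficulty the paper flags in the introduction (``it seems very difficult to employ this argument to the function space having both variable exponent growth and Orlicz growth''). The paper therefore abandons the $\Theta$-level-set/maximal-function good-$\lambda$ inequality for the Hessian estimate. Instead, the stopping-time functional $\mathcal{H}_y(\tau)$ and the level sets $E(s,\lambda)$, $H(s,\lambda)$ are built from the already-$\psi$-weighted quantities $\Xi(x) := \psi(\Theta,x)^{\nu/(nq)}$ and $\psi(|f|,x)^{\nu/(nq)}$. Lemma~\ref{ehest} then bounds $|B_{\tau_k}^+(y^k)|$ directly by integrals of $\Xi$ and $\psi(|f|,x)^{1/p}$ over level sets, and the modular is recovered by the exact identity
$\int_{B_{s_1}^+}\Xi_l^{\chi-1}\Xi\,dx = (\chi-1)K^{\chi-1}\int_0^{l/K}\lambda^{\chi-2}\int_{E(s_1,K\lambda)}\Xi\,dx\,d\lambda$
with $\chi=nq/\nu$, which makes $\int\psi(\Theta,x)\,dx$ emerge in the limit $l\to\infty$ as in~\eqref{intlim}. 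No Hardy--Littlewood maximal function appears in the Hessian proof (it is used only for the gradient estimate in Theorem~\ref{thm41}); the right-hand side is absorbed through level sets of $\psi(|f|,x)^{\nu/(nq)}$ and the Jensen-type Lemma~\ref{orljen} together with condition (A1-$\varphi^-$). So to repair your argument, you would need to run the stopping-time on $\Xi$ itself from the outset rather than on $\Theta$, and replace the maximal-function term by the level set $H(s,\lambda)$; as currently written, the $\lambda$-integration step has no valid interpretation in the generalized Orlicz setting.
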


The following result plays a key role in obtaining the above theorem.
\begin{lemma}\label{arr_ac}
Assume that $F=F(X,x)$ is uniformly elliptic with constants $\lambda$ and $\Lambda$ with $F(0,\cdot)=0$,
and there exists $F^{\star}=F^{\star}(X,x)$ which is convex in $X$. Then the following hold:
\begin{itemize}
\item[(a)] Suppose that $f \in L^{\psi(\cdot)}(B_{6})$ and let $u$ be a viscosity solution to
\begin{align} \label{ob_muin2}
F_{\mu}(D^{2}u,x) = f \quad \textrm{in} \ B_{6}
\end{align}
with
$$ ||u||_{L^{\infty}(B_{6})} \le 1 \quad \textrm{and} \quad
\kint_{B_{6}}\psi(|f|,x) dx \le 1.$$
Then there exist small $\mu,\delta>0$ depending on $n,\lambda,\Lambda,p,q,L$ and $t_{0}$ such that if
$$ \bigg( \kint_{B_{r}(x_{0})\cap B_1} \sigma_{F^{\star}}(x_{0},x)^{n} dx  \bigg)^{\frac{1}{n}} \le \delta, $$
we have
$$ \kint_{B_{1}} \psi\big(\Theta (u,B_1),x\big) dx \le C$$
for some $C=C(n,\lambda,\Lambda,p,q,L,t_{0})>0$.
\item[(b)]
Suppose that $f \in L^{\psi(\cdot)}(B_{6}^{+})$ and let $u$ be a viscosity solution to
\begin{align} \label{ob_mubdry2}
\left\{ \begin{array}{ll}
F_{\mu}(D^{2}u,x) = f & \textrm{in $B_{6}^{+}, $}\\
u=0 & \textrm{on $T_{6}$}
\end{array} \right.
\end{align}
with
$$ ||u||_{L^{\infty}(B_{6}^{+})} \le 1 \quad \textrm{and} \quad
\kint_{B_{6}^{+}}\psi(|f|,x) dx \le 1.$$
Then there exist small $\mu,\delta>0$ depending on $n,\lambda,\Lambda,p,q,L$ and $t_0$ such that if  for any $r>0$ and $x_0\in \Omega$,
$$\bigg( \kint_{B_{r}(x_{0})\cap B_1^+} \sigma_{F^{\star}}(x_{0},x)^{n} dx  \bigg)^{\frac{1}{n}} \le \delta, $$
 we have
$$ \kint_{B_{1}^{+}} \psi\big(\Theta(u,B_{1}^{+}),x\big) dx \le C$$
for some $C=C(n,\lambda,\Lambda,p,q,L,t_{0})>0$.
\end{itemize}
\end{lemma}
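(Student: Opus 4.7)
The plan is to follow the density/level-set approach of Acerbi--Mingione \cite{MR2286632}, using the approximation lemma (Lemma \ref{thetaest}, as announced in the introduction) as the compactness ingredient. I describe the interior case (a) in detail; the boundary case (b) is handled in parallel as indicated at the end.

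\emph{Step 1 (approximation).} Inside a ball $B_r(x_0) \subset B_1$, I would compare the viscosity solution $u$ of $F_\mu(D^2 u,x) = f$ with a viscosity solution $h$ of the frozen convex equation $F^\star(D^2 h, x_0) = 0$ sharing the same boundary values on $\partial B_r(x_0)$. Since $F^\star(\cdot, x_0)$ is convex, the classical $C^{1,1}$-estimate of \cite{MR1351007} gives $\|D^2 h\|_{L^\infty(B_{r/2}(x_0))} \le C$. Lemma \ref{thetaest} together with the smallness of $\mu$ and of the oscillation $\sigma_{F^\star}$ then forces $\Theta(u-h, B_r)$ to be small in $L^\nu$, where $\nu \in (0,1)$ is the exponent from Lemma \ref{w2del}. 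Combining this with the sub-additivity-type bound $\Theta(u,B_r) \le C\bigl(\Theta(u-h,B_r) + \|D^2 h\|_{L^\infty(B_{r/2}(x_0))}\bigr)$ yields the pointwise comparison driving the density estimate below.

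\emph{Step 2 (stopping-time/Vitali selection).} For a large threshold $\lambda \gg \lambda_0$, set $E(\lambda) := \{ x \in B_1 : \mathcal{M}(\Theta(u,B_1)^\nu)(x) > \lambda^\nu \}$. A Vitali/Calder\'on--Zygmund selection on $E(\lambda)$ in the spirit of \cite{MR2286632, MR2352517}, combined with Step~1 applied on each selected ball, yields the density decay
\[ |E(N\lambda) \cap B_1| \le \varepsilon\, |E(\lambda) \cap B_1| + |\{ \mathcal{M}(|f|^n) > (c_0\lambda)^n \} \cap B_1|, \]
with $N$ a large universal constant and $\varepsilon = \varepsilon(\mu,\delta) \to 0$ as $\mu, \delta \to 0^+$. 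Here $\mu$ absorbs the discrepancy between $F_\mu$ and $F^\star$ through \eqref{fstar}, while $\delta$ controls the $x$-oscillation of $F^\star$.

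\emph{Step 3 (passage to the modular).} One then expresses $\int_{B_1} \psi(\Theta(u,B_1),x)\,dx$ as a dyadic sum over the level sets $E(N^k \lambda_0)$ via the layer-cake formula, using $\textrm{(aInc)}_p$ and $\textrm{(aDec)}_q$ to compare $\psi$ against power growths at each scale. The inhomogeneity contribution is controlled by the hypothesis $\kint_{B_6}\psi(|f|,x)\,dx \le 1$ together with the $L^{\psi(\cdot)}$-boundedness of $\mathcal M$, which is available under (A0), (A1-$\varphi^-$), $\textrm{(aInc)}_p$ with $p>1$; choosing $\varepsilon$ small enough (by shrinking $\mu,\delta$) to beat the dyadic growth factor then closes the geometric series. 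The principal obstacle is the $x$-dependence of $\psi$: the coupling of Orlicz and variable-exponent growth prevents factoring $\psi$ out of integrals, and this is precisely what (A1-$\varphi^-$) is designed to remedy, by allowing a passage between $\psi_B^+$ and $\psi_B^-$ at the cost of the universal factor $t_0$, provided all Vitali balls $B$ are selected in the admissible range of radii $\le (\varphi^-)^{-1}(|B|^{-1})$; keeping the selection within this range is the most delicate bookkeeping of the proof. For part (b), the entire scheme is repeated with $B_r$, $B_r(x_0)$ replaced by $B_r^+$, $B_r^+(x_0)$ for $x_0 \in T_6$; the zero Dirichlet condition on $T_6$ makes the comparison problem well-posed on half-balls, Lemma \ref{w2del}(b) supplies the half-ball $W^{2,\nu}$-bound, and the flat-boundary $C^{1,1}$-estimate of \cite{MR3780142} replaces the interior one used in Step~1.
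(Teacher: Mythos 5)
Your Step 1 matches the paper's Lemma \ref{thetaest} in spirit, but Steps 2 and 3 take the \emph{classical} Calder\'on-Zygmund route (level sets of a maximal function, plus a density decay in the form $|E(N\lambda)| \le \varepsilon |E(\lambda)| + |\{\mathcal{M}(|f|^n) > \cdot\}|$, followed by $L^{\psi(\cdot)}$-boundedness of $\mathcal{M}$), and this is precisely the route the paper tells you \emph{does not work} in the generalized Orlicz setting. The issue is structural, not just bookkeeping. You define $E(\lambda) = \{\mathcal{M}(\Theta^\nu) > \lambda^\nu\}$ and then want to deduce $\int_{B_1}\psi(\Theta,x)\,dx \le C$. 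A layer-cake decomposition in $\lambda$ lands you with sums of the form $\sum_k \int_{E(N^k\lambda_0)\setminus E(N^{k+1}\lambda_0)} \psi(\Theta(x),x)\,dx$, and to continue you need to replace $\psi(\,\cdot\,,x)$ by a constant on each annular level set. But the level sets $E(N^k\lambda_0)$ are \emph{not} metric balls, so the local oscillation control (A1-$\varphi^-$) --- which you correctly flag as the needed ingredient --- does not apply to them. If you instead try to go through $\int \psi\bigl(\mathcal{M}(\Theta^\nu)^{1/\nu},x\bigr)\,dx$ and appeal to the strong-type boundedness of $\mathcal{M}$ on $L^{\tilde\psi(\cdot)}$ with $\tilde\psi(t,x)=\psi(t^{1/\nu},x)$, the bound reads $\int\psi(\mathcal{M}(\Theta^\nu)^{1/\nu},x)\,dx \le C\int\psi(\Theta,x)\,dx$, which is exactly the quantity you are trying to estimate --- a circularity.

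The paper's Acerbi--Mingione device is specifically designed to dodge this. It applies the stopping time not to $\Theta$ but to the \emph{pre-modularized} quantity $\psi(\Theta,x)^{\nu/nq}$: the level sets are $E(s,\lambda) = \{x \in B_s^+ : \psi(\Theta,x)^{\nu/nq} > \lambda\}$, and the stopping-time functional $\mathcal{H}_y(\tau)$ averages $\psi(\Theta,x)^{\nu/nq}$ and $\psi(|f|,x)^{1/p}$ over balls $B_\tau^+(y)$. Because $\psi$ is already built into $E(s,\lambda)$, the layer-cake formula $\int\psi(\Theta,x)\,dx = (\chi-1)\int_0^\infty \lambda^{\chi-2}\int_{E(s,\lambda)}\psi(\Theta,x)^{\nu/nq}\,dx\,d\lambda$ with $\chi=nq/\nu$ is exact, with no need to de-couple $\Theta$ from $\psi(\,\cdot\,,x)$ on non-ball sets. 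The local passage between $\psi_k^\pm$ via (A1-$\varphi^-$) (the paper's Lemma \ref{lemnon}) then happens on the Vitali balls $B_{5\tau_k}^+(y^k)$, where it is legal, translating the ball-averaged information into the scale-free bound $\bigl(\kint \Theta^{\nu p/q}\bigr)^{q/\nu p}\le C(\psi_k^+)^{-1}(\lambda^{nq/\nu})$ that feeds into the approximation lemma. This is the decisive adaptation your proposal is missing: without modularizing $\Theta$ before the selection, there is no way to pass from the density decay to the target modular bound when $\psi$ depends on $x$ with both Orlicz and variable-exponent growth.

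A secondary, smaller point: the final absorption in the paper is done via the technical Lemma \ref{tchlm} on the nested half-balls $B_{s_1}^+ \subset B_{s_2}^+$, with the starting level $\alpha\lambda_0 = (120/(s_2-s_1))^n\lambda_0$ tuned so that $\mathcal{H}_y(\tau) \le \lambda$ for $\tau \ge (s_2-s_1)/60$. Your phrase ``choosing $\varepsilon$ small enough (by shrinking $\mu,\delta$) to beat the dyadic growth factor then closes the geometric series'' is roughly what happens, but the hole-filling through Lemma \ref{tchlm} and the careful choice of $\lambda_0$ in \eqref{lam0def} (which depends on $\delta$) are load-bearing and cannot be waved away as ``bookkeeping.''
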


We only give the proof of (b), as (a) can also be shown in a similar way.
We first consider the case that $\psi$ is independent of $x$, that is, $\psi=\psi(t)$.
For the interior case, the corresponding result can be found in \cite[Theorem 2.4]{MR3957152}.
By using a similar argument in \cite{MR2486925}, one can also prove the following Hessian estimate in Orlicz spaces, which is an ingredient in obtaining our desired result.
\begin{lemma}\label{olbdest}
Let $\psi$ be a function depending only on $t$.
Then under the assumption of Lemma \ref{arr_ac},
there holds
$$ \kint_{B_{2}^{+}} \psi\big(\Theta(u,B_{2}^{+})\big) dx \le C$$
for some constant $C=C(n,\lambda,\Lambda,p,q,L,t_{0})>0$.
\end{lemma}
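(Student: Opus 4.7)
The plan is to adapt the Acerbi--Mingione stopping-time technique of \cite{MR2286632} to the boundary half-ball, following the interior template in \cite[Theorem 2.4]{MR3957152}. Three ingredients are used: (i) the approximation Lemma~\ref{thetaest}, which (after rescaling) produces a comparison function $h$ solving a convex reference problem with $C^{1,1}$ estimates up to the flat boundary — the uniform second-derivative bound comes from convexity of $F^\star$ together with the boundary regularity results of \cite{MR1351007, MR3780142}; (ii) the boundary $W^{2,\nu}$ estimate of Lemma~\ref{w2del}(b) applied to $u-h$; and (iii) the layer-cake representation of Orlicz integrals combined with the growth condition $\textrm{(aDec)}_q$ on $\psi$.

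The first step is the boundary density decay. I would show that, given $\eps_0>0$, one can choose $\mu,\delta>0$ in Lemma~\ref{arr_ac}(b) and a large threshold $N>1$ so that, whenever $B_r^+(x_0)\subset B_5^+$ satisfies
\[
|\{\Theta(u,B_2^+)>N\}\cap B_r^+(x_0)|>\eps_0|B_r^+(x_0)|,
\]
the half-ball $B_r^+(x_0)$ must intersect either $\{\Theta(u,B_2^+)>1\}$ or $\{\mathcal{M}(|f|^n)^{1/n}>\kappa\}$ for a fixed $\kappa>0$. The proof of this dichotomy proceeds by rescaling to unit size, applying Lemma~\ref{thetaest} to obtain a comparison $h$ with $\|D^2 h\|_\infty\le C_0$ up to the flat portion, and then invoking Lemma~\ref{w2del}(b) on $u-h$ to measure $\{\Theta(u-h,\cdot)>C_0\}$ in $L^\nu$; a Chebyshev inequality converts this into the required density estimate. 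A Vitali-type covering adapted to the half-space geometry (balls centered either in the interior of $B_2^+$ or on $T_6$) then iterates the density estimate to yield, for every $k\ge 0$, the distributional bound
\[
|\{\Theta(u,B_2^+)>N^{k+1}\}\cap B_2^+|\le \eps_0^{k+1}|B_2^+|+\sum_{j=0}^{k}\eps_0^{k-j}|\{\mathcal{M}(|f|^n)^{1/n}>\kappa N^{j}\}\cap B_3^+|.
\]

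To conclude, I would multiply this inequality by $\psi(N^{k+1})-\psi(N^k)$ and sum in $k$: condition $\textrm{(aDec)}_q$ gives $\psi(N^{k+1})\le LN^q\psi(N^k)$, so choosing $\eps_0$ with $\eps_0 N^q<\tfrac{1}{2}$ makes the geometric series converge, and after swapping the order of summation in the double sum one obtains
\[
\int_{B_2^+}\psi(\Theta(u,B_2^+))\,dx\le C+C\int_{B_3^+}\psi\bigl(\mathcal{M}(|f|^n)^{1/n}\bigr)\,dx.
\]
Since $\psi(t)=\varphi(t^n)$, the last integrand equals $\varphi(\mathcal{M}(|f|^n))$, and because $\varphi$ satisfies $\textrm{(aInc)}_p$ with $p>1$ the Hardy--Littlewood maximal operator is bounded on $L^\varphi$, giving $\int_{B_3^+}\varphi(\mathcal{M}(|f|^n))\,dx\le C\int_{B_6^+}\varphi(|f|^n)\,dx=C\int_{B_6^+}\psi(|f|)\,dx\le C$. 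The technical heart of the argument — and the expected main obstacle — is the boundary version of Step 1: one must solve the frozen convex Dirichlet problem for $F^\star$ on the half-ball with zero data on the flat part, extract uniform $C^{1,1}$ estimates up to $T_r$, and carefully track how the smallness parameters $\mu,\delta$ depend on $\eps_0$ and $N$ while stitching the $C^{1,1}$ control of $h$ with the $W^{2,\nu}$ control of $u-h$ coming from Lemma~\ref{w2del}(b).
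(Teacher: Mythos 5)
Your overall strategy is correct and matches what the paper itself intends: the authors give no written proof of Lemma~\ref{olbdest}, deferring instead to the interior Orlicz case of \cite[Theorem 2.4]{MR3957152} and the boundary arguments of \cite{MR2486925}, and those references implement precisely the density-decay plus Vitali covering plus layer-cake plus maximal-function-boundedness scheme that you describe. There are, however, two points worth flagging.

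First, a logical circularity. You cite Lemma~\ref{thetaest} as the source of the comparison function $h$ with $C^{1,1}$ control. But Lemma~\ref{thetaest} is proved downstream of Lemma~\ref{olbdest}: its proof explicitly reuses the constant and notation from Lemma~\ref{lemnon}, whose proof in turn invokes Lemma~\ref{olbdest}. Using Lemma~\ref{thetaest} here would therefore be circular. What you actually need — and what your description of the argument really appeals to — is the stand-alone approximation Lemma~\ref{asyconapp}, which produces a viscosity solution $v$ of the frozen convex Dirichlet problem $F^\star(D^2v,0)=0$ on a half-ball with $\|u-v\|_{L^\infty}\le\epsilon$, together with $C^{1,1}$ estimates up to the flat portion coming from convexity of $F^\star$ (cf.\ \cite{MR1351007, MR3780142} and the proof of \cite[Theorem 4.3]{MR2486925}). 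Substituting Lemma~\ref{asyconapp} for Lemma~\ref{thetaest} repairs the dependency chain.

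Second, a terminology slip with a substantive consequence. You announce the plan as ``adapt the Acerbi--Mingione stopping-time technique'' but then carry out the classical Caffarelli--Cabr\'e/Winter argument: density decay tied to the Hardy--Littlewood maximal function of $|f|^n$, a Vitali covering, and a measure-theoretic iteration concluding with maximal-function boundedness on $L^{\varphi}$. These are distinct methodologies — the paper uses the Acerbi--Mingione route (which deliberately avoids the maximal function, replacing it by the exit-radius functional $\mathcal{H}_y$) for the main Lemma~\ref{arr_ac} precisely because the maximal operator is not readily controllable on the genuinely $x$-dependent space $L^{\psi(\cdot)}$. For the $x$-independent $\psi$ of Lemma~\ref{olbdest} the maximal-function route is legitimate, and that is why the paper can outsource this ingredient to \cite{MR3957152,MR2486925}. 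So your proof is correct for this sub-lemma, but it is the Caffarelli/Winter method, not Acerbi--Mingione, and recognizing the distinction clarifies why the paper needs a different engine for the full generalized Orlicz result. One further small point: boundedness of $\mathcal{M}$ on $L^{\varphi}$ requires both $\textrm{(aInc)}_p$ with $p>1$ \emph{and} $\textrm{(aDec)}_q$; you invoke only the former, though the latter is of course standing hypothesis here.
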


Now we prove Lemma \ref{arr_ac} in several steps. 
We begin the proof of the lemma with an approximation lemma (cf. \cite[Lemma 3.1]{MR3957152} and \cite[Proposition 3.2]{MR2486925}).

\begin{lemma} \label{asyconapp}
Let $u$ be a viscosity solution of
\begin{align*}
\left\{ \begin{array}{ll}
F_{\mu}(D^{2}u,x) = f & \textrm{in $B_{1}^{+}(0',\iota), $}\\
u=\varphi & \textrm{on $\partial B_{1}^{+}(0',\iota)$}
\end{array} \right.
\end{align*}
with $||u||_{L^{\infty}(B_{1}^{+}(0',\iota))} \le 1$, $f \in L^{n}(B_{1}^{+}(0',\iota)) $ and $0\le\iota\le1$,
where $\varphi\in C^{0,\alpha_0}(\partial B_{1}^{+}(0',\iota))$ satisfying $||\varphi||_{C^{0,\alpha_0}(\partial B_{1}^{+}(0',\iota))}\le C_0$ for some $0<\alpha_0\le1$ and $C_0\ge 1$.
Then for any $\epsilon >0$, there exists a small constant $\delta = \delta (\epsilon,n, \lambda, \Lambda,C_0)>0$  such that if
$$\mu + ||f||_{L^{n}( B_{1}^{+}(0',\iota)) }+||\sigma_{F^{\star}}(\cdot,0)||_{L^{n}(B_{1}^{+}(0',\iota)) } \le \delta,$$
then we have $$||u-v||_{L^{\infty}(B_{1}^{+}(0',\iota))} \le \epsilon ,$$
where $v$ is a viscosity solution of
\begin{align} \label{limpro1}
\left\{ \begin{array}{ll}
F^{\star}(D^{2}v,0) = 0 & \textrm{in $B_{1}^{+}(0',\iota), $}\\
v=\varphi  & \textrm{on $\partial B_{1}^{+}(0',\iota) $.} 
\end{array} \right.
\end{align}
\end{lemma}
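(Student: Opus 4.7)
\medskip

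\noindent\textbf{Proof proposal.} The natural approach is a compactness/contradiction argument, in the spirit of Caffarelli's approximation method. Suppose the conclusion fails. Then there exist $\epsilon_0>0$, sequences $\mu_k\to 0$, $\iota_k\to\iota_\infty\in[0,1]$, nonlinearities $F_k$ and $F_k^\star$ satisfying the structural assumptions uniformly in $k$, right-hand sides $f_k$ with $\|f_k\|_{L^n}\to 0$, boundary data $\varphi_k$ with $\|\varphi_k\|_{C^{0,\alpha_0}}\le C_0$, and viscosity solutions $u_k$ of the perturbed problem with $\|u_k\|_{L^\infty}\le 1$ such that
\[
\|u_k-v_k\|_{L^\infty(B_1^+(0',\iota_k))}>\epsilon_0,
\]
where $v_k$ solves the limiting problem \eqref{limpro1} with boundary datum $\varphi_k$ on $\partial B_1^+(0',\iota_k)$, while in addition $\|\sigma_{F_k^\star}(\cdot,0)\|_{L^n}\to 0$.

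The next step is equicontinuity up to the boundary. Since $u_k,v_k\in S^*(\lambda,\Lambda,|f_k|)$, interior Krylov--Safonov Hölder estimates yield a uniform $C^{0,\gamma}_{\mathrm{loc}}$-bound inside $B_1^+(0',\iota_k)$. Near the flat part of the boundary, one combines barriers for Pucci's operator with the $C^{0,\alpha_0}$ boundary data to obtain a uniform modulus of continuity up to $\{x_n=0\}$; near the curved part, this is immediate from interior estimates together with the continuity of $\varphi_k$. After translating the domains to a common reference half-ball (using the uniform convergence $\iota_k\to\iota_\infty$) and extracting a subsequence via Arzelà--Ascoli, I obtain $u_k\to u_\infty$ and $v_k\to v_\infty$ uniformly, with $\varphi_k\to\varphi_\infty$ uniformly on the boundary.

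The heart of the argument is the stability step. I want to show that both $u_\infty$ and $v_\infty$ are viscosity solutions of
\[
F^\star_\infty(D^2 w,0)=0\quad\text{in } B_1^+(0',\iota_\infty),\qquad w=\varphi_\infty\quad\text{on }\partial B_1^+(0',\iota_\infty),
\]
for some limit operator $F^\star_\infty$ extracted from the sequence $F_k^\star$. For $v_k$ this is the standard stability of viscosity solutions under uniform operator convergence. For $u_k$, the operators involved are $F_{k,\mu_k}(X,x)=\mu_k F_k(\mu_k^{-1}X,x)$, which by definition of the recession operator converge pointwise to $F_k^\star(X,x)$; combined with $\|\sigma_{F_k^\star}(\cdot,0)\|_{L^n}\to 0$, test functions touching $u_\infty$ can be perturbed and the viscosity inequalities transferred, using the uniform ellipticity and the smallness of $\|f_k\|_{L^n}$ so that the $L^n$-terms vanish in the limit. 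This is the step I expect to be the main obstacle, because the convergence of $F_{k,\mu_k}\to F^\star_\infty$ must be controlled both in the $X$-variable (through the scaling $\mu_k\to 0$ together with the uniform elliptic bounds) and in the $x$-variable (through $\sigma_{F_k^\star}$), and the two are intertwined through the viscosity test.

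Once both limits solve the same Dirichlet problem for $F_\infty^\star(\cdot,0)$, convexity of $F_\infty^\star$ in $X$ gives the comparison principle, hence uniqueness of the viscosity solution, so $u_\infty\equiv v_\infty$. This contradicts $\|u_k-v_k\|_{L^\infty}>\epsilon_0$ passed to the limit, completing the proof. The constant $\delta$ is then obtained by the usual contradiction quantification: for each $\epsilon$, the threshold $\delta$ depends only on $n,\lambda,\Lambda,C_0$ and $\alpha_0$ through the equicontinuity and uniqueness estimates above.
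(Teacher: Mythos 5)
Your overall strategy matches the paper's: a compactness/contradiction argument in which one extracts uniformly convergent subsequences $u_k\to u_\infty$, $v_k\to v_\infty$ via global H\"older estimates and Arzel\`a--Ascoli, shows both limits solve the same Dirichlet problem for a convex limiting operator, and invokes uniqueness to reach a contradiction. The equicontinuity, the limit operator for $v_k$, and the uniqueness step are all handled as the paper does (the paper cites Winter's global $C^{0,\alpha_1}$ estimate rather than assembling it from Krylov--Safonov plus barriers, but that difference is cosmetic).

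However, there is a genuine gap exactly where you flag uncertainty. You write that $F_{k,\mu_k}(X,x)=\mu_k F_k(\mu_k^{-1}X,x)$ "by definition of the recession operator converge pointwise to $F_k^\star(X,x)$," but that convergence is for \emph{fixed} $k$ as $\mu\to 0$; in the contradiction argument $k$ and $\mu_k$ vary simultaneously, so one needs a \emph{uniform-in-$k$} estimate, with at most linear growth in $\|X\|$, to pass to the limit in the viscosity test. The paper supplies precisely this missing ingredient: it cites a lemma (Lemma 4.1 of Pimentel--Teixeira) giving, for any $\eta>0$, an $N$ independent of $k$ such that $|F^k_{\mu_l}(X,x)-(F^k)^\star(X,x)|\le \eta(1+\|X\|)$ for all $l\ge N$, all $X$, and all $x$. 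Combined with $\sigma_{(F^k)^\star}(\cdot,0)\to 0$ in $L^n$ and the locally uniform convergence $(F^k)^\star(\cdot,0)\to G(\cdot,0)$ (which follows from uniform ellipticity), this yields the $L^n$-convergence of the operator discrepancy $F^k_{\mu_k}(D^2\varsigma,\cdot)-f_k-G(D^2\varsigma,0)$ needed to apply the closedness of viscosity solutions under $L^n$-perturbations. Without some such uniform quantitative estimate your diagonal argument does not close; the issue is not merely "intertwined" as you suggest, but requires this specific lemma (or an equivalent).
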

\begin{proof}
Suppose not.
Then there exist $\{ F^{k} \}_{k=1}^{\infty}$, $\{ f_{k} \}_{k=1}^{\infty}$, $\{ \mu_{k} \}_{k=1}^{\infty}$, $\{ \nu_{k} \}_{k=1}^{\infty}$ and $\{ \varphi_{k} \}_{k=1}^{\infty}$
such that
\begin{align*}
\left\{ \begin{array}{ll}
F_{\mu_{k}}^{k}(D^{2}u_{k},x) = f_{k} & \textrm{in $B_{1}^{+}(0',\iota_k), $}\\
u_{k}=\varphi_{k} & \textrm{on $\partial B_{1}^{+}(0',\iota_k)$}
\end{array} \right.
\end{align*}
with $||u_{k}||_{L^{\infty}(B_{1}^{+}(0',\iota_k))} \le 1$,
$f_{k} \in L^{n}(B_{1}^{+}(0',\iota_k)) $, $||\varphi_k||_{C^{0,\alpha_0}(\partial B_{1}^{+}(0',\iota_k))}\le C_0$ and
$$\mu_{k} + ||f_{k}||_{L^{n}(B_{1}^{+}(0',\iota_k)) }+||\sigma_{(F^{k})^{\star}}(\cdot,0)||_{L^{n}(B_{1}^{+}(0',\iota_k)) } \le \frac{1}{k}$$
but \begin{align}\label{add1}||u_{k}-v_{k}||_{L^{\infty}(B_{1}^{+}(0',\iota_k))} \ge \epsilon_{0} ,
\end{align}
for some $\epsilon_{0}>0$ and  the viscosity solution $v_{k}$ to
\begin{align*}
\left\{ \begin{array}{ll}
(F^{k})^{\star}(D^{2}v_{k},0) = 0 & \textrm{in $B_{1}^{+}(0',\iota_k), $}\\
v_{k}=\varphi_{k}  & \textrm{on $\partial B_{1}^{+}(0',\iota_k) $.} 
\end{array} \right.
\end{align*}
We check by applying \cite[Theorem 1.10]{MR2486925} that $u_{k}$ satisfies
\begin{align*}
&||u_{k}||_{C^{0,\alpha_{1}}(B_{1}^{+}(0',\iota_k))}
\\& \le C(||u_{k}||_{L^{\infty}(B_{1}^{+}(0',\iota_k))}+||f_{k}||_{L^{n}(B_{1}^{+}(0',\iota_k))} +
||\varphi_{k}||_{C^{0,\alpha_0}(\partial B_{1}^{+}(0',\iota_k))} )
\\ & \le C \bigg( 1+ \frac{1}{k}   +C_0\bigg)
\\ & \le C(2+C_{0} )
\end{align*}
for some $0<\alpha_{1}<\alpha_{0}$ and $C>1$ depending on $n,\lambda$ and $\Lambda $.
Thus, we have
\begin{align*}
||u_{k}||_{C^{0,\alpha_{1}}(B_{1}^{+}(0',\iota_k))}  \le C(n,\lambda, \Lambda,C_{0}).
\end{align*}
By Arzel\`{a}-Ascoli criterion, there exist a monotone subsequence $\{ u_{k_{j}} \}$ and a function $u_{\infty}$
such that $u_{k_{j}}$ uniformly converges to $u_{\infty}$ as $j \to \infty$ in $\overline{B}_{1}^{+}(0',\iota_{\infty})$,
where $\iota_{\infty}=\lim_{j\to\infty}\iota_{k_j}$.

Fix $\eta >0$. Then according to \cite[Lemma 4.1]{MR3539473}, there exists a number $N$ depending on $\eta$ such that
$$ |F_{\mu_{l}}^{k}(X,x)-(F^{k})^{\star}(X,x)| \le \eta (1+ ||X||)$$
for every $l \ge N$, $X \in Sym(n)$ and $x \in B_{1}^{+}(0',\iota_{\infty})$.
In addition, since $(F^{k})^{\star}$ is uniformly elliptic,
$$ (F^{k})^{\star}(\cdot,0) \to G(\cdot,0) \quad \textrm{uniformly in every} \ K \subset \subset Sym(n)$$
for some uniformly elliptic functional $G(\cdot,0):Sym(n) \to \mathbb{R}$.
Now, for any $\varsigma \in C^{2}(\overline{B}_{1}^{+}(0',\iota_{\infty})) $,
we observe that
\begin{align} \label{addaf} \begin{split}
&|F_{\mu_{k}}^{k}(D^{2}\varsigma(x),x)-f_{k}(x)-G(D^{2}\varsigma(x),0)|
\\ & \le |F_{\mu_{k}}^{k}(D^{2}\varsigma(x),x)-F_{\mu_{k}}^{k}(D^{2}\varsigma(x),0)|+|f_{k}(x)|
\\& \ \hspace{-0.15em}+\hspace{-0.15em} | F_{\mu_{k}}^{k}(D^{2}\varsigma(x),0)\hspace{-0.15em}-\hspace{-0.15em}(F^{k})^{\star}(D^{2}\varsigma(x),0)|
\hspace{-0.15em}+\hspace{-0.15em}|(F^{k})^{\star}(D^{2}\varsigma(x),0)\hspace{-0.15em}- \hspace{-0.15em}G(D^{2}\varsigma(x),0)|
\\ & \le \bigg( \sup_{X \in Sym(n)} \frac{|F_{\mu_{k}}^{k}(X,x)-F_{\mu_{k}}^{k}(X,0)|}{||X||+1} \bigg)(||\varsigma||+1)+|f_{k}(x)|
\\& \ \hspace{-0.15em}+\hspace{-0.15em} | F_{\mu_{k}}^{k}(D^{2}\varsigma(x),0)\hspace{-0.15em}-\hspace{-0.15em}(F^{k})^{\star}(D^{2}\varsigma(x),0)|.
\end{split}
\end{align}
Similarly as in the proof of \cite[Lemma 3.1]{MR3957152}, we find that
\begin{align*}
&\int_{B_{1}^{+}(0',\iota_{\infty})} \bigg( \sup_{X \in Sym(n)}
\frac{|F_{\mu_{k}}^{k}(X,x)-F_{\mu_{k}}^{k}(X,0)|}{||X||+1} \bigg)^{n} dx \\ &
 \le C(n)\bigg\{\int_{B_{1}^{+}(0',\iota_{\infty})} \bigg( \sup_{X \in Sym(n)}
\frac{|F_{\mu_{k}}^{k}(X,x)-(F^{k})^{\star}(X,x)|}{||X||+1} \bigg)^{n} dx
 \\ & \quad + \sup_{X \in Sym(n)} \bigg( \frac{|F_{\mu_{k}}^{k}(X,0)-(F_{\mu_{k}})^{\star}(X,0)|}{||X||+1} \bigg)^{n} \hspace{-0.3em} + ||\sigma_{(F^{k})^{\star}}(x,0)||_{L^{n}(B_{1}^{+}(0',\iota_{\infty}))}^{n}\bigg\}.
\end{align*}
Then the right-hand side of \eqref{addaf} converges to $0$ in $L^{n}(B_{1}^{+}(0',\iota_{\infty}))$ as $k\to\infty$.
Combining \cite[Proposition 1.5]{MR2486925}, Arzel\`{a}-Ascoli criterion with our assumption, we obtain that $u_{\infty}$ and $v_{\infty}$ are solutions of
\begin{align*}
\left\{ \begin{array}{ll}
G(D^{2}u_{\infty},0) = 0 & \textrm{in $B_{1}^{+}(0',\iota_{\infty}), $}\\
u_{\infty}=\varphi_{\infty} & \textrm{on $\partial B_{1}^{+}(0',\iota_{\infty})$}.
\end{array} \right.
\end{align*}
By the uniqueness of the above problem, this is a contradiction to \eqref{add1} and we can complete the proof.
\end{proof}

The following $W^{2,\nu}$-type estimate is a building block to derive our desired result.
From now on, we write $\Theta = \Theta (u, B_{4}^{+})$ for simplicity.
\begin{lemma} \label{w2delta} Let $u$ be a function satisfying  $ u \in S(\lambda, \Lambda,  f) $ with $||u||_{L^{\infty}(B_{4}^{+})}\le 1$
and $\kint_{B_{6}^{+}}\psi(|f|,x) dx \le 1$.
Then for $\Theta = \Theta (u, B_{4}^{+})$,
we have $\Theta \in L^{\nu}(B_{4}^{+})$ with the following estimate
$$\bigg(\kint_{B_{4}^{+}}\Theta^{\nu} dx\bigg)^{1/\nu}\le C$$
for some $0<\nu<1$ and $C>0$ depending on $n,\lambda,\Lambda , L$ and $t_{0}$.
\end{lemma}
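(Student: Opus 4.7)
The plan is to reduce the statement to the interior and boundary $W^{2,\nu}$-estimates of Lemma~\ref{w2del} through three ingredients: a Jensen-type conversion of the modular bound on $f$ into a classical $L^n$ bound, a finite covering of $B_4^+$ by balls and half-balls on which the local estimates apply, and a paraboloid-extension trick relating the local $\Theta$ on each cover piece to the global $\Theta(u,B_4^+)$. For the first ingredient, I would work with the $x$-independent minorant $\varphi_{B_6^+}^-$, which inherits $(\textrm{aInc})_p$ from $\varphi$; applying Lemma~\ref{orljen} to this minorant (with exponent $1$ and function $|f|^n$) and then using $\psi(t,x)=\varphi(t^n,x)$, the normalization $\kint_{B_6^+}\psi(|f|,x)\,dx\le 1$, and (A0) yields
$$||f||_{L^n(B_6^+)}\le C(n,p,L,t_0).$$

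Next I fix a small radius $\rho\in(0,1)$ and cover $B_4^+$ by a finite family $\{V_i\}_{i=1}^N$, each $V_i$ being either an interior ball $B_\rho(z_i)\subset B_4^+$ (chosen when $z_i$ is at distance at least $\rho$ from $T_4\cup\partial B_4$) or a boundary half-ball $B_\rho^+(y_j)\subset B_4^+$ with $y_j\in T_{4-\rho}$ (chosen when the point lies in the strip $\{x_n<\rho\}$); only finitely many further pieces of slightly smaller radius are needed near the corner where $T_4$ meets $\partial B_4$. Since $V_i\subset B_4^+$ we have $||u||_{L^\infty(V_i)}\le 1$, so after a translation and the standard rescaling mapping $V_i$ to $B_1$ or $B_1^+$, the interior estimate Lemma~\ref{w2del}(a) or the boundary estimate Lemma~\ref{w2del}(b) applies to $u$ on $V_i$; each yields $\Theta(u,V_i')\in L^\nu(V_i')$ on a concentric shrunken set $V_i'$ of radius $\rho/2$, with the scale-invariant bound
$$||\Theta(u,V_i')||_{L^\nu(V_i')}\le C\bigl(||u||_{L^\infty(V_i)}+||f||_{L^n(V_i)}\bigr)\le C.$$

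The final ingredient is a paraboloid-extension argument converting $\Theta(u,V_i')$ into $\Theta(u,B_4^+)$ pointwise. If a concave paraboloid $P(x)=u(x_0)+\xi\cdot(x-x_0)-\tfrac{M}{2}|x-x_0|^2$ touches $u$ from below at $x_0\in V_i'$ on all of $V_i'$, then comparing $P$ with $u$ on the boundary of $V_i'$ and using $||u||_{L^\infty(B_4^+)}\le 1$ forces $|\xi|\le C(\rho)(1+M)$; hence the more concave paraboloid $\tilde P(x)=u(x_0)+\xi\cdot(x-x_0)-\tfrac{\tilde M}{2}|x-x_0|^2$ with $\tilde M=M+C(\rho)$ satisfies $\tilde P\le u$ throughout $B_4^+$. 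The analogous construction with convex paraboloids handles $\overline\Theta$, so
$$\Theta(u,B_4^+)(x_0)\le \Theta(u,V_i')(x_0)+C\qquad\text{for all }x_0\in V_i',$$
and integrating the $\nu$-th power over $B_4^+$ while summing over the finite cover closes the proof. The main obstacle is precisely this last step: although the extension trick is classical in the Caffarelli--Cabr\'e framework, one must verify it uniformly over the cover pieces, and the gradient bound on $|\xi|$ is more delicate for the boundary half-ball pieces, since $x_0$ can sit close to the flat face $T_4$ where the boundary of $V_i'$ consists of a hemisphere together with a flat disk rather than a full sphere.
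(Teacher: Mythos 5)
Your overall route matches the paper's: convert the modular bound on $f$ into an $L^{n}$ bound via the Jensen inequality of Lemma~\ref{orljen} (the paper applies it to $\psi^{-}$ using $(\textrm{aInc})_{n}$, which is equivalent to your application to $\varphi^{-}$ with exponent $1$ and the function $|f|^{n}$), then invoke the class-$S$ estimate of Lemma~\ref{w2del}. The paper's own proof is essentially only these two steps; it simply writes ``Applying Lemma~\ref{w2del}(b)'' and does not spell out the covering/extension bookkeeping that you elaborate, so your steps two and three are a legitimate unpacking of what the citation subsumes.

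There is, however, a fixable flaw in your extension step. As written you obtain $\Theta(u,V_i')\in L^{\nu}(V_i')$ and then try to extend the touching paraboloid at any $x_0\in V_i'$ to all of $B_4^+$. The constant $C(\rho)$ in the estimate $|\xi|\le C(\rho)(1+M)$ and in $\tilde M=M+C(\rho)$ really depends on $\operatorname{dist}(x_0,\partial V_i')$, which degenerates as $x_0\to\partial V_i'$; for $x$ just outside $V_i'$ and close to such an $x_0$, the inequality $\tilde P(x)\le u(x)$ is not forced by the size bound on $u$. The standard fix (and what the Caffarelli--Cabr\'{e}/Winter framework actually does) is to run the extension only for $x_0$ in a further-shrunken concentric piece $V_i''\subset\subset V_i'$ with $\operatorname{dist}(V_i'',\partial V_i')\ge c\rho>0$, and to choose the finite cover of $B_4^+$ to consist of the sets $V_i''$ rather than the $V_i'$. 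With that buffer, the extension constant is uniform and the argument closes exactly as you describe; the boundary/corner issue you raise in the last sentence is genuine but secondary and is handled the same way, by keeping all touching points a fixed distance inside the half-ball on which Lemma~\ref{w2del}(b) is applied.
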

\begin{proof}
We first observe that
\begin{align*}
\bigg(\kint_{B_{4}^{+}}|f|^{n} dx\bigg)^{1/n}\le C
(\psi^{-})^{-1}\bigg( \kint_{B_{4}^{+}}\psi^{-}(|f|)dx\bigg)
\end{align*} for some $C=C(n,L)$ by using Lemma \ref{orljen}
since $\psi^{-}$ satisfies $\textrm{(aInc)}_{n}$. We can also see that
\begin{align*}
\kint_{B_{6}^{+}}\psi^{-}(|f|)dx \le \kint_{B_{6}^{+}}\psi(|f|,x)dx
\le 1.
\end{align*}
Therefore, we have
\begin{align*}
\bigg(\kint_{B_{6}^{+}}|f|^{n} dx\bigg)^{1/n} \le C(n,L)\big[(\varphi^{-})^{-1}(1)\big]^{1/n}\le C(n,L,t_0)
\end{align*}
by using the condition (A0). Applying Lemma \ref{w2del} (b), we have
\begin{align*} \bigg(\kint_{B_{4}^{+}}\Theta^{\nu} dx\bigg)^{1/\nu}&\le
C \bigg(||u||_{L^{\infty}(B_{6}^{+})} + \bigg(\kint_{B_{6}^{+}}|f|^{n} dx\bigg)^{1/n} \bigg)
\\ & \le C(n, \lambda, \Lambda, L, t_{0}).
\end{align*}
This completes the proof.
\end{proof}

Now we return to the proof of the main theorem.
We set
\begin{align} \label{lam0def} \lambda_{0} = \kint_{B_{2}^{+}}\psi(\Theta, x)^{\frac{\nu}{nq}} dx+
\frac{1}{\delta} \bigg(1+\kint_{B_{2}^{+}}\psi(|f|, x)^{\frac{1}{p}} dx\bigg)^{\frac{\nu p}{nq}}\ge 1,
\end{align}
where $\delta>0$ is to be determined.
Then, we can observe that
\begin{align*}
\kint_{B_{2}^{+}}\psi(\Theta, x)^{\frac{\nu}{nq}} dx \le \kint_{B_{2}^{+}}\psi_{B_{2}^{+}}^{+}(\Theta)^{\frac{\nu}{nq}} dx \le C \psi_{B_{2}^{+}}^{+}\bigg( \bigg(\kint_{B_{2}^{+}}\Theta^{\nu} dx\bigg)^{\frac{1}{nq}} \bigg) \le C_{0},    
\end{align*} 
where $C_0$ depends on $n,\lambda,\Lambda,L,q$ and $t_{0}$ by \cite[Lemma 3.1]{MR4288665} 
since $\psi$ satisfies $\textrm{(aDec)}_{nq}$.
By H\"{o}lder inequality, we have
$$ \bigg(\kint_{B_{2}^{+}}\psi(|f|, x)^{\frac{1}{p}} dx\bigg)^{\frac{\nu p}{nq}} \le \bigg(\kint_{B_{2}^{+}}\psi(|f|, x)dx\bigg)^{\frac{\nu}{nq}} \le 1.$$
This implies $\lambda_{0} \le C_{0}/\delta$.

We also define
\begin{align*}&E(s,\lambda):=\{ x\in B_{s}^{+}: \psi(\Theta,x)^{\frac{\nu}{nq}} > \lambda \},\\&
H(s,\lambda):=\{ x\in B_{s}^{+}: \psi(|f|,x)^{\frac{\nu}{nq}} > \lambda \}.
\end{align*}
Fix $s_{1}$ and $s_{2}$ with $1\le s_{1} < s_{2} \le 2$.

Then we can find a covering of upper level sets by using a stopping time argument (see also \cite{MR2352517}). 
\begin{lemma} \label{stta}
Let $\lambda \ge \alpha \lambda_{0}$ with $\alpha = \big(\frac{120}{s_{2}-s_{1}} \big)^{n}$.
Then there exists a disjoint family $\{ B_{\tau_{k}}^{+}(y^{k}) \}_{k=1}^{\infty}$ with $y^{k} \in E(s_{1},\lambda)$ and $\tau_{k} \in (0, \frac{s_{2}-s_{1}}{60})$
such that
$$ E(s_{1},\lambda) \subset \bigcup_{k=1}^{\infty}B_{5\tau_{k}}^{+}(y^{k}),$$
and $\mathcal{H}_{y^{k}}(\tau_{k})=\lambda $ and $\mathcal{H}_{y^{k}}(\tau)<\lambda $ for any $\tau \in (\tau_{k},s_{2}-s_{1}]$,
where
$$\mathcal{H}_{y}(\tau) := \kint_{B_{\tau}^{+}(y)}\psi(\Theta, x)^{\frac{\nu}{nq}} dx+
\frac{1}{\delta} \bigg(\kint_{B_{\tau}^{+}(y)}\psi(|f|, x)^{\frac{1}{p}} dx\bigg)^{\frac{\nu p}{nq}} .$$
\end{lemma}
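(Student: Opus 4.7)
My plan is to run a stopping time argument based on the continuity of $\tau\mapsto \mathcal{H}_y(\tau)$ in $\tau$, together with two opposite size estimates (one small-$\tau$, one large-$\tau$), and then apply a Vitali-type covering lemma to extract the disjoint subfamily.

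The first step is to verify that, for every Lebesgue point $y\in E(s_1,\lambda)$ of both $\psi(\Theta,\cdot)^{\nu/(nq)}$ and $\psi(|f|,\cdot)^{1/p}$, the function $\mathcal{H}_y(\tau)$ exceeds $\lambda$ for all sufficiently small $\tau>0$. This is an immediate consequence of the Lebesgue differentiation theorem, since $\mathcal{H}_y(\tau)\to \psi(\Theta(y),y)^{\nu/(nq)}+\frac{1}{\delta}\psi(|f(y)|,y)^{\nu/(nq)}$ as $\tau\to 0^+$, which is greater than $\lambda$ simply because $y\in E(s_1,\lambda)$.

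Next I would establish the reverse bound on the other end: for $\tau\in [(s_2-s_1)/60,\, s_2-s_1]$ the average $\mathcal{H}_y(\tau)$ drops below $\lambda$. Since $y\in B_{s_1}^+$ and $\tau\le s_2-s_1$, I have $B_\tau^+(y)\subset B_{s_2}^+\subset B_2^+$, and using the half-ball volume lower bound $|B_\tau^+(y)|\ge c_n\tau^n$ I get, for any nonnegative $g$,
$$\kint_{B_\tau^+(y)} g\,dx \le \frac{|B_2^+|}{c_n\tau^n}\kint_{B_2^+} g\,dx.$$
Applying this with $g=\psi(\Theta,\cdot)^{\nu/(nq)}$ and $g=\psi(|f|,\cdot)^{1/p}$ and recalling the definition of $\lambda_0$ in \eqref{lam0def} (using also $\nu p/(nq)\le 1$ to absorb the $1/\delta$ factor on the $f$-term into $\lambda_0$), I obtain a bound of the form $\mathcal{H}_y(\tau)\le C(n)\lambda_0/(s_2-s_1)^n$ on the above range of $\tau$. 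The hypothesis $\lambda\ge \alpha\lambda_0=(120/(s_2-s_1))^n\lambda_0$ is then designed precisely so that the factor $2^n$ in $(120/60)^n$ defeats the dimensional constant $C(n)$, forcing $\mathcal{H}_y(\tau)<\lambda$ throughout $[(s_2-s_1)/60, s_2-s_1]$.

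Now the map $\tau\mapsto \mathcal{H}_y(\tau)$ is continuous on $(0,s_2-s_1]$ (averages against the continuously varying volume of $B_\tau^+(y)$), it is $>\lambda$ near $0$, and $<\lambda$ on $[(s_2-s_1)/60, s_2-s_1]$. I therefore set $\tau_k:=\sup\{\tau\in (0,s_2-s_1]:\mathcal{H}_y(\tau)=\lambda\}$, which lies in $(0,(s_2-s_1)/60)$ and satisfies both $\mathcal{H}_y(\tau_k)=\lambda$ and $\mathcal{H}_y(\tau)<\lambda$ for every $\tau\in(\tau_k,s_2-s_1]$, as required. Finally, the collection $\{B_{\tau_k}(y)\}_{y\in E(s_1,\lambda)}$ (after discarding a null set of non-Lebesgue points) has uniformly bounded radii, so the Vitali covering lemma yields a countable, pairwise disjoint subfamily $\{B_{\tau_k}(y^k)\}_{k\ge 1}$ whose fivefold dilates cover the union; intersecting with $\mathbb{R}^n_+$ gives the claimed cover of $E(s_1,\lambda)$ by $B_{5\tau_k}^+(y^k)$.

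The main technical nuisance will be bookkeeping the dimensional constants in the large-$\tau$ estimate so that the fixed factor $120$ in $\alpha$ suffices; beyond that, everything reduces to Lebesgue differentiation, a routine volume comparison, and a standard Vitali extraction.
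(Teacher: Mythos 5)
Your proposal is correct and reproduces the paper's argument step for step: a half-ball volume comparison on $[(s_2-s_1)/60,\,s_2-s_1]$ to make $\mathcal{H}_y(\tau)$ fall below $\lambda$, Lebesgue differentiation to push $\mathcal{H}_y(\tau)$ above $\lambda$ as $\tau\to 0^+$, continuity of $\tau\mapsto\mathcal{H}_y(\tau)$ to pick the stopping radius, and Vitali's covering lemma to extract the disjoint subfamily. One small precision on your bookkeeping remark: the strict inequality $\mathcal{H}_y(\tau)<\lambda$ on that range does not come from a factor $2^n$ ``defeating'' a slack dimensional constant --- the comparison $|B_2^+|/|B_\tau^+(y)|\le (2/\tau)^n$ is sharp and, at $\tau=(s_2-s_1)/60$, produces exactly the prefactor $\alpha=(120/(s_2-s_1))^n$ with no room to spare; the strictness is instead supplied by the extra ``$+1$'' inside the $f$-term of $\lambda_0$ in \eqref{lam0def}, which makes $\lambda_0$ strictly larger than the braced quantity your volume comparison actually bounds.
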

\begin{proof}
We first observe that $ B_{\tau}^{+}(y)\subset B_{2}^{+}$ for any $0<\tau \le s_{2}-s_{1}$.
Therefore, for any $\frac{s_{2}-s_{1}}{60}\le\tau\le s_{2}-s_{1} $, we have
\begin{align*}
\mathcal{H}_{y}(\tau)& \le \bigg( \frac{2}{\tau} \bigg)^{n} \bigg\{
 \kint_{B_{2}^{+}}\psi(\Theta, x)^{\frac{\nu}{nq}} dx+
\frac{1}{\delta} \bigg(\kint_{B_{2}^{+}} \psi(|f|, x)^{\frac{1}{p}} dx\bigg)^{\frac{\nu p}{nq}}  \bigg\}
\\ & \le \bigg( \frac{120}{s_{2}-s_{1}} \bigg)^{n} \bigg\{
 \kint_{B_{2}^{+}}\psi(\Theta, x)^{\frac{\nu}{nq}} dx+
\frac{1}{\delta} \bigg(\kint_{B_{2}^{+}}\psi(|f|, x)^{\frac{1}{p}} dx\bigg)^{\frac{\nu p}{nq}}  \bigg\}
\\ & \le \alpha \lambda_{0}
\le \lambda.
\end{align*}

By Lebesgue's theorem, we deduce that
$$\lim_{\tau\to 0^+} \mathcal{H}_{y}(\tau) \ge \lim_{\tau\to 0^+} \kint_{B_{\tau}^{+}(y)} \psi(\Theta, x)^{\frac{\nu}{nq}} dx> \lambda  $$
for every $y \in E(s_{1},\lambda) $.
Thus, for any $y \in E(s_{1},\lambda)$, we can find
$ \tau_{y} =\tau(y) \in (0,\frac{ s_{2}-s_{1}}{60})$ such that
$$\mathcal{H}_{y}(\tau_{y})=\lambda \quad \textrm{and} \quad \mathcal{H}_{y}(\tau)<\lambda\quad \textrm{for every}
\ \tau \in (\tau_{y},  s_{2}-s_{1}]. $$
Now we can choose such $y^{k}$ and $\tau_{k}$ by using Vitali's covering lemma.
\end{proof}

From the above lemma, we can obtain the following estimate.
\begin{lemma} \label{ehest}
Under assumptions in Lemma \ref{stta}, we have
\begin{align*}
|B_{\tau_{k}}^{+}(y^{k})|\le \frac{4}{\lambda} \int_{E_{k}(\lambda/4)}\psi(\Theta, x)^{\frac{\nu}{nq}} dx +
 \bigg( \frac{4}{\lambda \delta}\bigg)^{\frac{nq}{\nu p}}\int_{H_{k}(\lambda \delta/4)}\psi(|f|, x)^{\frac{1}{p}} dx ,
\end{align*}
where for $\lambda>0$,
\begin{align*}&E_{k}(\lambda)=\{ x\in B_{\tau_k}^{+}(y^{k}): \psi(\Theta,x)^{\frac{\nu}{nq}} \ge \lambda \},\\&
H_{k}(\lambda)=\{ x\in B_{\tau_k}^{+}(y^{k}): \psi(|f|,x)^{\frac{\nu}{nq}} \ge \lambda \}.
\end{align*}
\end{lemma}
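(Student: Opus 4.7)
The plan is to extract the claimed volume bound directly from the equality $\mathcal{H}_{y^k}(\tau_k) = \lambda$ furnished by the stopping-time argument in Lemma \ref{stta}. By construction,
\begin{equation*}
\kint_{B_{\tau_k}^+(y^k)}\psi(\Theta, x)^{\frac{\nu}{nq}} dx + \frac{1}{\delta}\left(\kint_{B_{\tau_k}^+(y^k)}\psi(|f|, x)^{\frac{1}{p}} dx\right)^{\frac{\nu p}{nq}} = \lambda,
\end{equation*}
so at least one of the two summands on the left must be bounded below by $\lambda/2$. I would split the argument into the two cases accordingly and, in each case, apply a Chebyshev-type truncation at the appropriate threshold to separate the integral over the upper level set ($E_k(\lambda/4)$ or $H_k(\lambda\delta/4)$) from the part that can be absorbed. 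Since both terms appearing on the right-hand side of the desired inequality are nonnegative, proving the bound in either single case already suffices.

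In the first case, where $\kint_{B_{\tau_k}^+(y^k)}\psi(\Theta, x)^{\nu/(nq)} dx \ge \lambda/2$, multiplying through by the measure of the ball and splitting the integral at level $\lambda/4$ gives
\begin{equation*}
\tfrac{\lambda}{2}|B_{\tau_k}^+(y^k)| \le \int_{E_k(\lambda/4)}\psi(\Theta,x)^{\frac{\nu}{nq}} dx + \int_{B_{\tau_k}^+(y^k)\setminus E_k(\lambda/4)}\psi(\Theta,x)^{\frac{\nu}{nq}} dx.
\end{equation*}
On the complement the integrand is strictly less than $\lambda/4$, so that piece contributes at most $(\lambda/4)|B_{\tau_k}^+(y^k)|$ and is absorbed on the left, producing $|B_{\tau_k}^+(y^k)| \le (4/\lambda)\int_{E_k(\lambda/4)}\psi(\Theta,x)^{\nu/(nq)} dx$, which is the first term of the claim.

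In the second case, the assumption unwinds (raise to the power $nq/(\nu p)$ and clear averages) to
\begin{equation*}
\int_{B_{\tau_k}^+(y^k)}\psi(|f|,x)^{\frac{1}{p}} dx \ge (\lambda\delta/2)^{\frac{nq}{\nu p}} |B_{\tau_k}^+(y^k)|,
\end{equation*}
and on $B_{\tau_k}^+(y^k)\setminus H_k(\lambda\delta/4)$ we have $\psi(|f|,x)^{\nu/(nq)} < \lambda\delta/4$, hence $\psi(|f|,x)^{1/p} < (\lambda\delta/4)^{nq/(\nu p)}$. Splitting and absorbing as before yields
\begin{equation*}
\bigl[(\lambda\delta/2)^{\frac{nq}{\nu p}} - (\lambda\delta/4)^{\frac{nq}{\nu p}}\bigr]|B_{\tau_k}^+(y^k)| \le \int_{H_k(\lambda\delta/4)}\psi(|f|,x)^{\frac{1}{p}} dx.
\end{equation*}
Because $nq/(\nu p) > 1$ (since $\nu \in (0,1)$, $n \ge 1$, $p \le q$), the bracket dominates $(\lambda\delta/4)^{nq/(\nu p)}$, and rearrangement gives the second half of the claimed bound. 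There is no genuine obstacle here; the only thing to watch is the bookkeeping of the exponent $\nu p/(nq)$ when moving between averaged and non-averaged integrals for the $f$-piece, and the inequality $nq/(\nu p) \ge 1$ needed so that the truncation on $H_k$ absorbs cleanly.
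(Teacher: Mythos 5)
Your proof is correct and follows essentially the same route as the paper: invoke the equality $\mathcal{H}_{y^k}(\tau_k)=\lambda$, split into the two cases where one summand is at least $\lambda/2$, and in each case perform the level-set truncation and absorb the sublevel piece using $nq/(\nu p)\ge 1$ so that $(\lambda\delta/2)^{nq/(\nu p)}-(\lambda\delta/4)^{nq/(\nu p)}\ge(\lambda\delta/4)^{nq/(\nu p)}$. The bookkeeping of the exponent and the observation that proving the bound in either single case suffices (by nonnegativity of both right-hand-side terms) are exactly what the paper does.
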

\begin{proof}
By Lemma \ref{stta}, there exist $y^{k}$ and $\tau_{k}$ such that $\mathcal{H}_{y^{k}}(\tau_{k})=\lambda $ and $\mathcal{H}_{y^{k}}(\tau)<\lambda $ for any $\tau \in (\tau_{k}, s_{2}-s_{1}]$. Then
$$ \lambda = \kint_{B_{\tau_{k}}^{+}(y^{k})}\psi(\Theta, x)^{\frac{\nu}{nq}} dx+
\frac{1}{\delta} \bigg(\kint_{B_{\tau_{k}}^{+}(y^{k})}\psi(|f|, x)^{\frac{1}{p}} dx\bigg)^{\frac{\nu p}{nq}}.$$
Then we can observe that
\begin{align*} \frac{\lambda}{2} \le  \kint_{B_{\tau_{k}}^{+}(y^{k})}\psi(\Theta, x)^{\frac{\nu}{nq}} dx  \quad \textrm{or} \quad \frac{\lambda\delta}{2}
\le \bigg( \kint_{B_{\tau_{k}}^{+}(y^{k})}\psi(|f|, x)^{\frac{1}{p}} dx \bigg)^{\frac{\nu p}{nq}}.
\end{align*}

Now we have
\begin{align*}
\frac{\lambda}{2}|B_{\tau_{k}}^{+}(y^{k})| & \le  \int_{B_{\tau_{k}}^{+}(y^{k})}\psi(\Theta, x)^{\frac{\nu}{nq}} dx
\\ & \le \int_{E_{k}(\lambda/4)}\psi(\Theta, x)^{\frac{\nu}{nq}} dx + \frac{\lambda}{4}|B_{\tau_{k}}^{+}(y^{k})|
\end{align*}
and this implies
$$ |B_{\tau_{k}}^{+}(y^{k})|  \le  \frac{4}{\lambda} \int_{E_{k}(\lambda/4)}\psi(\Theta, x)^{\frac{\nu}{nq}} dx . $$

Similarly, we can also obtain
\begin{align*}
\bigg(\frac{\lambda \delta}{2} \bigg)^{\frac{nq}{\nu p}}|B_{\tau_{k}}^{+}(y^{k})|  \le
\int_{H_{k}(\lambda \delta/4)}\psi(|f|, x)^{\frac{1}{p}} dx +
\bigg(\frac{\lambda \delta}{4} \bigg)^{\frac{nq}{\nu p}}|B_{\tau_{k}}^{+}(y^{k})|
\end{align*}
and this estimate yields
$$|B_{\tau_{k}}^{+}(y^{k})|  \le \bigg( \frac{4}{\lambda\delta}\bigg)^{\frac{nq}{\nu p}}\int_{H_{k}(\lambda\delta/4)}\psi(|f|, x)^{\frac{1}{p}} dx  $$
since $\lambda\ge \alpha\lambda_0>1$ and $\nu p/nq <1$.
Now we can complete the proof.
\end{proof}

Now we define
$$\psi_{k}^{+}(t)=\sup_{x\in B_{5\tau_{k}}^{+}(y^{k})}\psi(t,x) \quad
\textrm{and} \quad \psi_{k}^{-}(t)=\inf_{x\in B_{5\tau_{k}}^{+}(y^{k})}\psi(t,x).$$
Then we can also observe the following result.
\begin{lemma} \label{lemnon}
Under assumptions in Lemma \ref{stta}, the following hold:
\begin{itemize}
\item[(a)] If $B_{30\tau_{k}}(y^{k}) \subset B_{s_{2}}^{+}$, we have
\begin{align*}
\bigg(\kint_{B_{30\tau_{k}}(y^{k})}\Theta^{\frac{\nu p}{q}} dx \bigg)^{^{\frac{q}{\nu p}}} \le C (\psi_{k}^{+})^{-1}\big(\lambda^{\frac{nq}{\nu}} \big)
\end{align*}and
\begin{align*}
\bigg(\kint_{B_{30\tau_{k}}(y^{k})}|f|^{n} dx \bigg)^{^{\frac{1}{n}}} \le C\delta^{\frac{q}{\nu p}} (\psi_{k}^{+})^{-1}\big(\lambda^{\frac{nq}{\nu}} \big)
\end{align*}
for some $C=C(n,\lambda,\Lambda,p,q,L,t_{0})>0$.
\item[(b)]  If $B_{30\tau_{k}}(y^{k}) \not\subset B_{s_{2}}^{+}$, we have
\begin{align*}
\bigg(\kint_{B_{60\tau_{k}}^{+}(\tilde{y}^{k}) }\Theta^{\frac{\nu p}{q}} dx \bigg)^{^{\frac{q}{\nu p}}} \le C (\psi_{k}^{+})^{-1}\big(\lambda^{\frac{nq}{\nu}} \big)
\end{align*}and
\begin{align*}
\bigg(\kint_{B_{60\tau_{k}}^{+}(\tilde{y}^{k}) }|f|^{n} dx \bigg)^{^{\frac{1}{n}}} \le C\delta^{\frac{q}{\nu p}} (\psi_{k}^{+})^{-1}\big(\lambda^{\frac{nq}{\nu}} \big)
\end{align*}
for some $C=C(n,\lambda,\Lambda,p,q ,L,t_{0})>0$, where $\tilde{y}^{k}=\big( (y^{k})',0 \big)$.
\end{itemize}
\end{lemma}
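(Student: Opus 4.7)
The strategy is to combine the stopping-time bounds from Lemma \ref{stta} with a tailored Jensen-type inequality, and then pass from $\psi_B^-$ on the ambient ball to $\psi_k^+$ on $B_{5\tau_k}^+(y^k)$ by means of condition (A1-$\varphi^-$). In case (a), since $B_{30\tau_k}(y^k)\subset B_{s_2}^+$, the half-ball coincides with the full ball and $30\tau_k\in(\tau_k,s_2-s_1]$, so Lemma \ref{stta} applied at $\tau=30\tau_k$ yields directly
$$\kint_{B_{30\tau_k}(y^k)}\psi(\Theta,x)^{\nu/(nq)}\,dx<\lambda,\qquad\bigg(\kint_{B_{30\tau_k}(y^k)}\psi(|f|,x)^{1/p}\,dx\bigg)^{\nu p/(nq)}<\delta\lambda.$$
In case (b), the failure of the inclusion together with $y^k\in B_{s_1}^+$ and $\tau_k<(s_2-s_1)/60$ forces $(y^k)_n<30\tau_k$, so $B_{60\tau_k}^+(\tilde y^k)$ lies inside a ball centred at $y^k$ of comparable volume; applying the stopping time at a suitable $\tau\in(\tau_k,s_2-s_1]$ and absorbing the volume ratio into the constant gives the analogous averaged bounds over $B_{60\tau_k}^+(\tilde y^k)$.

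Next I would carry out a Jensen-type conversion. Writing $B$ for the relevant ball and $\psi_B^-(t):=\inf_{x\in B}\psi(t,x)$, the key observation is that the auxiliary function $\Psi(t):=\psi_B^-(t^{q/(\nu p)})^{\nu/(nq)}$ satisfies (aInc)$_1$, because $\psi$ satisfies (aInc)$_{np}$ (so that $\Psi(t)/t$ is almost increasing). Lemma \ref{orljen} then delivers
$$\Psi\bigg(C\kint_B\Theta^{\nu p/q}\,dx\bigg)\le\kint_B\psi_B^-(\Theta)^{\nu/(nq)}\,dx\le\kint_B\psi(\Theta,x)^{\nu/(nq)}\,dx<\lambda,$$
and unwinding the definition of $\Psi$ yields $(\kint_B\Theta^{\nu p/q}\,dx)^{q/(\nu p)}\le C(\psi_B^-)^{-1}(\lambda^{nq/\nu})$. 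An entirely parallel application with the auxiliary function $t\mapsto\psi_B^-(t)^{1/p}$, which satisfies (aInc)$_n$ (again from (aInc)$_{np}$ for $\psi$), transforms the $f$-bound into $(\kint_B|f|^n\,dx)^{1/n}\le C(\psi_B^-)^{-1}((\delta\lambda)^{nq/\nu})$.

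The final step is to replace $(\psi_B^-)^{-1}$ by $(\psi_k^+)^{-1}$ as in the statement. Since $B\supset B_{5\tau_k}^+(y^k)$, we have $\psi_k^+\le\psi_B^+$; combined with $\psi_B^+(t_0t)\le\psi_B^-(t)$ from (A1-$\varphi^-$) on the ball $B$, this gives $\psi_k^+(t_0t)\le\psi_B^-(t)$ in the admissible range, hence $(\psi_B^-)^{-1}(\mu)\le t_0^{-1}(\psi_k^+)^{-1}(\mu)$, which closes the $\Theta$-estimate. For the $f$-estimate the additional factor $\delta^{nq/\nu}$ inside $(\psi_B^-)^{-1}$ translates, via the growth conditions (aInc)$_{np}$ and (aDec)$_{nq}$ applied to $\psi_B^-$ and $\psi_k^+$ together with $\delta\le 1$, into the prefactor $\delta^{q/(\nu p)}$ required. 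The main obstacle will be verifying that the arguments $t\approx(\psi_k^+)^{-1}(\lambda^{nq/\nu})$ fall within the admissible range $[1,(\psi^-)^{-1}(|B|^{-1})]$ in (A1-$\varphi^-$); this reduces to using $\lambda\ge\lambda_0\ge 1$ together with the a priori bound $\lambda_0\le C_0/\delta$ derived after \eqref{lam0def} and a suitably small choice of $\delta$.
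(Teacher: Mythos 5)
Your overall strategy is the same as the paper's: use the stopping-time bounds from Lemma \ref{stta}, convert via a tailored Jensen inequality, and pass from an infimum-type Orlicz function to $\psi_k^+$ via (A1-$\varphi^-$). The Jensen step (checking $\textrm{(aInc)}_1$ for $t\mapsto\psi_B^-(t^{q/(\nu p)})^{\nu/(nq)}$, respectively $\textrm{(aInc)}_n$ for $t\mapsto\psi_B^-(t)^{1/p}$, from $\textrm{(aInc)}_{np}$ for $\psi$) and the reduction of case (b) to case (a) by the inclusion $B_{30\tau_k}^+(y^k)\subset B_{60\tau_k}^+(\tilde y^k)\subset B_{s_2}^+$ are fine.

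The gap is exactly in the step you flag as ``the main obstacle,'' and your proposed resolution of it does not work. To apply (A1-$\varphi^-$) on the relevant ball you must place the argument in the admissible range $[1,(\varphi_\Omega^-)^{-1}(|B|^{-1})]$. The quantity that has to be checked is not controlled by $\lambda_0$ or by $\lambda_0\le C_0/\delta$: the lemma must hold for \emph{all} $\lambda\ge\alpha\lambda_0$, and $(\psi_B^-)^{-1}(\lambda^{nq/\nu})$ is unbounded in $\lambda$, while for a fixed stopping ball $B$ the upper endpoint $(\varphi_\Omega^-)^{-1}(|B|^{-1})$ is a fixed number. (The stopping radius $\tau_k$ does shrink as $\lambda$ grows, but there is no a priori reason these two effects balance, and your sketch gives no argument for it.) What actually closes the range is a uniform a priori bound on the averaged Hessian itself: the paper invokes Lemma \ref{olbdest} (the Orlicz Hessian estimate for the constant Orlicz function $\psi^-$) to get $\kint_{B_2^+}\varphi^-(\Theta^n)\,dx\le C$, hence, after enlarging,
$$
\bigg(\kint_{B_{30\tau_k}(y^k)}\Theta^{\nu p/q}\,dx\bigg)^{nq/(\nu p)}\le\kint_{B_{30\tau_k}(y^k)}\Theta^n\,dx\le C_0\,(\varphi^-)^{-1}\big(|B_{5\tau_k}|^{-1}\big),
$$
which directly places the argument in the admissible range for (A1-$\varphi^-$) independently of $\lambda$. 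This use of Lemma \ref{olbdest} is the decisive ingredient that your proposal omits; without it the passage from $(\psi_B^-)^{-1}$ (or $(\psi_k^-)^{-1}$) to $(\psi_k^+)^{-1}$ is not justified, and the entire conclusion fails for large $\lambda$. An analogous a priori bound, coming from the normalization $\kint_{B_6^+}\psi(|f|,x)\,dx\le 1$, is needed to close the range in the $f$-estimate, and is likewise missing from your sketch.
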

\begin{proof} 
We first consider the case $B_{30\tau_{k}} (y^{k})\subset B_{s_{2}}^{+}$.
Since $H_{y}^k(\tau) < \lambda$ for any $\tau \in [\tau_{k},  s_{2}-s_{1}]$ from Lemma \ref{stta},
we have
$$\kint_{B_{30\tau_{k}}(y^{k})}\psi(\Theta, x)^{\frac{\nu}{nq}} dx  \le \lambda
\quad \textrm{and} \quad \bigg(\kint_{B_{30\tau_{k}}(y^{k})}\psi(|f|, x)^{\frac{1}{p}} dx\bigg)^{\frac{\nu p}{nq}} \le \lambda\delta . $$

Setting $\theta (t)=\psi_{k}^{-}\big(t^{\frac{q}{\nu p}}\big)^{\frac{\nu}{nq}}$,
we see that $\theta$ satisfies $\textrm{(aInc)}_{1}$. Hence, by Lemma \ref{orljen},
we have
\begin{align*}
\psi_{k}^{-}\bigg( C \bigg( \kint_{B_{30\tau_{k}}(y^{k})} \Theta^{\frac{\nu p}{q}}dx
\bigg)^{\frac{q}{\nu p}} \bigg) &\le
\bigg( \kint_{B_{30\tau_{k}}(y^{k})} \psi_{k}^{-}(\Theta)^{\frac{\nu}{nq}} dx \bigg)^{\frac{nq}{\nu}}
\\ & \le \bigg( \kint_{B_{30\tau_{k}}(y^{k})} \psi(\Theta,x)^{\frac{\nu}{nq}} dx \bigg)^{\frac{nq}{\nu}} \\ & \le \lambda^{\frac{nq}{\nu}},
\end{align*}
where $C$ depends on $n$.

We also observe that
\begin{align*}
 \bigg( \kint_{B_{30\tau_{k}}(y^{k})}  \Theta^{\frac{\nu p}{q}}dx \bigg)^{\frac{nq}{\nu p}}
& \le \kint_{B_{30\tau_{k}}(y^{k})} \Theta^n dx
\\& \le C(\varphi^{-})^{-1}\bigg( \kint_{B_{30\tau_{k}}(y^{k})}
 \varphi^{-}( \Theta^n) dx \bigg) \\ & \le C_0(\varphi^{-})^{-1} \bigg(\frac{1}{|B_{5\tau_{k}}|} \bigg)
\end{align*} for some $C_0=C_0(n,\lambda,\Lambda,p,q,L,t_{0})>0$
from Lemma \ref{olbdest}.
Thus,
\begin{align*}
\varphi_{k}^{+} \bigg(\bigg( \kint_{B_{30\tau_{k}}(y^{k})} \Theta^{\frac{\nu p}{q}}dx \bigg)^{\frac{nq}{\nu p}} \bigg)&\le  C\bigg( \varphi_{k}^{+} \bigg( \frac{1}{C_0}\bigg( \kint_{B_{30\tau_{k}}(y^{k})} \Theta^{\frac{\nu p}{q}}dx \bigg)^{\frac{nq}{\nu p}}\bigg)  +1\bigg)
\\ & \le   C\bigg( \varphi_{k}^{-} \bigg(\bigg( \kint_{B_{30\tau_{k}}(y^{k})} \Theta^{\frac{\nu p}{q}}dx \bigg)^{\frac{nq}{\nu p}}\bigg)  +1\bigg)
\\ & = C\bigg( \psi_{k}^{-} \bigg(\bigg( \kint_{B_{30\tau_{k}}(y^{k})} \Theta^{\frac{\nu p}{q}}dx \bigg)^{\frac{q}{\nu p}}\bigg)  +1\bigg)
\\ & \le C\lambda^{\frac{nq}{\nu}},
\end{align*}
where $C=C(n,\lambda,\Lambda,p,q,L,t_{0})>0$, which implies
$$\bigg( \kint_{B_{30\tau_{k}}(y^{k})} \Theta^{\frac{\nu p}{q}}dx \bigg)^{\frac{q}{\nu p}}\le
C (\psi_k^{+})^{-1}(\lambda^{\frac{nq}{\nu}}).$$
Note that we have used the assumption (A1-$\varphi^{-}$) to derive the second inequality.

Repeating a similar calculation, we finally obtain
\begin{align*}
\psi_{k}^{+}\bigg(\bigg(\kint_{B_{30\tau_{k}}(y^{k})}|f|^{n} dx \bigg)^{^{\frac{1}{n}}} \bigg) \le
C\big( \lambda\delta \big)^{\frac{nq}{\nu}}
\end{align*}
and this yields
$$\bigg(\kint_{B_{30\tau_{k}}(y^{k})}|f|^{n} dx \bigg)^{^{\frac{1}{n}}} \le C \delta^{\frac{q}{\nu p}}
(\psi_{k}^{+})^{-1}\big( \lambda^{\frac{nq}{\nu}} \big),$$
where $C=C(n,\lambda,\Lambda,p,q,L,t_{0})>0$. 

Next we consider the case $B_{30\tau_{k}}(y^{k}) \not\subset B_{s_{2}}^{+}$.
Observe that $|y^{k}-\tilde{y}^{k}|=|(y^{k})_{n}| < 30\tau_{k}$, $60\tau_{k} \le  s_{2}-s_{1} \le 1$ and
$y^{k} \in B_{s_{1}}^{+}$. Then we see that
$$B_{30\tau_{k}}^{+}(y^{k})\subset B_{60\tau_{k}}^{+}(\tilde{y}^{k}) \quad \textrm{and} \quad      B_{60\tau_{k}}^{+}(\tilde{y}^{k})\subset B_{s_1+60\tau_{k}}^{+} \subset  B_{s_{2}}^{+}.$$
Thus, by using a similar argument in the first case, we obtain
\begin{align*}
\bigg(\kint_{B_{60\tau_{k}}^{+}(\tilde{y}^{k}) }\Theta^{\frac{\nu p}{q}} dx \bigg)^{^{\frac{q}{\nu p}}} \le C (\psi_{k}^{+})^{-1}\big(\lambda^{\frac{nq}{\nu}} \big)
\end{align*}and
\begin{align*}
\bigg(\kint_{B_{60\tau_{k}}^{+}(\tilde{y}^{k}) }|f|^{n} dx \bigg)^{^{\frac{1}{n}}} \le C\delta^{\frac{q}{\nu p}} (\psi_{k}^{+})^{-1}\big(\lambda^{\frac{nq}{\nu}} \big)
\end{align*}
for some $C=C(n,\lambda,\Lambda,p,q,L,t_{0})>0$ by using the result of Lemma \ref{olbdest}.
\end{proof}

From the above results, we can deduce the following lemma.
This lemma can be regarded as Hessian estimates for a solution to the limiting problem of 
\begin{align}\label{acsc}
\left\{ \begin{array}{ll}
F_{\mu}(D^{2}u,x) = f & \textrm{in $B_{6}^{+}, $}\\
u=0 & \textrm{on $T_{6}$,} 
\end{array} \right.
\end{align}
and the difference between them and our original solutions. 
\begin{lemma} \label{thetaest}Under the assumption as above, the following hold:
\begin{itemize}
\item[(a)] For any $0<\epsilon<1$, there exist
$$\delta=\delta(n, \lambda, \Lambda, p,q,L,t_{0},\epsilon)\in(0,1)$$
given in 
\eqref{lam0def} and $h_{k} \in C^{2}(B_{2})$ such that
$$||\Theta(h_{k},B_{2}) ||_{L^{\infty}(B_{5\tau_{k}}(y^{k}))}
\le C_2(\psi_{k}^{+})^{-1}\big( \lambda^{\frac{nq}{\nu}} \big)$$
for some $C_2=C_2(n,\lambda,\Lambda,p,q,L,t_{0})\ge1$ and
$$ ||\Theta(u-h_{k},B_{2}) ||_{L^{\nu}(B_{5\tau_{k}}(y^{k}))}
\le \epsilon(\psi_{k}^{+})^{-1}\big( \lambda^{\frac{nq}{\nu}} \big)|B_{5\tau_{k}}(y^{k})|^{\frac{1}{\nu}}.$$
\item[(b)] For any $0<\epsilon<1$, there exist
$$\delta=\delta(n, \lambda, \Lambda, p,q,L,t_{0},\epsilon)\in(0,1)$$
given in 
\eqref{lam0def} and $h_{k} \in C^{2}(B_{2}^{+})$ such that
$$||\Theta(h_{k},B_{2}^{+}) ||_{L^{\infty}(B_{5\tau_{k}}^{+}(y^{k}))}
\le C_2(\psi_{k}^{+})^{-1}\big( \lambda^{\frac{nq}{\nu}} \big)$$
for some $C_2=C_2(n,\lambda,\Lambda,p,q,L,t_{0})\ge 1$ and
$$ ||\Theta(u-h_{k},B_{2}^{+}) ||_{L^{\nu}(B_{5\tau_{k}}^{+}(y^{k}))}
\le \epsilon(\psi_{k}^{+})^{-1}\big( \lambda^{\frac{nq}{\nu}} \big)|B_{5\tau_{k}}^{+}(y^{k})|^{\frac{1}{\nu}}.$$
\end{itemize}
\end{lemma}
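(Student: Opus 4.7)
The plan is to prove part (b); the interior case (a) is analogous (indeed slightly simpler, as no flattening near the boundary is needed). Fix an index $k$ and distinguish the two geometric cases of Lemma \ref{lemnon}. In the first case set $\rho := 30\tau_k$ and $z^k := y^k$; in the second case set $\rho := 60\tau_k$ and $z^k := \tilde{y}^k$. In either case $B_{5\tau_k}^+(y^k)$ sits strictly inside $B_{\rho/2}(z^k)$, and on the larger ball $B_\rho^+(z^k)$ we control both $\Theta(u)$ in $L^{\nu p/q}$ and $f$ in $L^n$ by the single quantity $K := (\psi_k^+)^{-1}\bigl(\lambda^{nq/\nu}\bigr)$ (with the $f$-bound carrying the extra $\delta^{q/(\nu p)}$ factor). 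Both cases will be handled by one rescaling argument.

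Next I rescale: set $\tilde u(y) := (K\rho^2)^{-1}\,u(z^k + \rho y)$ on $B_1^+$. A direct computation shows $\tilde u$ is a viscosity solution of $\tilde F_\mu(D^2\tilde u, y) = \tilde f(y)$ in $B_1^+$ (with $\tilde u = 0$ on $T_1$ in the second case), where $\tilde F(X,y) := K^{-1}F(KX, z^k + \rho y)$ is uniformly elliptic with the same $\lambda,\Lambda$ and whose recession $(\tilde F)^\star$ is again convex in $X$. The normalizations from Lemma \ref{lemnon} give $\|\tilde u\|_{L^\infty(B_1^+)} \le 1$ and $\|\tilde f\|_{L^n(B_1^+)} \le C\,\delta^{q/(\nu p)}$, while a change of variables turns the hypothesis on $\sigma_{F^\star}$ into $\|\sigma_{(\tilde F)^\star}(\cdot,0)\|_{L^n(B_1^+)} \le C\delta$. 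Provided $\mu$ and $\delta$ are chosen small, Lemma \ref{asyconapp} produces a viscosity solution $\tilde v$ of $(\tilde F)^\star(D^2\tilde v,0) = 0$ in $B_1^+$ with $\tilde v = \tilde u$ on $\partial B_1^+$ and $\|\tilde u - \tilde v\|_{L^\infty(B_1^+)} \le \eta$, where $\eta > 0$ is a parameter to be fixed below.

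Since $(\tilde F)^\star(\cdot,0)$ is uniformly elliptic and convex, the Evans--Krylov theorem together with the flat-boundary $C^{1,1}$ theory (cf.\ \cite{MR1351007, MR3780142}) yields $\|D^2\tilde v\|_{L^\infty(B_{1/2}^+)} \le C$, where $C$ depends also on the $C^{0,\alpha}$ boundary norm of $\tilde u$, itself uniformly bounded by the Krylov--Safonov global H\"older estimate. I then define
$$h_k(x) := K\rho^2\,\tilde v\!\bigl((x-z^k)/\rho\bigr);$$
since $D^2 h_k = K\,(D^2\tilde v)((\cdot - z^k)/\rho)$ and the rescaled image of $B_{5\tau_k}^+(y^k)$ lies in $B_{1/2}^+$, the $L^\infty$ bound $\|\Theta(h_k,B_2^+)\|_{L^\infty(B_{5\tau_k}^+(y^k))} \le C_2 K$ is immediate.

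For the $L^\nu$ bound, I subtract the two equations: writing
$$\tilde F_\mu(D^2\tilde u, y) - \tilde F_\mu(D^2\tilde v, y) = \tilde f(y) - \bigl[\tilde F_\mu(D^2\tilde v, y) - (\tilde F)^\star(D^2\tilde v, 0)\bigr],$$
the bracketed term is split into a $\mu$-error (from the definition of the recession operator), a spatial oscillation error controlled by $\sigma_{(\tilde F)^\star}$, and a constant shift. By uniform ellipticity $\tilde u - \tilde v \in S^*(\lambda,\Lambda, g)$ on $B_1^+$ with $\|g\|_{L^n(B_1^+)} \le C\,\|D^2\tilde v\|_{L^\infty(B_{1/2}^+)}\,(\mu + \delta) + C\,\delta^{q/(\nu p)}$. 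Lemma \ref{w2del}(b) then yields $\|\Theta(\tilde u - \tilde v, B_{1/2}^+)\|_{L^\nu(B_{1/2}^+)} \le C(\eta + \mu + \delta^{q/(\nu p)})$. Rescaling back multiplies $\Theta$ pointwise by $K$ and the integration produces a factor $\rho^{n/\nu} \asymp |B_{5\tau_k}^+(y^k)|^{1/\nu}$, so choosing $\eta,\mu,\delta$ sufficiently small in terms of $\epsilon$ gives the required bound. The main obstacle, I expect, is the careful bookkeeping in this last subtraction step --- in particular, confirming that the $\mu$-dependent error in $\tilde F_\mu - (\tilde F)^\star$ can be absorbed uniformly in $\|D^2\tilde v\|_{L^\infty}$ using the definition of the recession operator, as in the proof of Lemma \ref{asyconapp}; once this is in place, the whole scheme collapses into a clean rescaling argument of the type used in \cite{MR3957152}.
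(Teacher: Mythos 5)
Your scheme correctly identifies the scaling, the use of Lemma \ref{asyconapp}, and the subtraction-plus-$W^{2,\nu}$ estimate of Lemma \ref{w2del}(b); that skeleton matches the paper. However, there are two genuine gaps.

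First, the claim that the normalization from Lemma \ref{lemnon} gives $\|\tilde u\|_{L^\infty(B_1^+)} \le 1$ is unjustified. Lemma \ref{lemnon} controls $\bigl(\kint \Theta^{\nu p/q}\bigr)^{q/(\nu p)}$ and $\bigl(\kint|f|^n\bigr)^{1/n}$, not $u$ itself, and dividing $u$ by $K\rho^2$ does not produce an $L^\infty$ bound since $K\rho^2$ need not exceed $1$. The paper handles this by using the bound on $\tilde\Theta$ to locate a point $y_0$ with $\tilde\Theta(y_0)\le 1$, extracting a linear function $L$ with $|(\tilde u - L)(y)| \le \tfrac12|y-y_0|^2$ on the large scaled domain, and working with $\tilde u_1 := (\tilde u - L)/C_1$. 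Without that linear correction, the hypotheses of Lemma \ref{asyconapp} (which requires $\|u\|_{L^\infty}\le 1$) and the later $L^\infty$ estimate for the limiting solution are not available.

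Second, and more seriously, the conclusion of the lemma asks for $h_k\in C^2(B_2^+)$ and bounds on $\Theta(h_k, B_2^+)$ and $\Theta(u-h_k, B_2^+)$, where the paraboloid-touching in the definition of $\Theta(\cdot, V)$ is over the \emph{entire} fixed set $V = B_2^+$. Your $h_k(x) := K\rho^2\,\tilde v((x-z^k)/\rho)$ is only defined on the small ball $B_\rho^+(z^k)$, since $\tilde v$ lives on $B_1^+$ (or, with the paper's scaling, on $B_5^+$). A bound on $\Theta(h_k, B_\rho^+(z^k))$ does not give a bound on $\Theta(h_k, B_2^+)$, because extending a global paraboloid from a small ball to $B_2^+$ is exactly where the argument can fail. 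Roughly half of the paper's proof is devoted to this: it introduces the glued function $\tilde h$, equal to $\tilde v$ near the center, equal to $\tilde u_1$ far away, and interpolated in between, and then shows (via the gradient bound \eqref{graest}, the quadratic bound from the linear correction, and \eqref{tgraest}) that the touching paraboloids of $\tilde v$ and of $\tilde u_1 - \tilde v$ on the small ball can be enlarged slightly so as to be valid on all of $\mathcal{T}^{-1}B_2^+$. This extension step is not a matter of bookkeeping; it is the missing mechanism that makes $\Theta(h_k,B_2^+)$ and $\Theta(u-h_k,B_2^+)$ (as opposed to $\Theta$ on a small ball) well defined and controlled.
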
 
\begin{proof} Without loss of generality, we only consider the boundary case (b), i.e. $B_{5\tau_{k}}(y^{k}) \not\subset B_{s_{2}}^{+} $. We recall the notation $\tilde{y}^{k}$ in the proof of Lemma \ref{lemnon}.

Set
$$\tilde{u}(y)=\frac{u(10\tau_{k}y+\tilde{y}^{k})}{C_{0}(10\tau_{k})^{2} (\psi_{k}^{+})^{-1}\big(\lambda^{\frac{nq}{\nu}} \big) }, \
\tilde{f}(y) =\frac{f(10\tau_{k}y+\tilde{y}^{k})}{C_{0} (\psi_{k}^{+})^{-1}\big(\lambda^{\frac{nq}{\nu}} \big) },  $$
where $C_{0}$ is the constant $C$ in Lemma \ref{lemnon}.
Then, $\tilde{u}$ is a viscosity solution to
\begin{align*}
\left\{ \begin{array}{ll}
G(D^{2}\tilde{u},x) = \tilde{f} & \textrm{in $B_{6}^{+}, $}\\
\tilde{u}=0 & \textrm{on $T_{6},$}
\end{array} \right.
\end{align*}
where
$$G(Y,y)=\frac{1}{C_{0} (\psi_{k}^{+})^{-1}\big(\lambda^{\frac{nq}{\nu}}\big)}
F\big( C_{0} (\psi_{k}^{+})^{-1}\big(\lambda^{\frac{nq}{\nu}} \big) Y,
10\tau_{k}y+\tilde{y}^{k} \big) .$$
And we also define
$$ \tilde{\Theta}(y)=\Theta(\tilde{u}, \mathcal{T}^{-1}B_{2}^{+} )(y) \quad
\textrm{where} \quad \mathcal{T}y= 10\tau_{k}y+\tilde{y}^{k}.$$ 

Observe that
$||\tilde{f}||_{L^{n}(B_{6}^{+})} \le C \delta^{\frac{q}{\nu p}}$,
$||\sigma_{G^{\star}}(\cdot,0)||_{L^{n}(B_{6}^{+})} \le C\delta $ for some $C=C(n)$ and
$$ \bigg(  \kint_{B_{6}^{+}} \tilde{\Theta}^{\frac{\nu p}{q}}dx \bigg)^{\frac{q}{\nu p}} \le 1.$$
We can find a point $y_{0} \in B_{6}^{+}$ such that $\tilde{\Theta}(y_{0}) \le 1$, that is,
there exists a linear function $L$ such that  
$$ |(\tilde{u}-L)(y)| \le \frac{1}{2}|y-y_{0}|^{2} \quad \textrm{for any} \ y \in
\mathcal{T}^{-1}B_{2}^{+}. $$
Set $\tilde{u}_{1}=(\tilde{u}-L)/C_{1} $ for $C_{1}=C_{1}(n)>0$
sufficiently large such that
\begin{align} \label{esttu1} ||\tilde{u}_{1}||_{L^{\infty}(B_{6}^{+})} \le 1
\quad \textrm{and} \quad |\tilde{u}_{1}(y)| \le \frac{1}{2C_{1}}|y-y_{0}|^{2}
\ \ \textrm{in} \ \mathcal{T}^{-1}B_{2}^{+} .
\end{align} 

By Lemma \ref{asyconapp}, for a function $\tilde{v}$ solving
\begin{align*}
\left\{ \begin{array}{ll}
G^{\star}(D^{2}\tilde{v},0) = 0 & \textrm{in $B_{5}^{+}, $}\\
\tilde{v}=\tilde{u}_{1} & \textrm{on $\partial B_{5}^{+}  $} 
\end{array} \right.
\end{align*}
in the viscosity sense, we can choose some $\delta$ depending on $n, \lambda, \Lambda, p,q$ and $\eta$ such that $||\tilde{u}_{1}-\tilde{v}||_{L^{\infty}(B_{4}^{+})} \le \eta$ for any $\eta>0$.
We also choose  $\mu<C \delta^{\frac{q}{\nu p}}$.
Since $G^{\star}$ is convex, we have 
\begin{align} \label{c2estob} \begin{split}
||\Theta(\tilde{v},B_{4}^{+})||_{L^{\infty}(B_{4}^{+})}\le C(n,\lambda,\Lambda )
||\tilde{u}_{1}||_{L^{\infty}(\partial B_{5}^{+})}
\le C(n,\lambda,\Lambda)
\end{split}
\end{align}
by using a similar procedure to prove \cite[Theorem 4.3]{MR2486925}.

Since $\tilde{u}_{1}$ solves
\begin{align*}
\left\{ \begin{array}{ll}
\tilde{G}_{\mu}(D^{2}\tilde{u}_{1},x) = \frac{\tilde{f}}{C_{1}} & \textrm{in $B_{6}^{+}, $}\\
\tilde{u}_{1}=-\frac{L}{C_{1}}  & \textrm{on $T_{6},$}
\end{array} \right.
\end{align*}
where
$$\tilde{G}_{\mu}(Y,y)=\frac{1}{C_{1}}G_{\mu}(C_{1}Y,y), $$
we have
\begin{align*} 
\left\{ \begin{array}{ll}
\tilde{u}_{1}-\tilde{v} \in S(\frac{\lambda}{n}, \Lambda,  \frac{\tilde{f}}{C_{1}}-G^{\star}(D^{2}\tilde{v},\cdot)) & \textrm{in $B_{5}^{+}, $}\\
\tilde{u}_{1}-\tilde{v} \equiv 0& \textrm{on $\partial B_{5}^{+}  $.} 
\end{array} \right.
\end{align*}
Now we can see that
\begin{align}\label{gstarest}\begin{split}
|G^{\star}(D^{2}\tilde{v}(y),y)| &\le \sigma_{G^{\star}}(\cdot,0)  (||D^{2}\tilde{v}||_{L^{\infty}(B_{4}^{+})}+1) \\
& \le C\sigma_{G^{\star}}(\cdot,0) \bigg( ||\tilde{u}_{1}||_{L^{\infty}(B_{6}^{+})} +
\bigg|\bigg| \frac{\tilde{f}}{C_{1}}  \bigg|\bigg|_{L^{n}(B_{6}^{+})} +1 \bigg)\end{split}
\end{align}
for any $y \in B_{4}^{+}$ and some $C=C(n,\lambda,\Lambda)$
(see also the proof of \cite[Lemma 7.9]{MR1351007}).
Thus, we have
\begin{align*}
||G^{\star}&(D^{2}\tilde{v},\cdot)||_{L^{n}(B_{4}^{+})} \\ & \le
C||\sigma_{G^{\star}}(\cdot,0) ||_{L^{n}(B_{4}^{+})}\bigg( ||\tilde{u}_{1}||_{L^{\infty}(B_{6}^{+})} +
\bigg|\bigg| \frac{\tilde{f}}{C_{1}}  \bigg|\bigg|_{L^{n}(B_{6}^{+})} +1 \bigg)
\\ & \le C\delta(2 +\delta^{\frac{q}{\nu p}} )
\\ & \le C\delta,
\end{align*}
where $C= C(n,\lambda,\Lambda)$.
By Lemma \ref{w2delta}, we obtain that
\begin{align} \begin{split} \label{difldelest}
||\Theta& (\tilde{u}_{1}-\tilde{v}, B_{2}^{+})||_{L^{\nu}(B_{2}^{+})} \\& \le
C\bigg(||\tilde{u}_{1}-\tilde{v}||_{L^{\infty}(B_{2}^{+})}+
\bigg|\bigg|\frac{\tilde{f}}{C_{1}}-G^{\star}(D^{2}\tilde{v}(y),y)\bigg|\bigg|_{L^{n}(B_{2}^{+})}\bigg)\\ &
\le C(\eta + \delta^{\frac{q}{\nu p}} + \delta)
\\ & \le C(n,\lambda,\Lambda)\eta.\end{split}
\end{align}
We choose sufficiently small $\delta$ to obtain the last inequality.

Now we extend $\tilde{v}$ to $ \mathcal{T}^{-1}B_{2}^{+}$ satisfying properties similar to \eqref{c2estob} and \eqref{difldelest}.
Define $\tilde{h}\in C(\mathcal{T}^{-1}B_{2}^{+})$ as
\begin{align*} \tilde{h}(y)=
\left\{ \begin{array}{ll}
\tilde{v}(y) & \textrm{in $B_{2}^{+}, $}\\
\tilde{v}(y)+ \frac{|y|-2}{2}(\tilde{u}_{1}-\tilde{v})(y)  & \textrm{in $B_{4}^{+} \backslash B_2^{+}$,} \\
\tilde{u}_{1}(y) & \textrm{in $\mathcal{T}^{-1}B_{2}^{+} \backslash B_{4}^{+} $}.
\end{array} \right.
\end{align*}
Then we have
\begin{align*} (\tilde{u}_{1}-\tilde{h})(y)=
\left\{ \begin{array}{ll}
(\tilde{u}_{1}-\tilde{v})(y) & \textrm{in $B_{2}^{+}, $}\\
\frac{4-|y|}{2}(\tilde{u}_{1}-\tilde{v})  & \textrm{in $B_{4}^{+} \backslash B_{2}^{+}$,} \\
0 & \textrm{in $\mathcal{T}^{-1}B_{2}^{+} \backslash B_{4}^{+} $}.
\end{array} \right.
\end{align*} 

Suppose that $z_{0} \in B_{1}^{+}$ and $m>\Theta(\tilde{v},B_{2}^{+})(z_{0})$.
Then, we have
\begin{align} \label{estthm}
|\tilde{h}(y)-\tilde{L}(y)| =| \tilde{v}(y)- \tilde{L}(y)| \le  \frac{m}{2}|y-z_{0}|^{2} ,
\end{align}
where $\tilde{L}(y)= \tilde{v}(z_{0})+A \cdot (y-z_{0})$
for any $y \in B_{2}^{+}$ and some $A \in \mathbb{R}^{n}$.
We also observe that $||\tilde{v}||_{L^{\infty}(B_{4}^{+})} \le 2$ by using \eqref{esttu1} and $||\tilde{u}_{1}-\tilde{v}||_{L^{\infty}(B_{4}^{+})} \le \eta$ for any $\eta <1$.
Note that we can choose $y \in \partial B_{1}(z_{0}) \cap B_{2}^{+}$ such that
$A \cdot (y-z_{0})= -|A|$ when $A_{n} \le 0$ and
$A \cdot (y-z_{0})= |A| $ when $A_{n} > 0$.
Then from \eqref{estthm}, we have
\begin{align} \label{graest}
|A|\le \frac{m}{2}+4.
\end{align}
Now we can observe that
\begin{align*}
\tilde{h}(y)&\le ||\tilde{v}||_{L^{\infty}(B_{4}^{+})}+|\tilde{u}_1(y)|
\\&\le 2+\frac{|y-y_{0}|^{2}}{2C_{1}}
\\ & \le 2|y-z_{0}|^{2}+\frac{1}{C_{1}}|y-z_{0}|^{2}+\frac{1}{C_{1}}|z_{0}-y_{0}|^{2}
\\ & \le 2|y-z_{0}|^{2}+\frac{1}{C_{1}}|y-z_{0}|^{2} + \frac{49}{C_{1}}
|y-z_{0}|^{2}
\end{align*}
for any $y \in \mathcal{T}^{-1}B_{2}^{+}  \backslash B_{2}^{+} $,
since $y_{0} \in B_{6}^{+}$.
Using \eqref{graest}, we obtain
\begin{align*}
\tilde{h}(y)\le
\bigg\{& \frac{2C_{1}+50}{C_{1}} +\bigg(
 \frac{m}{2}+4\bigg)+2 \bigg\}|y-z_{0}|^{2}
  + A\cdot(y-z_{0})+\tilde{v}(z_{0})
\end{align*}
for any $y \in \mathcal{T}^{-1}B_{2}^{+}  \backslash B_{2}^{+} $.
Similarly, we can also derive that
\begin{align*}
\tilde{h}(y)\ge -
\bigg\{& \frac{2C_{1}+50}{C_{1}} +\bigg(
 \frac{m}{2}+4\bigg)+2 \bigg\}|y-z_{0}|^{2}
 + A\cdot(y-z_{0})+\tilde{v}(z_{0})
\end{align*}
for any $y \in \mathcal{T}^{-1}B_{2}^{+}  \backslash B_{2}^{+} $.
Now we see that
\begin{align*}
\Theta(\tilde{h},\mathcal{T}^{-1}B_{2}^{+})(z_{0}) \le
\Theta(\tilde{v},B_{2}^{+})(z_{0})+  \frac{2(8C_{1}+50)}{C_{1}} \le C
\end{align*}
because $m$ is arbitrary and $||\Theta(\tilde{v},B_{2}^{+})||_{L^{\infty}(B_{2}^{+})}\le C$ for some constant
$C=C(n,\lambda,\Lambda )$ and
this shows $$||\Theta(\tilde{h},\mathcal{T}^{-1}B_{2}^{+})(z_{0}) || \le C. $$

We next suppose that $z_{1} \in B_{1}^{+}$ and $\tilde{m} >\Theta(\tilde{u}_{1}-\tilde{v}
,B_{2}^{+})(z_{1})$.
Then, for any $y \in B_{2}^{+}$, there is a linear function $\overline{L}$ such that
\begin{align*}
|(\tilde{u}_{1}-\tilde{h})(y)-\overline{L}(y)|=|(\tilde{u}_{1}-\tilde{v})(y)-\overline{L}(y)|
\le \frac{\tilde{m}}{2}|y-z_{1}|^{2}.
\end{align*}
We can write $\overline{L}(y)=\tilde{A}\cdot(y-z_{1})+(\tilde{u}_{1}-\tilde{v})(z_{1})$ for some
$\tilde{A} \in \mathbb{R}^{n}$.
Recall $||\tilde{u}_{1}-\tilde{v}||_{L^{\infty}(B_{4}^{+})} \le \eta$ for any $\eta>0$.
Then by using a similar argument as above, we can see that
\begin{align} \label{tgraest}
|\tilde{A}| \le \frac{\tilde{m}}{2}+2\eta.
\end{align}

Observe that
\begin{align*}
0 \le \bigg( \frac{\tilde{m}}{2}+3\eta\bigg)|y-z_{1}|^{2}
+\tilde{A}\cdot(y-z_{1})+ (\tilde{u}_{1}-\tilde{h})(z_{1})
\end{align*}
for any $y \in  \mathcal{T}^{-1}B_{2}^{+}  \backslash B_{2}^{+}  $.
We now see that
\begin{align*}
(\tilde{u}_{1}-\tilde{h})(y) & \le ||\tilde{u}_{1}-\tilde{v}||_{L^{\infty}(B_{4}^{+})}
\\& \le \eta|y-z_{1}|^{2}
\\ & \le \bigg( \frac{\tilde{m}}{2}+ 4\eta \bigg)|y-z_{1}|^{2}
+\tilde{A}\cdot(y-z_{1})+ (\tilde{u}_{1}-\tilde{h})(z_{1})
\end{align*}
for any $y \in  \mathcal{T}^{-1}B_{2}^{+}  \backslash B_{2}^{+}  $.
Note that we can show such an inequality for the reverse direction.
Hence, we obtain
\begin{align*}
\Theta(\tilde{u}_{1}-\tilde{h}, \mathcal{T}^{-1}B_{2}^{+} )(z_{1}) \le
\Theta(\tilde{u}_{1}-\tilde{v},B_{2}^{+})(z_{1})  +8\eta.
\end{align*}
Now we deduce that
$$ ||\Theta(\tilde{u}_{1}-\tilde{h},\mathcal{T}^{-1}B_{2}^{+} )||_{L^{\nu}(B_{1}^{+})} \le C\eta $$
from \eqref{difldelest}, where $C = C(n,\lambda,\Lambda)$.
Since $B_{5\tau_{k}}^{+}(y^{k})\subset B_{10\tau_{k}}^{+}(\tilde{y}^{k}) $,
we can complete the proof by scaling back and choosing $\eta$ sufficiently small.
\end{proof}

Now we estimate $|E(s_{1},K\lambda)|$ for any $\lambda \ge \alpha \lambda_{0} \ge 1$.
Set $K_1: =4C_{2}^{\frac{\nu p}{q}}$ where $C_{2}$ is the constant in Lemma \ref{thetaest}.
Since $K_1 \ge 1$, we see that $E(s_{1},K_1\lambda) \subset E(s_{1},\lambda)$.
Then, by Lemma \ref{stta}, there exist
$\{ B_{\tau_{k}}^{+}(y^{k}) \}_{k=1}^{\infty}$ with $y^{k} \in E(s_{1},\lambda)$ and $\tau_{k}\in (0, \frac{ s_{2}-s_{1}}{60})$
such that
$$ E(s_{1},K_1\lambda) \subset  E(s_{1},\lambda) \subset \bigcup_{k=1}^{\infty}B_{5\tau_{k}}^{+}(y^{k}).  $$

Observe that
$$\Theta(u,B_{2}^{+})\le \Theta(u-h_{k},B_{2}^{+})+\Theta(h_{k},B_{2}^{+}). $$
For simplicity, we omit the domain for $\Theta$ and write $\Theta(u,B_{2}^{+}) =\Theta(u) $.
Let $K_2=C_{2}^{\nu}2^{(nq-1)\frac{\nu}{nq}+1}$ and we define $K=\max\{K_1,K_2\}$.
Then for $x \in B_{5\tau_{k}}^{+}(y^{k}) \cap  E(s_{1},K\lambda)$, we have
\begin{align}\label{labad1}\begin{split}
\psi(\Theta, x)^{\frac{\nu}{nq}}&\le \psi(\Theta(u-h_{k})+\Theta(h_{k}), x)^{\frac{\nu}{nq}}
\\ & \le \bigg( \frac{1}{2} \psi_{k}^{+}(2\Theta(u-h_{k}))+ \frac{1}{2} \psi_{k}^{+}(2\Theta(h_{k}))\bigg)^{\frac{\nu}{nq}}
\\& \le \bigg( \frac{1}{2}\cdot 2^{nq} \psi_{k}^{+}(\Theta(u-h_{k}))+ \frac{1}{2} \psi_{k}^{+}(2C_{2}\big(\psi_{k}^{+})^{-1}\big( \lambda^{\frac{nq}{\nu}})\big)\bigg)^{\frac{\nu}{nq}}
\\ & \le \bigg( 2^{nq-1} \psi_{k}^{+}(\Theta(u-h_{k}))+ C_{2}^{nq}2^{nq-1} \lambda^{\frac{nq}{\nu}}\bigg)^{\frac{\nu}{nq}}
\\ & \le  2^{(nq-1)\frac{\nu}{nq}} \psi_{k}^{+}(\Theta(u-h_{k}))^{\frac{\nu}{nq}}+\frac{K}{2}
\lambda. \end{split}
\end{align} 
We note that we used Lemma \ref{thetaest} (b) to obtain the third inequality.
Now from the definition of $ E(s_{1},K\lambda)$, we obtain
$$ \psi(\Theta, x)^{\frac{\nu}{nq}} \le  2^{(nq-1)\frac{\nu}{nq}} \psi_{k}^{+}(\Theta(u-h_{k}))^{\frac{\nu}{nq}}+\frac{1}{2}\psi(\Theta, x)^{\frac{\nu}{nq}} $$
 for any $x \in B_{5\tau_{k}}^{+}(y^{k}) \cap  E(s_{1},K\lambda)$
and this yields
$$  \psi(\Theta, x) \le 2^{\frac{nq}{\nu}+nq-1}\psi_{k}^{+}(\Theta(u-h_{k})).$$

Letting $\tilde{\theta}(t)=(\psi_{k}^{+})^{-1}(t^{\frac{nq}{\nu}})^{\nu}$,
$\tilde{\theta}$ satisfies $\textrm{(aInc)}_{1}$.
Then we can apply Lemma \ref{orljen} to $\tilde{\theta}$ and observe that
\begin{align} \label{ukji} \begin{split}
\bigg( (\psi_{k}^{+})^{-1} \bigg( & C\kint_{B_{5\tau_{k}}^{+}(y^{k})} \psi_{k}^{+}(\Theta(u-h_{k}))^{\frac{\nu}{nq}}  dx \bigg)^{\frac{nq}{\nu}} \bigg)^{\nu}
\\ &  \le \kint_{B_{5\tau_{k}}^{+}(y^{k})}
\big\{(\psi_{k}^{+})^{-1} \big( \psi_{k}^{+}(\Theta(u-h_{k})) \big) \big\}^{\nu} dx
\end{split}
\end{align}
for some $C=C(L)>0$, and by Lemma \ref{thetaest}, it follows that
\begin{align*}
\bigg( \kint_{B_{5\tau_{k}}^{+}(y^{k})} \psi_{k}^{+}(\Theta(u-h_{k}))^{\frac{\nu}{nq}}  dx \bigg)^{\frac{nq}{\nu}}
& \le C \psi_{k}^{+} \bigg(\bigg( \kint_{B_{5\tau_{k}}^{+}(y^{k})}\Theta(u-h_{k})^{\nu} dx \bigg)^{\frac{1}{\nu}} \bigg)
\\ & \le C  \psi_{k}^{+} \big (\epsilon(\psi_{k}^{+})^{-1}\big( \lambda^{\frac{nq}{\nu}} \big) \big)
\\ & \le C\epsilon^{np} \lambda^{\frac{nq}{\nu}}.
\end{align*}
Thus, we obtain
\begin{align}
\int_{B_{5\tau_{k}}^{+}(y^{k})}& \psi_{k}^{+}(\Theta(u-h_{k}))^{\frac{\nu}{nq}}  dx \le C\epsilon^{\frac{\nu p}{q}}
\lambda |B_{5\tau_{k}}^{+} |.
\end{align}

Therefore, we can deduce that
\begin{align*}
\int_{E(s_{1},K\lambda)}\psi(\Theta, x)^{\frac{\nu}{nq}}  dx & \le
C \sum_{k=1}^{\infty}\int_{B_{5\tau_{k}}^{+}(y^{k})} \psi_{k}^{+}(\Theta(u-h_{k}))^{\frac{\nu}{nq}}  dx
\\ & \le C \epsilon^{\frac{\nu p}{q}} \sum_{k=1}^{\infty} \lambda |B_{5\tau_{k}}^{+} |
\\ &  \le C \epsilon^{\frac{\nu p}{q}} \sum_{k=1}^{\infty} \lambda |B_{\tau_{k}}^{+} |
\end{align*}
for some $C=C(n,\lambda,\Lambda,q)$.

Then we can obtain
\begin{align}  \begin{split}
&\int_{E(s_{1},K\lambda)}\psi(\Theta, x)^{\frac{\nu}{nq}}  dx
\\ &\le C \epsilon^{\frac{\nu p}{q}} \sum_{k=1}^{\infty}\lambda  \bigg(
\frac{4}{\lambda} \int_{E_{k}(\lambda/4)}\psi(\Theta, x)^{\frac{\nu}{nq}} dx +
 \bigg( \frac{4}{\lambda\delta}\bigg)^{\frac{nq}{\nu p}}\int_{H_{k}(\lambda\delta/4)}\psi(|f|, x)^{\frac{1}{p}} dx \bigg)
\end{split}
\end{align}
by Lemma \ref{ehest}.
Since $\{ B_{\tau_{k}}^{+}(y^{k}) \}_{k=1}^{\infty}$ is mutually disjoint and $$ \bigcup_{k=1}^{\infty}B_{5\tau_{k}}^{+}(y^{k}) \subset B_{s_{2}}^{+},$$ we see that
\begin{align} \label{non2} \begin{split}
&\int_{E(s_{1},K\lambda)}\psi(\Theta, x)^{\frac{\nu}{nq}}  dx
\\ &\le C \epsilon^{\frac{\nu p}{q}} \lambda
 \bigg(\frac{4}{\lambda} \int_{E(s_{2},\lambda/4)}\psi(\Theta, x)^{\frac{\nu}{nq}} dx +
 \bigg( \frac{4}{\lambda\delta}\bigg)^{\frac{nq}{\nu p}}\int_{H(s_{2},\lambda\delta/4)}\psi(|f|, x)^{\frac{1}{p}} dx \bigg).
\end{split}
\end{align}

Define $\Xi(x)= \psi(\Theta, x)^{\frac{\nu}{nq}}$, $\Xi_{l}(x)=\min \{\Xi(x),l \}  $ for $l>0$ and $\chi = nq/\nu$.
Then we get
\begin{align*}
\int_{B_{s_{1}}^{+}}\Xi_{l}^{\chi-1} \Xi dx &
= (\chi -1)K^{\chi-1}\int_{0}^{\frac{l}{K}} \lambda^{\chi-2} \int_{E(s_{1},K\lambda)}  \Xi dx d\lambda
\\ & =(\chi -1)K^{\chi-1}\int_{0}^{\alpha \lambda_{0}} \lambda^{\chi-2} \int_{E(s_{1},K\lambda)}  \Xi dx d\lambda
\\ & \quad + (\chi -1)K^{\chi-1}\int_{\alpha \lambda_{0}}^{\frac{l}{K}} \lambda^{\chi-2} \int_{E(s_{1},K\lambda)}  \Xi dx d\lambda
\\ & =: I+J.
\end{align*}

For $I$, we have
\begin{align*}
|I| & \le  (\chi -1)K^{\chi-1} \bigg( \int_{0}^{\alpha \lambda_{0}} \lambda^{\chi-2}  d \lambda \bigg)
 \bigg( \int_{B_{s_{1}}^{+}} \Xi dx \bigg)
\\ & \le (K\alpha \lambda_{0})^{\chi-1}\int_{B_{2}^{+}} \Xi dx
\\ & \le \frac{C\lambda_{0}^{\chi-1}}{(s_{2}-s_{1})^{(\chi-1)nq}}\int_{B_{2}^{+}} \Xi dx,
\end{align*}
where $C=C(n,\lambda,\Lambda,p,q,L,t_{0})>0$.

On the other hand, for $J$, we can derive by using \eqref{non2} that
\begin{align*}
&|J|
\\ & \le C \epsilon^{\frac{\nu p}{q}} \lambda
\bigg| \int_{\alpha \lambda_{0}}^{\frac{l}{K}} \lambda^{\chi-2} \bigg\{
\frac{1}{\lambda} \int_{E(s_{2},\lambda/4)}\hspace{-2.5em}\psi(\Theta, x)^{\frac{\nu}{nq}} dx +
 \bigg( \frac{1}{\lambda\delta}\bigg)^{\frac{\chi}{p}}\hspace{-0.5em}
\int_{H(s_{2},\lambda\delta/4)}\hspace{-3.5em}\psi(|f|, x)^{\frac{1}{p}} dx
\bigg\}d\lambda \bigg|
\\ &  \le C \epsilon^{\frac{\nu p}{q}} \int_{\alpha \lambda_{0}}^{4l} \lambda^{\chi-2}  \int_{E(s_{2},\lambda/4)} \psi(\Theta, x)^{\frac{\nu}{nq}} dxd\lambda 
\\ & \qquad + C\epsilon^{\frac{\nu p}{q}} \int_{0}^{\infty} \lambda^{\chi-1-\frac{\chi}{p}}
\int_{H(s_{2},\lambda\delta/4)}\delta^{-\frac{\chi}{p}}\psi(|f|, x)^{\frac{1}{p}} dxd\lambda \\ &
\le  C \epsilon^{\frac{\nu p}{q}} \int_{B_{s_{2}}^{+}} \int_{0}^{\Xi_{l}(x)} \lambda^{\chi-2}\Xi dx d\lambda
\\ & \qquad +  C \epsilon^{\frac{\nu p}{q}} \int_{B_{2}^{+}} \int_{0}^{\psi(|f|, x)^{\frac{\nu}{nq}}} \lambda^{\chi-1-\frac{\chi}{p}}
\big( \delta^{-\chi}  \psi(|f|,x)\big)^{\frac{1}{p}} dx
\\ & \le C\epsilon^{\frac{\nu p}{q}} \int_{B_{s_{2}}^{+}}  \Xi_{l}^{\chi-1}\Xi dx + C\epsilon
\int_{B_{2}^{+}} \big( \psi(|f|,x)^{\frac{\nu}{nq}}\big)^{\chi-\frac{\chi}{p}} \big( \delta^{-\chi}  \psi(|f|,x)\big)^{\frac{1}{p}} dx
\\ & = C_{\ast} \epsilon^{\frac{\nu p}{q}} \bigg(  \int_{B_{s_{2}}^{+}}\Xi_{l}^{\chi-1}\Xi dx +
 \int_{B_{2}^{+}}  \delta^{-\chi}  \psi(|f|,x) dx \bigg).
\end{align*}

Now we choose $\epsilon$ small enough such that $C_{\ast}\epsilon^{\frac{\nu p}{q}}  = 1/2$
(we note that $\delta$ is also determined at this point).
Then
\begin{align*}
&\int_{B_{s_{1}}^{+}} \Xi_{l}^{\chi-1}\Xi dx \\ &
\le \frac{C\lambda_{0}^{\chi-1}}{(s_{2}-s_{1})^{(\chi-1)nq}}\int_{B_{2}^{+}}  \Xi dx +
\frac{1}{2} \int_{B_{s_{2}}^{+}} \Xi_{l}^{\chi-1}\Xi dx
+ C  \int_{B_{2}^{+}} \psi(|f|,x) dx .
\end{align*}
Let $\xi(s)=\int_{B_{s}^{+}}\Xi_{l}^{\chi-1}\Xi dx $ for $s>0$. Then we see that
\begin{align*}
\xi(s_{1}) \le \frac{C\lambda_{0}^{\chi-1}}{(s_{2}-s_{1})^{(\chi-1)nq}}\int_{B_{2}^{+}} \Xi dx
+ C \int_{B_{2}^{+}} \psi(|f|,x) dx +\frac{1}{2} \xi(s_{2}).
\end{align*}
By Lemma \ref{tchlm}, we obtain that
$$\xi(s_{1}) \le  C \bigg(  \frac{\lambda_{0}^{\chi-1}}{(s_{2}-s_{1})^{(\chi-1)nq}}\int_{B_{2}^{+}} \Xi dx+
\int_{B_{2}^{+}} \psi(|f|,x) dx  \bigg)$$
and this implies 
\begin{align*}
\int_{B_{1}^{+}} \Xi_{l}^{\chi-1}\Xi dx & \le
C \lambda_{0}^{\chi-1} \int_{B_{2}^{+}} \Xi dx+  C \int_{B_{2}^{+}}\psi(|f|,x) dx
\\ & \le C (n, \lambda, \Lambda, p,q,L,t_{0}).
\end{align*}
We have used the definition of $\lambda_{0}$ and the assumption $\kint_{B_{4}^{+}}\psi(|f|,x) dx \le 1$.
By applying the monotone convergence theorem,
\begin{align} \label{intlim}
\lim_{l \to \infty} \int_{B_{1}^{+}} \Xi_{l}^{\chi-1}\Xi dx = \int_{B_{1}^{+}}  \psi (\Theta,x)dx \le C,
\end{align}
where $C$ is a constant depending only on $n, \lambda, \Lambda, p,q, L$ and $t_{0}$.

\begin{proof}[Proof of Theorem \ref{main_ac}]
Let $$\kappa := ||f||_{L^{\psi(\cdot)}(B_{4}^{+})}
+||u||_{L^{\infty}(B_{4}^{+})}^n.$$
Now we consider $\tilde{F}(X,x)=\kappa^{-1}F(\kappa X,x)$, $\tilde{u}=\frac{\mu}{\kappa}u$,
$\tilde{\Theta}=\Theta (\tilde{u},B_{1}^{+})$ and $\tilde{f}=\frac{\mu}{\kappa}f$.
Note that we choose $\mu$ so that Lemma \ref{arr_ac} can be applied.
Then, $\tilde{u}$ is a viscosity solution of
\begin{align*}
\left\{ \begin{array}{ll}
\tilde{F}_{\mu}(D^{2}\tilde{u},x) = \tilde{f} & \textrm{in $B_{4}^{+}, $}\\
\tilde{u}=0 & \textrm{on $T_{4}$.}
\end{array} \right.
\end{align*}
We also observe that $||\tilde{u}||_{L^{\infty}(B_{4}^{+})} \le 1$ and
$\kint_{B_{4}^{+}} \psi(|\tilde{f}|,x)dx \le 1$.
Thus, by \eqref{intlim}, we deduce that
\begin{align*}
\int_{B_{1}^{+}} \psi (\tilde{\Theta},x) dx \le
C,
\end{align*}
where $C=C(n, \lambda, \Lambda, p,q,L,t_{0})$.
Now we finally obtain
\begin{align*}
||D^{2}u||_{L^{\psi(\cdot)}(B_{1}^{+})} \le C
( ||u||_{L^{\infty}(B_{4}^{+})}^n+||f||_{L^{\psi(\cdot)}(B_{4}^{+})})
\end{align*} from Lemma \ref{eqthehe} with unit ball property (\cite[Lemma 3.2.3]{MR3931352})
and then we can finish the proof.
\end{proof}

\section{Gradient and global estimates}
In this section, we first derive gradient estimates for a viscosity solution of
\begin{align}\label{groo}
\left\{ \begin{array}{ll}
F(D^{2}u,Du,u,x) = f & \textrm{in $B_{1}^{+}, $}\\
u=0 & \textrm{on $T_{1}$}
\end{array} \right.
\end{align}
 and then prove our desired result, Theorem \ref{gor_main}.

We begin this section with the following observation.
Let $u$ be a viscosity solution of \eqref{groo} and assume that it also solves
\begin{align}
\left\{ \begin{array}{ll}
F(D^{2}u,0,0,x) = \tilde{f} & \textrm{in $B_{1}^{+}, $}\\
u=0 & \textrm{on $T_{1}$}
\end{array} \right.
\end{align}
in the viscosity sense.
Then by \eqref{ob_sc}, we have
$$ |\tilde{f}| \le |f| + b|Du| + c|u| \quad \textrm{in}\ B_{1}^{+},$$
and this implies
$$||\tilde{f}||_{L^{\psi(\cdot)}(B_{1/4}^{+})} \le C( ||f||_{L^{\psi(\cdot)}(B_{1/2}^{+})}+
||u||_{W^{1,\psi(\cdot)}(B_{1/2}^{+})} )$$
for some $C=C(b,c)>0$.
We also note that a similar estimate can be obtained for the interior case.

Therefore, to obtain a Hessian estimate for $u$, we need to consider its gradient estimate.

\begin{theorem}\label{thm41}Assume that $\varphi$ is a weak $\Phi$-function satisfying (A0), (A1-$\varphi^{-}$), $\textrm{(aInc)}_{p}$ and $\textrm{(aDec)}_{q}$. Then the following hold:
\begin{itemize} 
\item[(a)] Suppose that $F=F(X,\xi,z,x)$ satisfies \eqref{ob_sc} in $B_{1}$, $f \in L^{\psi(\cdot)}(B_{1})$,
and there exists $F^{\star}$ satisfying \eqref{fstar} which is convex in $X$.
Then there exists a small constant $\delta$ depending on $n, \lambda, \Lambda, p,q,L$ and $t_{0}$  such that
$$\bigg( \kint_{B_{r}(x_{0})\cap B_{1}} \sigma_{F^{\star}}(x_{0},x)^{n} dx  \bigg)^{\frac{1}{n}} \le \delta $$
for any $r>0$ and $x_{0} \in B_{1}$ implies that for any viscosity solution $u$ of
$$ F(D^{2}u,Du,u,x) = f  \textrm{ in }B_{1},$$
we have $u \in W^{1, \psi(\cdot)}(B_{1/2})$ with the estimate
$$ ||u||_{ W^{1, \psi(\cdot)}(B_{1/2})} \le C(||u||_{ L^{\infty}(B_{1})}^n+||f||_{ L^{\psi(\cdot)}(B_{1})})$$
for some $C=C(n, \lambda, \Lambda, p,q,L,t_{0},b,c)>0$.
\item[(b)]
Suppose that $F=F(X,\xi,z,x)$ satisfies \eqref{ob_sc} in $B_{1}^{+}$, $f \in L^{\psi(\cdot)}(B_{1}^{+})$ and there exists $F^{\star}$ satisfying \eqref{fstar} which is convex in $X$.
Then there exists a small constant $\delta>0$ depending on $n, \lambda, \Lambda, p,q, L$
and $t_{0}$ such that
$$\bigg( \kint_{B_{r}(x_{0})\cap B_{1}^{+}} \sigma_{F^{\star}}(x_{0},x)^{n} dx  \bigg)^{\frac{1}{n}} \le \delta $$
for any $r>0$ and $x_{0} \in B_{1}^{+}$ implies that for any viscosity solution $u$ of
\begin{align*}
\left\{ \begin{array}{ll}
F(D^{2}u,Du,u,x) = f & \textrm{in $B_{1}^{+}, $}\\
u=0 & \textrm{on $T_{1},$}
\end{array} \right.
\end{align*}
we have $u \in W^{1, \psi(\cdot)}(B_{1/2}^{+})$ with the estimate
$$ ||u||_{ W^{1, \psi(\cdot)}(B_{1/2}^{+})} \le C(||u||_{ L^{\infty}(B_{1}^{+})}^n+||f||_{ L^{\psi(\cdot)}(B_{1}^{+})})$$
for some $C=C(n, \lambda, \Lambda, p,q,L,t_{0},b,c)>0$.
\end{itemize}
\end{theorem}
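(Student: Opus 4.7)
The plan is to obtain the $W^{1,\psi(\cdot)}$-bound from a pointwise $C^{1,\alpha}$-regularity estimate for viscosity solutions of the fully nonlinear equation, then bridge to the generalized Orlicz scale via a Jensen-type inequality together with a rescaling argument patterned on the one used at the end of the proof of Theorem~\ref{main_ac}.

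\emph{Step 1 (Normalization).} Set $\kappa := \|u\|_{L^{\infty}(B_{1}^{+})}^{n} + \|f\|_{L^{\psi(\cdot)}(B_{1}^{+})}$. Mimicking the rescaling at the end of Section~3, I would introduce
$$\tilde{F}(X,\xi,z,x) := \kappa^{-1}F(\kappa X,\kappa \xi,\kappa z,x),\qquad \tilde{u} := \tfrac{\mu}{\kappa}u,\qquad \tilde{f} := \tfrac{\mu}{\kappa} f,$$
where $\mu$ is chosen as in Lemma~\ref{arr_ac}. Then $\tilde u$ is a viscosity solution of $\tilde F_{\mu}(D^{2}\tilde u, D\tilde u, \tilde u, x) = \tilde f$ on $B_{1}^{+}$ with zero trace on $T_{1}$, satisfying $\|\tilde u\|_{L^{\infty}(B_{1}^{+})} \le 1$ and $\kint_{B_{1}^{+}}\psi(|\tilde f|,x)\,dx \le 1$. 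The structure condition \eqref{ob_sc} and the smallness hypothesis on $\sigma_{F^{\star}}$ are preserved (with the same $\lambda,\Lambda$ and constants $b,c$), and $\tilde F_{\mu}$ is still uniformly elliptic with recession $\tilde F^{\star}$ convex in $X$.

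\emph{Step 2 ($C^{1,\alpha}$-regularity).} I would invoke the interior (resp.\ boundary) $C^{1,\alpha}$-regularity for viscosity solutions of uniformly elliptic fully nonlinear equations with $x$-dependent coefficients and lower-order terms (Caffarelli--C\'ordoba--Silvestre--Swi\c{e}ch--Winter type). The smallness of the oscillation of $F^{\star}$ on small balls, combined with the recession relation $F^{\star}=\lim_{\mu\to 0}F_{\mu}$, provides the required $x$-continuity of $\tilde F_{\mu}$ at the Hessian scales that matter for $C^{1,\alpha}$. Since $\psi$ satisfies $\textrm{(aInc)}_{np}$ with $np>n$, Jensen's inequality (Lemma~\ref{orljen}) applied to $\psi^{-}$, together with (A0), gives $\|\tilde f\|_{L^{np}(B_{1}^{+})} \le C$. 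Standard boundary barriers on the flat portion $T_{1}$ with zero Dirichlet data handle the boundary case. Altogether,
$$\|\tilde u\|_{C^{1,\alpha}(\overline{B_{1/2}^{+}})} \le C\bigl(\|\tilde u\|_{L^{\infty}(B_{1}^{+})} + \|\tilde f\|_{L^{np}(B_{1}^{+})}\bigr) \le C,$$
where $C$ depends only on $n,\lambda,\Lambda,p,q,L,t_{0},b,c$.

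\emph{Step 3 (Orlicz conclusion and scaling back).} The unit-ball property together with $\textrm{(aDec)}_{nq}$ yields the embedding $L^{\infty}(B_{1/2}^{+})\hookrightarrow L^{\psi(\cdot)}(B_{1/2}^{+})$, so
$$\|\tilde u\|_{W^{1,\psi(\cdot)}(B_{1/2}^{+})} \le C \|\tilde u\|_{C^{1}(\overline{B_{1/2}^{+}})} \le C.$$
Rescaling via $u = (\kappa/\mu)\tilde u$ converts this into the desired estimate
$$\|u\|_{W^{1,\psi(\cdot)}(B_{1/2}^{+})} \le C\bigl(\|u\|_{L^{\infty}(B_{1}^{+})}^{n} + \|f\|_{L^{\psi(\cdot)}(B_{1}^{+})}\bigr),$$
the $n$-th power on $\|u\|_{L^{\infty}}$ being exactly the scaling forced by $\psi(t,x)=\varphi(t^{n},x)$. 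The interior case (a) is treated analogously, replacing half-balls with full balls and dropping the boundary trace.

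\emph{Main obstacle.} The delicate point is Step~2: one must verify that the small-oscillation hypothesis is stated on $\sigma_{F^{\star}}$ (not on $\sigma_{F}$) and yet suffices for the $C^{1,\alpha}$-theory applied to $\tilde F_{\mu}$. This rests on a careful perturbation from $F_{\mu}$ to $F^{\star}$ together with the structural bound \eqref{ob_sc}, allowing the lower-order $b,c$ terms to be absorbed. A secondary subtlety is the precise bookkeeping of the rescaling so that the normalization $\|\tilde u\|_{L^{\infty}}\le 1$ and $\kint\psi(|\tilde f|,x)\,dx\le 1$ hold simultaneously under division by the single quantity $\kappa$; the $n$-th power is what makes this possible. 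Once these points are in place, the Orlicz-to-Lebesgue conversion via Lemma~\ref{orljen} and the embedding from $L^{\infty}$ into $L^{\psi(\cdot)}$ are routine.
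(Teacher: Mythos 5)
Your high-level plan matches the paper's in its first and last steps (normalize so that $\|u\|_{L^\infty}\le 1$ and $\|f\|_{L^{\psi(\cdot)}}\le 1$, then scale back at the end using the $n$-th power in the definition of $\psi(t,x)=\varphi(t^n,x)$), but the middle step is genuinely different from the paper and, as you yourself flag, contains a real gap that you do not close.

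\emph{Where you diverge.} You pass through the uniform bound $\|Du\|_{L^\infty(B_{1/2}^+)}\le C$ by invoking a black-box $C^{1,\alpha}$ estimate, and then embed $L^\infty\hookrightarrow L^{\psi(\cdot)}$ on a bounded domain. The paper instead keeps the $y$-dependence: it establishes a pointwise affine approximation $\|u-l\|_{L^\infty(B_r(y)\cap\mathbb{R}^n_+)}\le Cr^{1+\alpha}H(y)$ where $H(y)=\|u\|^n_{L^\infty(B_d^+(y))}+\beta_0^{-1}\sup_{r\le d}r^{1-\alpha}\bigl(\kint_{B_r^+(y)}|f|^{p'}\bigr)^{1/p'}$ with $p'\in(n,np)$, leading to the Lipschitz-type bound $|Du(y)|\le CH(y)$, and then controls $\int_{B_{1/2}^+}\psi(|Du|,y)\,dy$ by the Hardy--Littlewood maximal function of $|f|^{p'}$ and invokes boundedness of $\mathcal{M}$ on $L^{\tilde\psi(\cdot)}$ with $\tilde\psi(t,x)=\psi(t^{1/p'},x)$ (Theorem~4.3.4 of \cite{MR3931352}). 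If your Step~2 were justified, your $L^\infty$ route would be simpler (it avoids the maximal-operator machinery entirely), since $Du\in L^\infty(B_{1/2}^+)$ is strictly stronger than what is needed; but it would give the same final estimate.

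\emph{The gap.} The $C^{1,\alpha}$ estimates of Caffarelli, \'Swi\c{e}ch, and Winter that you want to cite are proved under a VMO-type smallness assumption on $\sigma_F(x,x_0)=\sup_X|F(X,0,0,x)-F(X,0,0,x_0)|/(\|X\|+1)$. The hypothesis of Theorem~\ref{thm41}, however, only controls $\sigma_{F^{\star}}$, not $\sigma_F$, and these are not comparable in general. This is precisely the point that your ``Main obstacle'' paragraph raises and then defers. It is not a peripheral technicality: it is the content of the step. The paper resolves it by \emph{not} black-boxing the $C^{1,\alpha}$ estimate — instead it re-runs the pointwise affine-approximation argument of \cite[Theorem~3.1]{MR2486925} with the classical freezing step replaced by the recession approximation Lemma~\ref{asyconapp}, which is exactly tailored to transfer smallness of $\sigma_{F^{\star}}$ (together with smallness of $\mu$) into closeness to a convex, $x$-frozen limiting problem. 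Without carrying out this adaptation (for instance by reproducing the iteration at scale $\rho^k$, noting that the rescaled operator is $F_{\mu}$ with $\mu=\rho^{(1-\alpha)k}\to 0$ and comparing to $F^{\star}$ via Lemma~\ref{asyconapp}), the invocation of ``Caffarelli--C\'ordoba--Silvestre--\'Swi\c{e}ch--Winter type'' regularity in Step~2 is not justified under the stated hypotheses, and the proof is incomplete.

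\emph{Smaller issues.} Your Step~1 introduces $\mu$ and $F_\mu$, which is the correct setup for the Hessian estimate of Theorem~\ref{main_ac}, but the paper's proof of the gradient estimate uses the plain normalization $\tilde u=u/\kappa$ without any $\mu$-rescaling of the Hessian variable; the $\mu$ in the gradient argument appears only inside the $C^{1,\alpha}$ iteration (where it is produced by the scaling, not chosen a priori). Also note the paper takes the auxiliary exponent $p'\in(n,np)$ strictly, which is needed so that $\tilde\psi(t,x)=\psi(t^{1/p'},x)$ still satisfies $(\mathrm{aInc})_{\gamma}$ for some $\gamma>1$ and the maximal operator theorem applies; your choice $p'=np$ would make this boundary.
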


\begin{proof}  We only give the proof of (b) since (a) can also be derived in a similar way.  
Consider the following renormalization
$$ \tilde{u}(x)=\frac{u(x)}{||u||^n_{ L^{\infty}(B_{1}^{+})}+||f||_{L^{\psi(\cdot)}(B_{1}^{+})}  } .$$
Then we can assume that $||u||_{L^{\infty}(B_{1}^{+})},||f||_{ L^{\psi(\cdot)}(B_{1}^{+})} \le 1$.
Now we need to show that $$||Du||_{ L^{\psi(\cdot)}(B_{1/2}^{+})}  \le C$$
for some $C=C(n, \lambda, \Lambda, p,q,L,t_{0},b,c)>0$.

Let $p' \in (n, np)$, $d = \min \{ \frac{1}{2}, r_{0} \}$ and fix $y \in B_{1/2}^{+} \cup T_{1/2}$.
By using the similar argument in  \cite[Theorem 3.1]{MR2486925}, we can find an affine function $l$ such that
\begin{align*}
|l(0)|, |Dl(0)| \le CH(y) \quad \textrm{and}
\\ ||u-l||_{L^{\infty}(B_{r}(y)\cap \mathbb{R}_{+}^{n})} \le Cr^{1+\alpha}H(y)
\end{align*}
for every $y \in B_{1/2}^{+}$ and some $\alpha \in (0,1)$, $H$ is given by
\begin{align*}
H(y) = ||u||^n_{L^{\infty}(B_{d}^+(y))} + \beta_{0}^{-1}\sup_{r \le d}r^{1-\alpha}\bigg(
\kint_{B^+_{r}(y)} |f(y)|^{p'}\bigg)^{\frac{1}{p'}},
\end{align*} 
where $\beta_{0}>0$ is a small constant related to the constant $\delta>0$ in Lemma \ref{asyconapp} (for details on how to choose $\beta_0$, see, for example, the proof of \cite[Theorem 3.1]{MR2486925}).
Then we have
$$ \frac{|u(x+y)-u(y)|}{|x|} \le CH(y) $$
for almost every $y \in B_{1/2}^{+}$ and $x \in B_{\sigma}$ with small enough $\sigma$. 

Now we observe that
\begin{align*}
&\int_{B_{1/2}^{+}} \psi \bigg( \frac{|u(y+x)-u(y)|}{|x|},y\bigg) dy \\ & \le
\int_{B_{1/2}^{+}} \psi ( CH(y),y) dy
\\ & \le C(n,q)\int_{B_{1/2}^{+}} \psi ( H(y),y) dy
\\ & \le C(n,q)\int_{B_{1/2}^{+}} \hspace{-0.5em} \psi \bigg( ||u||^n_{L^{\infty}(B_{d}^+(y)} \hspace{-0.1em}+\hspace{-0.1em} \beta_{0}^{-1}\sup_{r \le d}r^{1-\alpha}\bigg(
\kint_{B_{d}^+(y)}\hspace{-1em} |f(x)|^{p'}dx\bigg)^{\frac{1}{p'}}\hspace{-0.5em} ,y \bigg) dy
\\ & \le C(n,q, t_{0})  \bigg\{ 1+ \int_{B_{1/2}^{+}}\psi \bigg(\beta_{0}^{-1}\sup_{r \le d}r^{1-\alpha}\bigg(
\kint_{B_{d}^+(y)}\hspace{-1em} |f(x)|^{p'}dx\bigg)^{\frac{1}{p'}},y \bigg) dy \bigg\}
\\ & \le C(n,q,t_{0},\zeta_{0})\int_{B_{1/2}^{+}} \psi \big( [\mathcal{M}(|f|^{p'})(x)]^{\frac{1}{p'}} , y \big) dy .
\end{align*}

Set $\tilde{\psi}(t,x) = \psi(t^{\frac{1}{p'}},x)$.
Then
\begin{align*}
\int_{B_{1/2}^{+}} \psi \big( [\mathcal{M}(|f|^{p'})(y)]^{\frac{1}{p'}} , y \big) dy=
\int_{B_{1/2}^{+}} \tilde{\psi} \big(  \mathcal{M}(|f|^{p'})(y), y \big) dy.
\end{align*}
By \cite[Theorem 4.3.4]{MR3931352}, the maximal operator $\mathcal{M}$ is bounded in $L^{\tilde{\psi}(\cdot)}$.
Hence, we obtain that
\begin{align*}
||\mathcal{M}(|f|^{p'})||_{L^{\tilde{\psi}(\cdot)}(B_{1/2}^{+})}
\le C |||f|^{p'}||_{L^{\tilde{\psi}(\cdot)}(B_{1/2}^{+})}
\le C ||f||_{L^{\psi(\cdot)}(B_{1/2}^{+})} \le C
\end{align*}
for some $C=C(n,p,q, t_{0}, L)$,
because we assumed that $||f||_{L^{\psi(\cdot)}(B_{1/2}^{+})} \le1$.
Applying \cite[Lemma 3.2.3]{MR3931352}, it follows that
$$ \int_{B_{1/2}^{+}} \tilde{\psi} (\mathcal{M}(|f|^{p'})(y),y) dy \le C.$$
Since $\tilde{\psi}(t,x) = \psi(t^{\frac{1}{p'}},x)$, we have
$$ \int_{B_{1/2}^{+}} \psi \big([\mathcal{M}(|f|^{p'})(y)]^{\frac{1}{p'}},y\big) dy \le C.$$
This completes the proof.
\end{proof}

Hence, using the observation at the beginning of this section together with Theorems \ref{main_ac} and Theorem \ref{thm41}, and exploiting the scaling property, we obtain the following Hessian estimates:
\begin{theorem} \label{thm42} Under the assumption of Theorem \ref{thm41}, the following hold:
\begin{itemize}
\item[(a)]Suppose that $F=F(X,\xi,z,x)$ satisfies  \eqref{ob_sc} in $B_{1}$, $f \in L^{\psi(\cdot)}(B_{1})$ and there exists $F^{\star}$ satisfying \eqref{fstar} which is convex in $X$.
Then there exists a small constant $\delta$ depending on $n, \lambda, \Lambda, p,q,L$ and $t_{0}$ such that
$$\bigg( \kint_{B_{r}(x_{0})\cap B_{1}} \sigma_{F^{\star}}(x_{0},x)^{n} dx  \bigg)^{\frac{1}{n}} \le \delta $$
for any $r>0$ and $x_{0} \in B_{1}$ implies that for any viscosity solution $u$ of
$$ F(D^{2}u,Du,u,x)=f \quad \textrm{in } B_{1},$$
we have $u \in W^{2, \psi(\cdot)}(B_{1/2})$ with the estimate
$$ ||u||_{ W^{2, \psi(\cdot)}(B_{1/2})} \le C(||u||_{ L^{\infty}(B_{1})}^n+||f||_{ L^{\psi(\cdot)}(B_{1})})$$
for some $C=C(n, \lambda, \Lambda, p,q,L,t_{0},b,c)>0$.
\item[(b)]
Suppose that $F=F(X,\xi,z,x)$ satisfies  \eqref{ob_sc} in $B_{1}^{+}$, $f \in L^{\psi(\cdot)}(B_{1}^{+})$ and there exists $F^{\star}$ satisfying \eqref{fstar} which is convex in $X$.
Then there exists a small constant $\delta>0$ depending on $n, \lambda, \Lambda, p,q,L$
and $t_{0}$ such that
$$\bigg( \kint_{B_{r}(x_{0})\cap B_{1}^{+}} \sigma_{F^{\star}}(x_{0},x)^{n} dx  \bigg)^{\frac{1}{n}} \le \delta $$
for any $r>0$ and $x_{0} \in B_{1}^{+}$ implies that for any viscosity solution $u$ of
\begin{align*}
\left\{ \begin{array}{ll}
F(D^{2}u,Du,u,x) = f & \textrm{in $B_{1}^{+}, $}\\
u=0 & \textrm{on $T_{1},$}
\end{array} \right.
\end{align*}
we have $u \in W^{2, \psi(\cdot)}(B_{1/2}^{+})$ with the estimate
$$ ||u||_{ W^{2, \psi(\cdot)}(B_{1/2}^{+})} \le C(||u||_{ L^{\infty}(B_{1}^{+})}^n+||f||_{ L^{\psi(\cdot)}(B_{1}^{+})})$$
for some $C=C(n, \lambda, \Lambda, p,q, L,t_{0},b,c)>0$.
\end{itemize}
\end{theorem}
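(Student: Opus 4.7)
\textbf{Proof proposal for Theorem \ref{thm42}.} The plan is to reduce the full problem to Theorem \ref{main_ac} by freezing the lower-order terms at zero and absorbing the error into the right-hand side, using Theorem \ref{thm41} to control the gradient and the solution itself. I will treat only case (b), since (a) is entirely analogous.

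First I would apply Theorem \ref{thm41}(b) to obtain the $W^{1,\psi(\cdot)}$-bound
$$||u||_{W^{1,\psi(\cdot)}(B_{1/2}^{+})} \le C(||u||_{L^{\infty}(B_{1}^{+})}^{n} + ||f||_{L^{\psi(\cdot)}(B_{1}^{+})}).$$
Next, define the frozen operator $\widehat{F}(X,x):=F(X,0,0,x)$. Then $u$ is, at every point of $B_{1}^{+}$, a viscosity solution of
$$\widehat{F}(D^{2}u,x) \;=\; \tilde f \quad\text{in } B_{1}^{+}, \qquad u=0 \text{ on } T_{1},$$
where $\tilde f := f-[F(D^{2}u,Du,u,\cdot)-F(D^{2}u,0,0,\cdot)]$. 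The structure condition \eqref{ob_sc} applied with the pair $(D^{2}u,Du,u)$ and $(D^{2}u,0,0)$ gives $|\tilde f| \le |f|+b|Du|+c|u|$, so by the observation at the start of Section 4 and the above gradient estimate,
$$||\tilde f||_{L^{\psi(\cdot)}(B_{1/2}^{+})} \le C(||u||_{L^{\infty}(B_{1}^{+})}^{n} + ||f||_{L^{\psi(\cdot)}(B_{1}^{+})}).$$

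I then verify that $\widehat{F}$ meets the hypotheses of Theorem \ref{main_ac}: it is $(\lambda,\Lambda)$-uniformly elliptic by \eqref{ob_sc}, satisfies $\widehat{F}(0,\cdot)=F(0,0,0,\cdot)=0$, and its recession $\widehat{F}^{\star}$ coincides with $F^{\star}(\cdot,0,0,\cdot)$. Indeed, from \eqref{ob_sc} one has $|\mu F(\mu^{-1}X,\xi,z,x)-\mu F(\mu^{-1}X,0,0,x)| \le \mu(b|\xi|+c|z|)\to 0$ as $\mu\to 0^{+}$, so the lower-order terms drop out in the recession limit and the convexity of $F^{\star}$ in $X$ transfers to $\widehat{F}^{\star}$. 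Since $\sigma_{\widehat{F}^{\star}}=\sigma_{F^{\star}(\cdot,0,0,\cdot)}$ and the smallness hypothesis on $\sigma_{F^{\star}}$ is assumed uniformly in $r>0$ and $x_{0}$, the same smallness condition holds for $\widehat F^{\star}$ on every ball.

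Finally, I would apply Theorem \ref{main_ac}(b) to $\widehat F$ after a routine scaling: for any $y\in B_{1/2}^{+}\cup T_{1/2}$ pick $\rho>0$ so that $B_{6\rho}^{+}(y)\subset B_{1}^{+}$, set $v(z)=\rho^{-2}u(y+\rho z)$ and $\widehat F_{\rho}(X,z)=\widehat F(X,y+\rho z)$ on $B_{6}^{+}$; then $v$ solves $\widehat F_{\rho}(D^{2}v,z)=\rho^{-2}\tilde f(y+\rho\cdot)$ with $v=0$ on $T_{6}$, and all structural hypotheses (ellipticity, convexity of the recession, uniform smallness of $\sigma_{(\widehat F_{\rho})^{\star}}$) are preserved. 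Theorem \ref{main_ac}(b) then gives $D^{2}v\in L^{\psi(\cdot)}(B_{1}^{+})$ with the corresponding estimate; scaling back yields a Hessian bound on $B_{\rho}^{+}(y)$. A standard finite covering of $B_{1/2}^{+}$ by half-balls of this type (adjusted to interior balls where $y\in B_{1/2}^{+}\setminus T_{1/2}$, using Theorem \ref{main_ac}(a)), combined with the gradient estimate above, produces
$$||u||_{W^{2,\psi(\cdot)}(B_{1/2}^{+})} \le C(||u||_{L^{\infty}(B_{1}^{+})}^{n}+||f||_{L^{\psi(\cdot)}(B_{1}^{+})}).$$

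The main technical obstacle I expect is checking that the rescaling is compatible with the generalized Orlicz framework: specifically, that the rescaled $\psi$ and the rescaled weak $\Phi$-function $\varphi$ still satisfy (A0), (A1-$\varphi^{-}$), $(\text{aInc})_{p}$ and $(\text{aDec})_{q}$ with constants depending only on the original ones, and that the norm of $\tilde f$ on the rescaled domain can be controlled via the unit ball property (\cite[Lemma 3.2.3]{MR3931352}). Once this bookkeeping is done, everything else reduces to applying the two theorems already established.
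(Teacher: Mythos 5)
Your proposal follows essentially the same route as the paper, which proves Theorem~\ref{thm42} in a single sentence by combining the freezing-of-lower-order-terms observation at the start of Section~4, the Hessian estimate of Theorem~\ref{main_ac}, the gradient estimate of Theorem~\ref{thm41}, and a scaling/covering argument; your write-up simply fills in the details the paper leaves implicit. One small slip in the scaling step: if $v(z)=\rho^{-2}u(y+\rho z)$, then $D^{2}v(z)=D^{2}u(y+\rho z)$, so $v$ solves $\widehat{F}_{\rho}(D^{2}v,z)=\tilde f(y+\rho z)$ with \emph{no} factor of $\rho^{-2}$ on the right-hand side (that factor is exactly absorbed by the two spatial derivatives); this is harmless to the argument but should be corrected.
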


Now we can give the proof of our main theorem.
\begin{proof}[Proof of Theorem \ref{gor_main}] 

Since we assumed $ \partial \Omega $ has the $C^{1,1}-$regularity, there exists a neighborhood $U(x_{0}) $ and a $ C^{1,1}$-diffeomorphism
$$ \Psi : U(x_{0}) \to B_{1}  \cap \mathbb{R}_{+}^{n} $$
such that $ \Psi(x_{0}) = 0$  for each $ x_{0} \in \partial \Omega $.
Set  $ \tilde{u} =  u \circ \Psi^{-1}$. Then $\tilde{u} $ is a viscosity solution of
 \begin{align*}
\left\{ \begin{array}{ll}
\tilde{F}(D^{2}\tilde{u},D\tilde{u},\tilde{u},x)=\tilde{f} & \textrm{in $B_{1}^{+}$,}\\
\tilde{u} = 0 & \textrm{on $T_{1}$,}\\
\end{array} \right.
\end{align*}
where $ \tilde{f} = f \circ \Psi^{-1}$ and
\begin{align*}
\tilde{F}(D^{2}\tilde{\phi},D\tilde{\phi},\tilde{u},x) &
=F (D^{2}\phi, D\phi, u ,x) \circ \Psi^{-1} \\ &
= F(D \Psi^{T} \circ \Psi^{-1} D^{2} \tilde{\phi} D\Psi \circ \Psi^{-1} + (D \tilde{\phi} \partial_{i,j} \Psi \circ \Psi^{-1})_{1\le i,j \le n} , \\ & \qquad  \qquad
D\phi D \Psi \circ \Psi^{-1}, \tilde{u} , \Psi^{-1}(x))
\end{align*}
for $ \tilde{\phi} \in W^{2,p}(B_{1}^{+} )$ and $ \phi = \tilde{\phi} \circ \Psi ( \in W^{2,p}(U(x_{0})))$.
We can check that $\tilde{F}$ is uniformly elliptic with constants $C_{0}\lambda  $, $C_{0} \Lambda $
for a constant $ C_{0}= C_{0}(\Psi)$, and
$$ \sigma_{\tilde{F}}(x,x_{0}) \le C(\Psi) \sigma_{F}(\Psi^{-1}(x),\Psi^{-1}(x_{0}))$$
(see \cite{MR2486925}).
By using a covering argument with Theorem \ref{thm42}, we obtain that a viscosity solution $u$ of \eqref{ob_acori} satisfies
$$ ||u||_{ W^{2, \psi(\cdot)}(\Omega)} \le C(||u||_{ L^{\infty}(\Omega)}^n+||f||_{ L^{\psi(\cdot)}(\Omega)})$$
for some $C=C(n, \lambda, \Lambda, p,q ,L,t_{0},b,c)>0 $.

Now we need to eliminate the term $||u||_{ L^{\infty}(\Omega)}^n $ in the above estimate.
To this end, we use a similar argument to the proof of \cite[Theorem 2.6]{MR3390010}.
Suppose that the estimate \eqref{est_main} is not true.
Then we can choose sequences $\{u_k\}_{k=1}^{\infty}$ and $\{f_k\}_{k=1}^{\infty}$ so that
$u_k$ is a viscosity solution to
\begin{align*} 
\left\{ \begin{array}{ll}
F(D^{2}u_k,Du_k,u_k,x) = f_k & \textrm{in $\Omega, $}\\
u_k=0 & \textrm{on $\partial \Omega$}
\end{array} \right.
\end{align*}
with
\begin{align}\label{adco1}
    ||u_k||_{ W^{2, \psi(\cdot)}(\Omega)} > k||f_k||_{ L^{ \psi(\cdot)}(\Omega)} 
\end{align}
for any $k\ge1$.
Assume that $||u_k||_{ W^{2, \psi(\cdot)}(\Omega)}=1 $ without loss of generality.
Then it follows that
\begin{align}\label{adco2}
  ||f_k||_{ L^{ \psi(\cdot)}(\Omega)} <   \frac{1}{k}
\end{align}
and the right-hand side term tends to zero as $k\to\infty$.
On the other hand, we also see that $u_k$ converges weakly to a function $v\in  W^{2, \psi(\cdot)}(\Omega)$ in $W^{2, \psi(\cdot)}(\Omega) $ by passing to a subsequence.

Now observe that $W^{2, \psi(\cdot)}(\Omega)\subset W^{2, \psi^+}(\Omega) $.
Since $  W^{2, \psi^+}(\Omega)$ is continuously embedded in $W^{2,nq}(\Omega)$ and 
$W^{2,nq}(\Omega) $ is compactly embedded in $C(\Omega)$ due to $nq>\frac{n}{2}$,
we obtain that  $W^{2, \psi(\cdot)}(\Omega) $ is also compactly embedded in $C(\Omega)$.
This yields that $u_k$ converges strongly to $v$ in $C(\Omega)$.
In addition, by \cite[Proposition 1.5]{MR2486925}, $v$ is a viscosity solution of
\begin{align}\label{homo}
\left\{ \begin{array}{ll}
F(D^{2}v,Dv,v,x) = 0 & \textrm{in $\Omega, $}\\
v=0 & \textrm{on $\partial \Omega.$}
\end{array} \right.
\end{align}
Then, from the uniqueness of solutions to \eqref{homo} (see \cite[Theorem 2.10]{MR1376656}), we have $v\equiv 0$.
It follows that
$$1= ||u_k||_{ W^{2, \psi(\cdot)}(\Omega)}\le C(||u_k||_{ L^{\infty}(\Omega)}^n+||f_k||_{ L^{\psi(\cdot)}(\Omega)}),$$
and the right-hand side term converges to zero.
But this is a contradiction, and hence we can complete the proof.
\end{proof}


\end{document}